\begin{document}

\theoremstyle{plain}
\newtheorem{theorem}{Theorem}[section]
\newtheorem{conditionM}{Condition M.\hspace{-0.13cm}}
\newtheorem{conditionC}{Condition C.\hspace{-0.13cm}}
\newtheorem{condition}{Condition}
\newtheorem{lemma}[theorem]{Lemma}
\newtheorem{proposition}[theorem]{Proposition}
\newtheorem{corollary}[theorem]{Corollary}
\newtheorem{claim}[theorem]{Claim}
\newtheorem{definition}[theorem]{Definition}
\newtheorem{Ass}[theorem]{Assumption}
\newcommand{\q}{\mathbf{Q}}
\theoremstyle{definition}
\newtheorem{remark}[theorem]{Remark}
\newtheorem{note}[theorem]{Note}
\newtheorem{example}[theorem]{Example}
\newtheorem{assumption}[theorem]{Assumption}
\newtheorem{SA}[theorem]{Standing Assumption}
\newtheorem*{notation}{Notation}
\newtheorem*{assuL}{Assumption ($\mathbb{L}$)}
\newtheorem*{assuAC}{Assumption ($\mathbb{AC}$)}
\newtheorem*{assuEM}{Assumption ($\mathbb{EM}$)}
\newtheorem*{assuES}{Assumption ($\mathbb{ES}$)}
\newtheorem*{assuM}{Assumption ($\mathbb{M}$)}
\newtheorem*{assuMM}{Assumption ($\mathbb{M}'$)}
\newtheorem*{assuL1}{Assumption ($\mathbb{L}1$)}
\newtheorem*{assuL2}{Assumption ($\mathbb{L}2$)}
\newtheorem*{assuL3}{Assumption ($\mathbb{L}3$)}
\newtheorem{charact}[theorem]{Characterization}
\newtheorem*{IoP}{Idea of Proof}
\newcommand{\notiz}{\textup} 
\newtheorem{Case}{Case}
\newcommand{\lebesgue}{\ensuremath{\lambda\!\!\lambda}}
\renewenvironment{proof}{{\parindent 0pt \it{ Proof:}}}{\mbox{}\hfill\mbox{$\Box\hspace{-0.5mm}$}\vskip 16pt}
\newenvironment{proofthm}[1]{{\parindent 0pt \it Proof of Theorem #1:}}{\mbox{}\hfill\mbox{$\Box\hspace{-0.5mm}$}\vskip 16pt}
\newenvironment{prooflemma}[1]{{\parindent 0pt \it Proof of Lemma #1:}}{\mbox{}\hfill\mbox{$\Box\hspace{-0.5mm}$}\vskip 16pt}
\newenvironment{proofcor}[1]{{\parindent 0pt \it Proof of Corollary #1:}}{\mbox{}\hfill\mbox{$\Box\hspace{-0.5mm}$}\vskip 16pt}
\newenvironment{proofprop}[1]{{\parindent 0pt \it Proof of Proposition #1:}}{\mbox{}\hfill\mbox{$\Box\hspace{-0.5mm}$}\vskip 16pt}

\let\MID\mid
\renewcommand{\mid}{|}

\let\SETMINUS\setminus
\renewcommand{\setminus}{\backslash}

\def\stackrelboth#1#2#3{\mathrel{\mathop{#2}\limits^{#1}_{#3}}}

\renewcommand{\theequation}{\thesection.\arabic{equation}}
\numberwithin{equation}{section}

\newcommand{\pH}{\p^{\mathscr{H}^1}}
\newcommand{\1}{\mathbf{1}}
\newcommand{\of}{[\![}
\newcommand{\gs}{]\!]}
\newcommand{\cadlag}{c\`adl\`ag }
\newcommand{\BF}{B^{X, \F}(h)}
\newcommand{\CF}{C^{X, \F}}
\newcommand{\nuF}{\nu^{X, \F}}
\newcommand{\BG}{B^{X, \G}(h)}
\newcommand{\CG}{C^{X, \G}}
\newcommand{\nuG}{\nu^{X, \G}}
\newcommand{\BK}{B^{X, \mathbf{K}}(h)}
\newcommand{\CK}{C^{X, \mathbf{K}}}
\newcommand{\nuK}{\nu^{X, \mathbf{K}}}
\newcommand{\cf}{c^{X, \F}}
\newcommand{\G}{\mathsf{G}}
\newcommand{\dd}{\operatorname{d}\hspace{-0.058cm}}
\newcommand{\E}{E}
\newcommand{\p}{P}
\newcommand{\pK}{\p^{\mathscr{H}^2}}
\newcommand{\Lip}{C^{1, 1}(\mathbb{R}^d, \mathbb{R})}

\title[Stochastic Orders for PIIs]{Monotone and Convex Stochastic Orders for \\Processes with Independent Increments}

\author[D. Criens]{David Criens}
\address{D. Criens - Technical University of Munich, Department of Mathematics, Germany}
\email{david.criens@tum.de}

\keywords{monotone stochastic order,
convex stochastic order,
coupling,
independent increments,
conditionally independent increments,
semimartingale,
L\'evy process}

\subjclass[2010]{60G51, 60G44}

 \thanks{The author thanks Noam Berger, Jean Jacod and Stefan Junk for valuable and fruitful discussions. Moreover, he is grateful to Ernst Eberlein and Kathrin Glau for bringing the topic to his attention.}
\thanks{D. Criens - Technical University of Munich, Department of Mathematics, Germany, \texttt{david.criens@tum.de}.}

\frenchspacing
\pagestyle{myheadings}

\begin{abstract}
We study monotone and convex stochastic orders for processes with independent increments. 
Our contributions are twofold: First, we relate stochastic orders of the Poisson component to orders of their (generalized) L\'evy measures. 
The relation is proven using an interpolation formula for infinitely divisible laws.
Second, we derive explicit conditions on the characteristics of the processes. 
In this case, we prove the conditions via constructions of couplings. 
\end{abstract}
\date{\today}
\maketitle
\section{Introduction}



In this article we study monotone and convex stochastic orders for processes with independent increments (PIIs).
The law of a PII can be described by a deterministic triplet, called the \emph{characteristics}, which has a similar structure as a L\'evy-Khinchine triplet corresponding to a L\'evy process. 
The first characteristic represents the drift, the second characteristic encodes the Gaussian component and the third characteristic measures the frequency of jumps.
Our goal is to give conditions for stochastic orders in terms of the characteristics of PIIs.

Let us explain our main ideas. We start with the observation 
that PIIs can be decomposed into two independent parts: A quasi-left continuous PII and a sum of independent random variables which represents the fixed times of discontinuity.
By the independence of the parts, for monotone and convex stochastic orders it suffices to order both parts individually. 
The fixed times of discontinuities can be ordered by ordering each summand. Hence, our main focus lies on the quasi-left continuous parts. 

In this regard, our discussion is divided into two parts.
In the first one, we decompose the (quasi-left continuous) PIIs further into a Gaussian and a Poisson component. Again, it suffices to order each of them separately. 
In the case of the Gaussian parts, conditions for finite-dimensional stochastic orders are well-studied. Thus, we restrict our discussion to the Poisson parts, for which we show that ordering the third characteristics implies finite-dimensional stochastic orders. 
In the L\'evy case, the Poisson parts are ordered if, and only if, the L\'evy measures are ordered.
The main tool in our proof is an \emph{interpolation formula} for infinitely divisible laws in the spirit of the formulas studied in \cite{houdre2002, Houdre1998}. 

In the second part, we are interested in conditions which can be read immediately from the characteristics of the PIIs. 

For the monotone stochastic order, we first give a \emph{majorization condition}: The PIIs satisfy a drift condition, have the same Gaussian components and their jump frequencies are ordered in the sense that the negative jumps of the stochastically smaller process dominate the negative jumps of the stochastically larger process and vise versa for the positive jumps.
Instead of deducing the result from our previous results, we present an alternative proof. The main idea is to couple the processes via so-called It\^o maps, which relate L\'evy measures to a reference L\'evy measure.

The alternative proof brings additional aspects to the table: First, it shows that the conditions imply a pathwise order instead of a finite-dimensional one.
Second, the It\^o maps imply an easy sufficient and necessary condition for the monotone ordering of the third characteristics, which can be considered as a generalization of the ordering of survival functions.
Third, the proof illustrates the relation between the conditions and their intuitive interpretations via the characteristics of the processes.

We also give \emph{cut criteria}, which allow the majorization of the frequencies of jumps to change once. 
In this case, we present a third alternative proof based on another coupling, which is built using the interpretation of the characteristics. 

For the convex stochastic order, we also give a \emph{majorization condition}: The expectations and the covariance functions of the PIIs are ordered and the stochastically larger PII has a higher jump frequency than the stochastically smaller PII.
For this condition we present a short proof, which applies to all PIIs with finite first moments. 
It uses the observation that the stochastically larger PII can be decomposed in law into the stochastically smaller PII and a PII with non-negative expectation such that both are independent.
Now, as in Strassen's theorem, the convex order follows by Jensen's inequality.

Comparison results for L\'evy processes and PIIs with absolutely continuous characteristics were obtained by Bergenthum and R\"uschen\-dorf \cite{bergenthum2007, bergenthum2007b} and B\"auerle, Blatter and M\"uller \cite{Bauerle2007}.
The main idea in \cite{bergenthum2007} is to
start with two compound Poisson processes with the same jump intensity and to observe that these processes can be compared by (stochastically) ordering the jump size distribution. 
By putting mass into the origin, the case of compound Poisson processes with different jump intensities can be reduced to the case with equal jump intensities. Approximation arguments yield conditions for L\'evy processes with infinite activity. 

We show that the results obtained in \cite{bergenthum2007} for compound Poisson processes with equal jump intensity hold for more general PIIs without modifying the characteristics. 
Moreover, our explicit conditions improve several results in \cite{bergenthum2007b} by showing that parts of the conditions are not necessary.

The focus in \cite{Bauerle2007} lies on the supermodular stochastic order, which is not studied in this article. 
As our first part, the proofs are based on an interpolation formula from \cite{houdre2002}, which applies to functions in \(C^2_b\).
Since the supermodular stochastic order is generated by the supermodular functions in \(C^2_b\), the interpolation formula in \cite{houdre2002} can be applied directly.
The convex stochastic order, however, is not generated by bounded functions and we 
have to generalize the interpolation formula to Lipschitz continuous functions.


This article is structured as follows.
In Section \ref{Stochastic Order and Processes with Independent Increments} we recall the concepts of PIIs and stochastic orders. 
In Section \ref{sec: main} we state and prove our general conditions and in Section \ref{sec: M} we present our majorization conditions and cut criteria together with the corresponding coupling arguments.
In Section~\ref{sec: appl} we discuss how to generalize our conditions to semimartingales with conditionally independent increments and we give examples.

Let us end the introduction with a short remark on notation: For all non-explained notation we refer the reader to \cite[Chapters I and II]{JS}.

\section{Stochastic Orders and PIIs}\label{Stochastic Order and Processes with Independent Increments}
In this section we introduce the two main objects in this article: Processes with independent increments and stochastic orders.
We fix some \(d \in \mathbb{N}\).
\begin{definition}
	An \(\mathbb{R}^d\)-valued \cadlag adapted stochastic process \(X\) on the filtered probability space \((\Omega, \mathscr{F}, (\mathscr{F}_t)_{t \in [0, \infty)}, \p)\) is called PII, if \(X_0 = 0\) and \(X_t - X_s\) is independent of \(\mathscr{F}_s\) for all \(s \in [0, t]\) and \(t \in [0, \infty)\).
\end{definition}
Stochastic ordering is a concept depending only on probability measures. Since the law of a PII can be described by a deterministic triplet, the filtration \((\mathscr{F}_t)_{t \in [0, \infty)}\) is no active player in this article. We formalizes this:
Let \(h\) be a fixed truncation function.
As stated in \cite[Theorem II.5.2]{JS}, laws of PIIs have a one-to-one correspondence to a deterministic triplet \((B, C, \nu)\), called the \emph{characteristics}, consisting of the following:
\begin{enumerate}
	\item[\textup{(i)}]
	\(B\colon [0, \infty) \to \mathbb{R}^d\) is \cadlag with~\(B(h)_0= 0\).
	\item[\textup{(ii)}]
	\(C \colon [0, \infty) \to \mathbb{R}^d \otimes\mathbb{R}^d\) is continuous with \(C_0 = 0\), such that \(C_t - C_s\) is non-negative definite for all \(0 \leq s < t\).
	\item[\textup{(iii)}]
	\(\nu\) is a \(\sigma\)-finite measure on \(([0, \infty) \times \mathbb{R}^d, \mathscr{B}([0, \infty)) \otimes \mathscr{B}(\mathbb{R}^d))\).
\end{enumerate}
Providing an intuition, \(B\) represents to the drift, \(C\) encodes the Gaussian component and \(\nu\) encodes the Poisson component.

We define 
\begin{align*}
\mathcal{F}^m_{st} &\triangleq \{f \colon \mathbb{R}^{d\cdot m} \to \mathbb{R}, f \textup{ Borel and increasing}\},\\
\mathcal{F}^m_{cx} &\triangleq \{f\colon \mathbb{R}^{d\cdot m} \to \mathbb{R}, f \textup{ convex}\},\\
\mathcal{F}^m_{icx} &\triangleq \{f \colon \mathbb{R}^{d \cdot m} \to \mathbb{R}, f \textup{ increasing and convex}\}.
\end{align*}
Here, a function \(f\colon \mathbb{R}^{d \cdot m} \to \mathbb{R}\) is increasing if \(f(x) \leq f(y)\) whenever \(x \leq y\), which means \(x_i \leq y_i\) for all \(i \leq d \cdot m\).

For two PIIs \(X\) and \(Y\), we write \(X \preceq_\bullet Y\) if for all \(m \in \mathbb{N}\), \(0 \leq t_1 < t_2 < ... < t_m < \infty\) and \(f \in \mathcal{F}^m_\bullet\) it holds that 
\begin{align}\label{so}
E[f(X_{t_1}, ..., X_{t_m})] \leq E[f(Y_{t_1}, ..., Y_{t_m})],
\end{align}
whenever the integrals are well-defined.
Needless to say that \(X\) and \(Y\) may defined on different probability spaces and that \eqref{so} is a property of the laws of \(X\) and \(Y\).

We denote by \(\mathbb{D}\) the space of all \cadlag functions \([0, \infty)\to \mathbb{R}\) and equip it with the Skorokhod topology. 
In the one-dimensional case \(d = 1\) we also consider the following class:
\[
\mathcal{F}_{pst} \triangleq \{f \colon \mathbb{D} \to \mathbb{R}, f \text{ Borel and } f(\alpha) \leq f(\omega) \text{ if } \alpha_t \leq \omega_t \text{ for all } t \in [0, \infty)\}.
\]
We use \(p\) as an acronym for \emph{pathwise}. 
In this case, we write \(X \preceq_{pst} Y\) if \[E[f(X)] \leq E[f(Y)] \text{ for all } f \in \mathcal{F}_{pst}.\]

\section{Stochastic Orders for PIIs}\label{sec: main}
\subsection{Decomposition of PIIs}
In this section we show that a PII \(X\) with characteristics \((B^X, C^X, \nu^X)\) can be decomposed into a quasi-left continuous PII \(X^{qlc}\) and a sum of independent random variables \(X^{ftd}\) such that \(X^{qlc}\) and \(X^{ftd}\) are independent.
We assume that \(|h(x)|\1_{J^X} \star \nu^X_t < \infty\) for all \(t \in [0, \infty)\), where
\(
J^X \triangleq \{t \in [0, \infty)\colon \nu^X(\{t\}\times \mathbb{R}^d) > 0\}.
\)
Moreover, we set
\begin{align*}
\nu^{X, qlc} (\dd t \times \dd x) &\triangleq \1_{\complement J^X}(t) \nu^X(\dd t \times \dd x),\\B^{X, qlc} &\triangleq B^X - h(x)\1_{J^X} \star \nu^X.\end{align*} For a Borel function \(f\colon [0, \infty) \times \mathbb{R}^d \to [0, \infty)\) we write \[\left(f \cdot \nu^X\right)(\dd t \times \dd x) \triangleq f(t, x) \nu^X(\dd t \times \dd x).\]
Following Jacod and Shiryaev \cite{JS}, we denote the Dirac measure by \(\varepsilon\).
\begin{lemma}\label{lem: deco PII}
	The process \(X^{ftd} \triangleq x\1_{J^X} \star \mu^X\) is a.s. well-defined as a sum of independent random variables \((\Delta X_s)_{s \in J^X}\) such that 
	\begin{align}\label{law jump}
	P(\Delta X_s \in \dd x) = \nu^X(\{s\} \times \dd x) + \left(1 - \nu^X\left(\{s\}\times \mathbb{R}^d\right) \right) \varepsilon_0(\dd x).
	\end{align} 
	Moreover, 
	\(
	X^{qlc} \triangleq X - X^{ftd}
	\)
	is a quasi-left continuous PII with characteristics \((B^{X, qlc}, C^X, \nu^{X, qlc})\) and \(X^{ftd}\) and \(X^{qlc}\) are independent.
\end{lemma}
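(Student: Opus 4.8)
The plan is to read all assertions off the canonical semimartingale decomposition of $X$ together with the structural description of PIIs in \cite[Chapter~II]{JS}. With respect to the fixed truncation function $h$ I write $X = B^X + X^c + h(x)\star(\mu^X-\nu^X) + (x - h(x))\star\mu^X$, where $X^c$ is the continuous local martingale part and $\mu^X$ is the jump measure with deterministic compensator $\nu^X$. I would first recall the standard facts on PII characteristics: $(|x|^2\wedge 1)\star\nu^X_t<\infty$, $\nu^X$ does not charge $[0,\infty)\times\{0\}$, and for every fixed $s$ the jump $\Delta X_s$ has law \eqref{law jump}, so that $\Delta X_s = 0$ a.s.\ for $s\notin J^X$. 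Since every nonzero term of $\sum_{s\le t}E[1\wedge|\Delta X_s|^2]$ corresponds to a point of $J^X$ and the sum is bounded by $(1\wedge|x|^2)\star\nu^X_t<\infty$, the set $J^X$ is at most countable. That the family $(\Delta X_s)_{s\in J^X}$ is independent follows from the independence of the increments of $X$: for $s_1<\dots<s_n$ the entire process of increments of $X$ after $s_1$ is independent of $\mathscr{F}_{s_1}\ni\Delta X_{s_1}$, and one iterates.

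\textbf{Well-definedness of $X^{ftd}$.} I would split $x\1_{J^X}\star\mu^X = h(x)\1_{J^X}\star\mu^X + (x-h(x))\1_{J^X}\star\mu^X$. In the second term the integrand vanishes near the origin, so only jumps of $X$ of size bounded away from $0$ contribute, and there are a.s.\ finitely many of those on each compact interval since $X$ is \cadlag. For the first term, $\sum_{s\le t}|h(\Delta X_s)|\1_{J^X}(s)$ is non-negative with expectation $|h(x)|\1_{J^X}\star\nu^X_t$ by Tonelli and \eqref{law jump} (using $h(0)=0$), which is finite by the standing assumption; hence this sum converges absolutely a.s. Thus $X^{ftd}$ is a well-defined \cadlag process of finite variation on compacts, with $X^{ftd}_t = \sum_{s\le t,\,s\in J^X}\Delta X_s$ for each $t$. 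Since $X^{ftd}_t - X^{ftd}_r$ is a measurable function of the increments of $X$ after time $r$, it is independent of $\mathscr{F}_r$; hence $X^{ftd}$ — and therefore also $X^{qlc} = X - X^{ftd}$ — is a PII.

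\textbf{Characteristics of $X^{qlc}$.} Next I would substitute the split of $X^{ftd}$ into $X^{qlc} = X - X^{ftd}$, decompose each integral in the representation of $X$ along $\1_{J^X} + \1_{\complement J^X} = 1$, and use $h(x)\1_{J^X}\star(\mu^X-\nu^X) - h(x)\1_{J^X}\star\mu^X = -h(x)\1_{J^X}\star\nu^X$; the terms involving $(x-h(x))\1_{J^X}\star\mu^X$ cancel and one arrives at
\[
X^{qlc} = \big(B^X - h(x)\1_{J^X}\star\nu^X\big) + X^c + h(x)\1_{\complement J^X}\star(\mu^X-\nu^X) + (x-h(x))\1_{\complement J^X}\star\mu^X .
\]
From $\Delta X^{qlc}_s = \Delta X_s\1_{\complement J^X}(s)$ one reads that the jump measure of $X^{qlc}$ is $\1_{\complement J^X}\mu^X$, with compensator $\nu^{X,qlc}$; since $X^{ftd}$ has finite variation, $(X^{qlc})^c = X^c$, giving second characteristic $C^X$; and the displayed identity is the canonical decomposition of $X^{qlc}$, whence its first characteristic equals $B^{X,qlc} = B^X - h(x)\1_{J^X}\star\nu^X$. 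Quasi-left continuity then follows because $\nu^{X,qlc}$ has no atoms in the time variable and $B^{X,qlc}$ is continuous — the jumps of $B^X$ lie in $J^X$ with $\Delta B^X_t = \int h(x)\,\nu^X(\{t\}\times\dd x)$, which is exactly cancelled by the subtracted term — so $X^{qlc}$ has no fixed time of discontinuity, equivalently it is quasi-left continuous by \cite[Chapter~II]{JS}.

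\textbf{Independence, and the main obstacle.} Finally, I would establish the independence $X^{qlc}\perp X^{ftd}$. By the preceding step the pair $(X^{qlc},X^{ftd})$ is an $\mathbb{R}^{2d}$-valued PII whose two components never jump simultaneously (the indicators $\1_{\complement J^X}$ and $\1_{J^X}$ are disjoint) and for which $X^{ftd}$ contributes no Gaussian part; together with the independence of $(\Delta X_s)_{s\in J^X}$, this forces the joint characteristic function of $(X^{qlc}_{t_1},\dots,X^{qlc}_{t_m},X^{ftd}_{t_1},\dots,X^{ftd}_{t_m})$ to factor into the product of the two marginal characteristic functions. Carrying this out — equivalently, invoking the L\'evy--It\^o-type mutual independence of $X^c$, of the restriction of $\mu^X$ to $\complement J^X \times \mathbb{R}^d$, and of the atoms $(\Delta X_s)_{s\in J^X}$ — is the one step that is not mere bookkeeping, since the characteristic function of a PII increment carries a product over the fixed times of discontinuity inside the interval, and one must check that this product involves only the $X^{ftd}$-variable while the continuous-in-time factor separates in the two groups of variables. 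The a.s.\ convergence of the series defining $X^{ftd}$, by contrast, is immediate from the standing assumption.
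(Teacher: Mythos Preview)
Your proposal is correct and follows essentially the same route as the paper. The paper delegates the computation of the characteristics of $X^{qlc}$ to \cite[Theorem II.5.10]{JS} where you spell it out from the canonical decomposition, and conversely the paper makes the independence step fully explicit by writing down the $\mathbb{R}^{2d}$-valued characteristics $(\widehat{B},\widehat{C},\widehat{\nu})$ of the pair $(X^{ftd},X^{qlc})$ and applying the L\'evy--Khinchine formula \cite[Theorem II.5.2]{JS} to see the characteristic function factor, which is exactly the ``one step that is not mere bookkeeping'' you flag; but the underlying argument is the same in both cases.
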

\begin{proof}
	The process \(X^{ftd}\) is well-defined since \[E\left[|h(x)| \1_{J^X}\star \mu^X_t\right] = |h(x)|\1_{J^X}\star \nu^X_t < \infty,\] by assumption, and \(|x - h(x)|\1_{J^X}\star \mu^X_t < \infty\), by the \cadlag paths of \(X\). The independence of the sequence \((\Delta X_s)_{s \in J}\) follows by the independent increments of \(X\) and the fact that independence extends to a.s. limits.
	The formula \eqref{law jump} is due to \cite[Theorem II.5.2]{JS}.
	It follows from \cite[Theorem II.5.10]{JS} that \(X^{qlc}\) is a quasi-left continuous PII with characteristics \((B^{X, qlc}, C^X, \nu^{X, qlc})\). The independence of \(X^{ftd}\) and \(X^{qlc}\) follows from \cite[Corollary 2.7, Lemma 2.8]{Kallenberg} and \cite[Theorem II.5.2]{JS}.
	Let us give a few more details on this point: By \cite[Corollary 2.7]{Kallenberg} it suffices to show that for all sequences \(0 \leq t_1 < ... < t_n< \infty\) the vectors \((X^{ftd}_{t_1}, ..., X^{ftd}_{t_n})\) and \((X^{qlc}_{t_1}, ..., X^{qlc}_{t_n})\) are independent. Moreover, by \cite[Lemma 2.6]{Kallenberg}, it even suffices to show that for all \(0 \leq s < t < \infty\) the random variables \(X^{ftd}_t - X^{ftd}_s\) and \(X^{qlc}_t -X^{qlc}_s\) are independent. We define the  (deterministic) processes
	\begin{align*}
	\widehat{B} &\triangleq \begin{pmatrix}h(x) \1_{J^X} \star \nu^X\\B^{X, qlc}\end{pmatrix},\qquad \widehat{C} \triangleq \begin{pmatrix} 0&0\\0&C^X\end{pmatrix}
	\end{align*}
	and the measure
	\begin{align*}\widehat{\nu}(\dd t \times \dd x \times \dd y) &\triangleq \1_{J^X}(t) \nu^X(\dd t \times \dd x) \varepsilon_{0} (\dd y) \\&\hspace{2cm}+ \1_{\complement J^X} (t) \nu^X(\dd t \times \dd y)\varepsilon_0 (\dd x).
	\end{align*}
	It is routine to check that the triplet \((\widehat{B}, \widehat{C}, \widehat{\nu})\) satisfies \cite[II.5.3 - II.5.5]{JS} w.r.t. the truncation function \(\hat{h}(x, y) \triangleq (h(x), h(y))\).
	Thus, using \cite[Theorem II.5.10]{JS}, it follows that the \(\mathbb{R}^{2d}\)-valued process \((X^{ftd}, X^{qlc})\) is a PII with characteristics \((\widehat{B}, \widehat{C}, \widehat{\nu})\) corresponding to the truncation function \(\hat{h}\).
	Now, an application of \cite[Theorem II.5.2]{JS} yields that for all \(u, v \in \mathbb{R}^d\) and \(0 \leq s < t < \infty\)
	\begin{align*}E&\left[\exp\left(\sqrt{-1} \left\langle u, X^{ftd}_t - X^{ftd}_s\right\rangle + \sqrt{-1} \left\langle v, X^{qlc}_t - X^{qlc}_s\right\rangle\right)\right] \\&\hspace{0.8cm}= E\left[\exp\left(\sqrt{-1}\left \langle u, X^{ftd}_t - X^{ftd}_s\right\rangle\right)\right] E\left[\exp\left( \sqrt{-1}\left\langle v, X^{qlc}_t - X^{qlc}_s\right\rangle\right)\right].\end{align*}
	Thus, the independence of \(X^{ftd}_t\) and \(X^{qlc}_t\) follows by the uniqueness theorem characteristics functions.
\end{proof}

By the independence we can consider the fixed times of discontinuity and the quasi-left continuous parts separately. 
Let \(Y\) be a second PII with characteristics \((B^Y, C^Y, \nu^Y)\) such that \(|h(x)| \1_{J^Y} \star \nu^Y_t < \infty\) for all \(t \in [0, \infty)\).
\begin{proposition}\label{prop: fts qlc ordnen}
	Let \(\bullet \in \{pst, st\}\). If \(X^{ftd} \preceq_{\bullet} Y^{ftd}\) and \(X^{qlc} \preceq_{\bullet} Y^{qlc}\), then \(X \preceq_{\bullet} Y\).
	If \(|x - h(x)| \star \nu^X_t +|x - h(x)|\star \nu^Y_t < \infty\) for all \(t \in [0, \infty)\), then the statement also holds for \(\bullet \in \{cx, icx\}\).
\end{proposition}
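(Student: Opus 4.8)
The plan is to reduce the proposition to the stability of each order under the addition of independent summands, and then to apply this to the decompositions $X = X^{qlc} + X^{ftd}$ and $Y = Y^{qlc} + Y^{ftd}$ from Lemma~\ref{lem: deco PII}, using that $X^{qlc}$ is independent of $X^{ftd}$ and $Y^{qlc}$ of $Y^{ftd}$. Thus it suffices to prove the following: if $U, U'$ are independent, $V, V'$ are independent, $U \preceq_\bullet V$ and $U' \preceq_\bullet V'$, then $U + U' \preceq_\bullet V + V'$ — in the finite-dimensional sense for $\bullet \in \{st, cx, icx\}$ and pathwise for $\bullet = pst$; the proposition then follows with $U = X^{qlc}$, $U' = X^{ftd}$, $V = Y^{qlc}$, $V' = Y^{ftd}$.

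For $\bullet \in \{st, cx, icx\}$, fix $m \in \mathbb{N}$, times $0 \le t_1 < \dots < t_m < \infty$ and $f \in \mathcal{F}^m_\bullet$. Conditioning on $(X^{ftd}_{t_1}, \dots, X^{ftd}_{t_m})$ and using independence,
\[
\E\big[f(X_{t_1}, \dots, X_{t_m})\big] = \E\big[ \varphi(X^{ftd}_{t_1}, \dots, X^{ftd}_{t_m}) \big], \qquad \varphi(b) \triangleq \E\big[f(X^{qlc}_{t_1} + b_1, \dots, X^{qlc}_{t_m} + b_m)\big].
\]
For every fixed $b \in \mathbb{R}^{d \cdot m}$ the translate $(a_1, \dots, a_m) \mapsto f(a_1 + b_1, \dots, a_m + b_m)$ again lies in $\mathcal{F}^m_\bullet$, since ``increasing'', ``convex'' and ``increasing and convex'' are all preserved under translation. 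Hence $X^{qlc} \preceq_\bullet Y^{qlc}$ gives $\varphi(b) \le \psi(b)$, where $\psi(b) \triangleq \E[f(Y^{qlc}_{t_1} + b_1, \dots, Y^{qlc}_{t_m} + b_m)]$, and $\psi$ is itself in $\mathcal{F}^m_\bullet$: monotonicity of $\psi$ is inherited directly from $f$, and convexity because $b \mapsto f(a + b)$ is convex for each $a$ and integration preserves convexity. Applying $X^{ftd} \preceq_\bullet Y^{ftd}$ to $\psi$ and unwinding the conditioning then gives $\E[f(X_{t_1}, \dots, X_{t_m})] \le \E[f(Y_{t_1}, \dots, Y_{t_m})]$, which is the claim. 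The case $\bullet = pst$ (with $d = 1$) runs along the same lines with $\mathbb{R}^{d \cdot m}$ replaced by $\mathbb{D}$: the decomposition $X = X^{qlc} + X^{ftd}$ holds pathwise, the map $\alpha \mapsto \alpha + \beta$ sends $\mathcal{F}_{pst}$ into itself, the resulting functional $\beta \mapsto \E[f(Y^{qlc} + \beta)]$ is Borel and again lies in $\mathcal{F}_{pst}$, and the conditioning step is legitimate because addition $\mathbb{D} \times \mathbb{D} \to \mathbb{D}$ is Borel measurable (its composition with every coordinate projection is).

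The only step requiring genuine care is integrability. For $\bullet \in \{st, pst\}$ nothing needs to be added beyond the ``whenever well-defined'' convention: splitting $f$ into positive and negative parts, the inequalities above hold in $[-\infty, +\infty]$. For $\bullet \in \{cx, icx\}$ one must guarantee that $\varphi$ and $\psi$ are well-defined (not of the form $\infty - \infty$) and that the conditioning/Fubini steps apply, and this is precisely where the hypothesis $|x - h(x)| \star \nu^X_t + |x - h(x)| \star \nu^Y_t < \infty$ is used: it is equivalent to $\E|X_t| < \infty$ and $\E|Y_t| < \infty$ for all $t$, and together with the standing assumption $|h(x)| \1_{J^X} \star \nu^X_t < \infty$ it yields $\E|X^{ftd}_t| \le |x| \1_{J^X} \star \nu^X_t < \infty$, hence $\E|X^{qlc}_t| < \infty$, and likewise for $Y$. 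Since a convex function is bounded below by an affine one, finiteness of all these first moments makes every expectation above well-defined in $(-\infty, +\infty]$ after an affine minorant is subtracted, and Fubini--Tonelli applies to the nonnegative remainder. I expect this integrability bookkeeping to be the only real obstacle; the rest is a routine conditioning argument.
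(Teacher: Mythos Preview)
Your proof is correct and follows essentially the same Fubini/conditioning argument as the paper, exploiting the independence of $X^{qlc}$ and $X^{ftd}$ (resp.\ $Y^{qlc}$ and $Y^{ftd}$) together with the fact that each class $\mathcal{F}^m_\bullet$ is closed under translation. The only cosmetic difference is in the handling of integrability: the paper first reduces to bounded generators for $\bullet \in \{st, pst\}$ and to Lipschitz convex generators (via inf-convolution) for $\bullet \in \{cx, icx\}$, whereas you work directly with general test functions via the $[-\infty,+\infty]$ convention and affine minorants.
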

\begin{proof}
	Let \(f \in \mathcal{F}_{pst}\) be bounded. Then, by the independence and Fubini's theorem,
	\begin{align}
	E[f(X)] 
	&= \iint f\left(\omega + \alpha\right) P(X^{ftd} \in \dd \omega) P(X^{qlc} \in \dd \alpha) \nonumber
	\\&\leq \iint f\left(\omega+ \alpha\right) P(Y^{ftd} \in \dd \omega) P(X^{qlc} \in \dd \alpha)\nonumber
	\\&= \iint f\left(\omega + \alpha\right) P(X^{qlc} \in \dd \alpha) P(Y^{ftd} \in \dd \omega)\label{eq: Fubini appl}
	\\&\leq \iint f\left(\omega + \alpha\right) P(Y^{qlc} \in \dd \alpha) P(Y^{ftd} \in \dd \omega)\nonumber
	= E[f(Y)].
	\end{align}
	Since the stochastic order \(\preceq_{pst}\) is generated by the class of bounded functions in \(\mathcal{F}_{pst}\), see \cite[pp. 81]{muller2002comparison}, we can conclude that \(X \preceq_{pst} Y\). The case \(\preceq_{st}\) follows identically. 
	
	For the convex cases, we note that each (increasing) convex function can be approximated pointwise in a monotone manner by (increasing) Lipschitz continuous convex functions.
	More precisely, for a function \(f \colon \mathbb{R}^d \to \mathbb{R}\) we set \(f_n (x) \triangleq \inf_{z \in \mathbb{R}^d} (f(z) + n |x - z|)\), which is the inf-convolution of \(f\). It is well-known that in the case where \(f\) is convex, the inf-convolution \(f_n\) is Lipschitz continuous and convex, \(f_n(x) \leq f_{n+1}(x) \leq f(x)\) and \(f_n \to f\) pointwise as \(n \to \infty\), see, e.g., \cite[Lemma 2]{10.2307/2161214}. 
	Moreover, if \(f\) is increasing, then \(f_n\) is also increasing. To see this, note that for \(x \leq y\) we have 
	\begin{equation}\label{eq: inf-conv increasing}
	\begin{split}
	f_n(x) = \inf_{z \in \mathbb{R}^d} (f(z) + n|x - z|) &\leq \inf_{z \in \mathbb{R}^d} (f(z + y - x) + n|y - (z + y - x)|) 
	\\&= \inf_{z \in \mathbb{R}^d} (f(z) + n|y - z|) = f_n(y). 
	\end{split}
	\end{equation}
	Thus, by the monotone convergence theorem, we may restrict ourselves to (increasing) Lipschitz continuous convex functions \(\mathbb{R}^{d \cdot n} \to \mathbb{R}\). Denote one of these by \(f\). 
	To use the same argumentation as in the case \(\preceq_{pst}\), we only have to verify the application of Fubini's theorem, see \eqref{eq: Fubini appl}.
	The assumptions \(|x - h(x)| \star \nu^X_t < \infty\) and \(|x - h(x)| \star \nu^Y_t < \infty\) imply that
	\(E[|Y^{ftd}_t|] < \infty\) and \(E[|X^{qlc}_t|] < \infty\). Hence, since all Lipschitz continuous functions are of linear growth, we have
	\begin{align*}
	\iint |f(x + y)|& P\left(\left(Y^{ftd}_{t_1}, ..., Y^{ftd}_{t_n}\right) \in \dd x\right)  P\left(\left(X^{qlc}_{t_1}, ..., X^{qlc}_{t_n}\right) \in \dd y\right) 
	\\&\leq \textup{const. } \left(1 + \sum_{k = 1}^n E\left[\left|Y^{ftd}_{t_k}\right|\right] + \sum_{j = 1}^n E\left[\left|X^{qlc}_{t_j}\right|\right] \right) < \infty
	\end{align*}
	 for all \(0 \leq t_1 < ... < t_n < \infty\). Therefore, we can apply Fubini's theorem and the claim follows similar to the case \(\preceq_{pst}\).
\end{proof}
\subsection{Stochastic Orders for the Fixed Times of Discontinuity}
For the fixed times of discontinuity it suffices to order each summand separately. Let \(X\) and \(Y\) be as in the previous section. 
\begin{proposition}
	If for all \(t \in [0, \infty)\) it holds that \(\Delta X^{ftd}_t \preceq_{st} \Delta Y^{ftd}_t\), then \(X^{ftd} \preceq_{pst} Y^{ftd}\). Moreover, if for all \(t \in [0, \infty)\) it holds that \(|x| \1_{J^Y} \star \nu^Y_t < \infty\) and 
	 \(\Delta X^{ftd}_t \preceq_{(i)cx} \Delta Y^{ftd}_t\), then \(X^{ftd} \preceq_{(i)cx} Y^{ftd}\).
\end{proposition}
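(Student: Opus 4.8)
The plan is to reduce the statement, exactly as in the proof of Proposition~\ref{prop: fts qlc ordnen}, to the construction of a suitable coupling of $X^{ftd}$ and $Y^{ftd}$ on a single probability space, exploiting that by Lemma~\ref{lem: deco PII} both processes are sums of \emph{independent} jumps. Put $J \triangleq J^X \cup J^Y$, which is countable, so that $X^{ftd}_t = \sum_{s \in J \cap [0,t]} \Delta X^{ftd}_s$ and $Y^{ftd}_t = \sum_{s \in J \cap [0,t]} \Delta Y^{ftd}_s$ with independent families $(\Delta X^{ftd}_s)_{s \in J}$ and $(\Delta Y^{ftd}_s)_{s \in J}$; the terms indexed by $s \notin J^X$ (resp.\ $s \notin J^Y$) vanish identically, and hence $\Delta X^{ftd}_s \preceq_{st} \Delta Y^{ftd}_s$ (resp.\ $\preceq_{(i)cx}$) holds for every $s \in J$. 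In the first part of the statement we take $d = 1$, since $\preceq_{pst}$ is only defined in that case.

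For $\preceq_{pst}$ I would invoke Strassen's theorem (see, e.g., \cite{muller2002comparison}) to obtain, for each $s \in J$, a coupling $(\xi_s, \eta_s)$ with $\xi_s \sim \Delta X^{ftd}_s$, $\eta_s \sim \Delta Y^{ftd}_s$ and $\xi_s \leq \eta_s$ almost surely, and then pass to the countable product, so that the pairs $(\xi_s, \eta_s)_{s \in J}$ live on one probability space and are independent over $s \in J$. Set $\widetilde{X}^{ftd}_t \triangleq \sum_{s \in J \cap [0,t]} \xi_s$ and $\widetilde{Y}^{ftd}_t \triangleq \sum_{s \in J \cap [0,t]} \eta_s$. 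The point to verify is that $\widetilde{X}^{ftd}$ and $\widetilde{Y}^{ftd}$ are genuine $\mathbb{D}$-valued random elements with $\operatorname{Law}(\widetilde{X}^{ftd}) = \operatorname{Law}(X^{ftd})$ and $\operatorname{Law}(\widetilde{Y}^{ftd}) = \operatorname{Law}(Y^{ftd})$; this rests on the fact that the relevant summability and path-regularity criteria involve only one-dimensional marginal laws. Indeed, $E[\sum_{s \in J \cap [0,t]} |h(\xi_s)|] = |h(x)| \1_{J^X} \star \nu^X_t < \infty$, and for each $\varepsilon > 0$ the independent events $\{|\xi_s| > \varepsilon\}$, $s \in J$, have the same probabilities as the corresponding events for the process $X^{ftd}$, whose \cadlag paths force only finitely many jumps of size larger than $\varepsilon$ on any $[0,t]$ and hence (Borel--Cantelli) their probabilities to be summable there; consequently the sum defining $\widetilde{X}^{ftd}$ is, on every compact interval, absolutely convergent up to finitely many terms, so $\widetilde{X}^{ftd}$ almost surely has \cadlag paths, and since $(\xi_s)_{s \in J}$ and $(\Delta X^{ftd}_s)_{s \in J}$ are independent families with matching marginals, $\widetilde{X}^{ftd}$ and $X^{ftd}$ have the same finite-dimensional distributions, hence the same law on $\mathbb{D}$ (Lemma~\ref{lem: deco PII}, \cite[Theorem II.5.2]{JS}); likewise for $\widetilde{Y}^{ftd}$. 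On the probability-one event on which $\xi_s \leq \eta_s$ for all $s \in J$ we then have $\widetilde{X}^{ftd}_t \leq \widetilde{Y}^{ftd}_t$ for all $t$, so $f(\widetilde{X}^{ftd}) \leq f(\widetilde{Y}^{ftd})$ for every $f \in \mathcal{F}_{pst}$; for bounded such $f$ this yields $E[f(X^{ftd})] = E[f(\widetilde{X}^{ftd})] \leq E[f(\widetilde{Y}^{ftd})] = E[f(Y^{ftd})]$, and the extension to all $f \in \mathcal{F}_{pst}$ is the generating-class argument already used in Proposition~\ref{prop: fts qlc ordnen}.

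For $\preceq_{cx}$ and $\preceq_{icx}$ I would run the same scheme with the martingale form of Strassen's theorem: for each $s \in J$ there is a coupling $(\xi_s, \eta_s)$ with the correct marginals and $E[\eta_s \mid \xi_s] = \xi_s$ in the convex case, respectively $E[\eta_s \mid \xi_s] \geq \xi_s$ componentwise in the increasing convex case, again taken independent over $s \in J$, and $\widetilde{X}^{ftd}$, $\widetilde{Y}^{ftd}$ are defined as above. Here the hypothesis $|x| \1_{J^Y} \star \nu^Y_t < \infty$ gives $E[\sum_{s \in J \cap [0,t]} |\eta_s|] < \infty$, so $\widetilde{Y}^{ftd}_t \in L^1$. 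Writing $\mathcal{G} \triangleq \sigma(\xi_s : s \in J)$, independence over $s$ gives $E[\eta_s \mid \mathcal{G}] = E[\eta_s \mid \xi_s]$, and dominated convergence lets one exchange the sum with the conditional expectation, so that $E[\widetilde{Y}^{ftd}_t \mid \mathcal{G}] = \sum_{s \in J \cap [0,t]} E[\eta_s \mid \xi_s]$, which equals $\widetilde{X}^{ftd}_t$ in the convex case and dominates it componentwise in the increasing convex case. After reducing, exactly as in Proposition~\ref{prop: fts qlc ordnen}, to Lipschitz continuous (increasing) convex $f$, the conditional Jensen inequality — combined with the monotonicity of $f$ in the increasing case — gives $f(\widetilde{X}^{ftd}_{t_1}, \dots, \widetilde{X}^{ftd}_{t_n}) \leq E[f(\widetilde{Y}^{ftd}_{t_1}, \dots, \widetilde{Y}^{ftd}_{t_n}) \mid \mathcal{G}]$ for all $0 \leq t_1 < \dots < t_n < \infty$; taking expectations and using that $\widetilde{X}^{ftd}$, $\widetilde{Y}^{ftd}$ have the same laws as $X^{ftd}$, $Y^{ftd}$ yields $X^{ftd} \preceq_{(i)cx} Y^{ftd}$, the left-hand expectation being automatically well-defined in $[-\infty, \infty)$ because it is bounded above by the finite number $E[f(\widetilde{Y}^{ftd}_{t_1}, \dots, \widetilde{Y}^{ftd}_{t_n})]$.

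The step I expect to be most delicate is, in each part, the verification that the path-valued processes $\widetilde{X}^{ftd}$ and $\widetilde{Y}^{ftd}$ assembled from the independent per-jump couplings are bona fide \cadlag random elements carrying the intended laws — the key being that convergence and regularity of these jump sums are functions of the one-dimensional marginals alone — together with, in the convex case, the interchange of the infinite sum and the conditional expectation, which is precisely what the moment assumption $|x| \1_{J^Y} \star \nu^Y_t < \infty$ is there to secure. The remainder is a routine combination of Strassen's theorem, the conditional Jensen inequality, and the approximation and generating-class arguments already in place.
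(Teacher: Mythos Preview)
Your proposal is correct and follows essentially the same route as the paper's proof: per-jump Strassen couplings over the countable set $J = J^X \cup J^Y$ made independent on a product space, a Borel--Cantelli/integrability argument to ensure the jump sums are well-defined \cadlag processes with the right laws, and then the pathwise inequality (for $\preceq_{pst}$) respectively the conditional Jensen inequality after exchanging sum and conditional expectation via the moment hypothesis (for $\preceq_{(i)cx}$). The paper carries out the law identification by an explicit characteristic-function computation and the convergence of the sums by the truncation $Z_s = \Delta X_s \1_{\{|\Delta X_s|\le 1\}}$, but these are exactly the details underlying the steps you flag as delicate.
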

Here, \(\Delta X^{ftd}_t \preceq_{\bullet} \Delta Y^{ftd}_t\) refers to stochastic orders of \(\mathbb{R}^d\)-valued random variables. 
\\\\
\begin{proof}
	In the case \(\preceq_{pst}\) the claim follows from Strassen's theorem \cite[Theorem 1]{kamae1977}: 
	We find a probability space which supports two sequences \((\Delta X_t)_{t \in J^X \cup J^Y}\) and \((\Delta Y_t)_{t \in J^X \cup J^Y}\) of independent random variables such that \(\Delta X_t\) has law \eqref{law jump}, \(\Delta Y_t\) has law \eqref{law jump} with \(\nu^X\) replaced by \(\nu^Y\) and a.s. \(\Delta X_t \leq \Delta Y_t\) for all \(t \in J^X \cup J^Y\).
	Set \(J_t \triangleq (J^X \cup J^Y) \cap [0, t]\) for \(t \in [0, \infty)\).
	We claim that the sums \(\sum_{s \in J_t} \Delta X_s\) and \(\sum_{s \in J_t} \Delta Y_s\) converge a.s. To see this, set \(Z_s \triangleq \Delta X_s \1_{\{|\Delta X_s| \leq 1\}}\) and note, by \cite[II.5.5.(i)]{JS}, that we have
	\[
	\sum_{s \in J_t} P(Z_s \not = \Delta X_s) = \sum_{s \in J_t} P(|\Delta X_s| > 1) \leq \nu^X([0, t] \times \{|x| > 1\}) < \infty.
	\]
	Hence, by the Borel-Cantelli lemma, \(\sum_{s \in J_t} \Delta X_s\) converges a.s. if, and only if, \(\sum_{s \in J_t} Z_s\) converges a.s.
	Since \(h\) is a truncation function there exists an \(\epsilon > 0\) such that \(h(x) = x\) on \(\{|x| \leq \epsilon\}\).
	Since
	\begin{align*}
\sum_{s \in J_t} E[|Z_s|] 
&\leq |x - h(x)| \1_{\{|x| \leq 1\}} \star \nu^X_t + |h(x)| \star \nu^X_t 
	\\&\leq \textup{const. } \nu^X([0, t] \times \{\epsilon < |x| \leq 1\}) + |h(x)| \star \nu^X_t < \infty,
	\end{align*}
	due to \cite[II.5.5.(i)]{JS} and the assumption that \(|h(x)| \star \nu^X_t < \infty\), we conclude that \(\sum_{s \in J_t} Z_s\) converges a.s.
	Thus, \(\sum_{s \in J_t} \Delta X_s\) converges a.s. and \(\sum_{s \in J_t} \Delta Y_s\) converges a.s. by the same arguments.
	We also claim that \(\sum_{s \in J_\cdot} \Delta X_s\) has the same law as \(X^{ftd}\) and \(\sum_{s \in J_\cdot} \Delta Y_s\) has the same law as \(Y^{ftd}\).
	By the \cadlag paths, we only have to show that the processes have the same finite dimensional distributions, see, for instance, \cite[Lemma VI.3.19]{JS}.
	 Now, take \(0 = t_0 \leq t_1 < ... < t_n < \infty\) and let \(A\in \mathbb{R}^n \otimes \mathbb{R}^n\) be the lower triangular matrix with \(A_{ij} = 1\) for all \(i \geq j\).
	Then, \begin{align*}
	\left(X_{t_1}^{ftd}, ..., X^{ftd}_{t_n}\right)^\text{tr} &= A \left(X^{ftd}_{t_1} - X^{ftd}_{t_0}, ..., X^{ftd}_{t_n} - X^{ftd}_{t_{n-1}}\right)^\textup{tr},\\
	\left(\sum_{s \in J_{t_1}} \Delta X_s, ..., \sum_{s \in J_{t_n}} \Delta X_s\right)^\textup{tr} &=  A\left(\sum_{s \in J_{t_1}\backslash J_{t_0}} \Delta X_s, ..., \sum_{s \in J_{t_n} \backslash J_{t_{n-1}}} \Delta X_s\right)^\textup{tr}.
	\end{align*}
	Now, using the uniqueness theorem for characteristic functions and the fact that the entries of the right hand vectors are independent, it suffices to show that for all \(0 \leq s < t < \infty\) the sum \(\sum_{r \in J_t \backslash J_s} \Delta X_r\) has the same law as \(X^{ftd}_t - X^{ftd}_s\).
	This, however, follows from the fact that for all \(u \in \mathbb{R}^d\) 
	\begin{align*}
	E&\left[ \exp \left( \sqrt{-1}\ \bigg\langle u, \sum_{r \in J_t\backslash J_s} \Delta X_r\bigg\rangle\right)\right] 
	\\&\hspace{1.6cm}= \prod_{r \in (s, t]} \left(1 + \int \left(\exp\left(\sqrt{-1} \langle u, x\rangle\right) - 1 \right) \nu^X(\{r\}\times \dd x)\right),
	\end{align*}
	\cite[Theorem II.5.2]{JS} and the uniqueness theorem for characteristic functions.
	Thus, we conclude that \(\sum_{s \in J_\cdot} \Delta X_s\) has the same law as \(X^{ftd}\). The same argument also shows that \(\sum_{s \in J_\cdot} \Delta Y_s\) has the same law as \(Y^{ftd}\).
	Now, a.s. \(\sum_{s \in J_\cdot} \Delta X_s \leq \sum_{s \in J_\cdot} \Delta Y_s\) implies the stochastic order \(X^{ftd} \preceq_{pst} Y^{ftd}\).

	Let us presume that \(|x| \1_{J^Y} \star \nu^Y_t < \infty\) for all \(t \in [0, \infty)\).
	If \(\Delta X_t \preceq_{cx} \Delta Y_t\) for all \(t \in J^X \cup J^Y\), then, by Strassen's theorem \cite[Theorem 3.4.2]{muller2002comparison}, we find a probability space which supports two sequences 
	\((\Delta X_t)_{t \in J^X \cup J^Y}\) and \((\Delta Y_t)_{t \in J^X \cup J^Y}\) of independent random variables such that \(\Delta X_t\) has law \eqref{law jump} and \(\Delta Y_t\) has law \eqref{law jump} with \(\nu^X\) replaced by \(\nu^Y\) and a.s. \(E[\Delta Y_t |\mathscr{H}] = X_t\) for all \(t \in J^X \cup J^Y\), where \(\mathscr{H} \equiv \sigma(\Delta X_s, s \in J^X \cup J^Y)\).
	Fix \(t \in [0, \infty)\) and set \(J_t\) as above.
	The assumption \(|x| \1_{J^Y} \star \nu^Y_t < \infty\) implies that 
	\[E \left[\sum_{s \in J_t} | \Delta Y_s|\right] < \infty.\]
	Thus, we have a.s.
	\begin{align*}
	E\left[ \sum_{s \in J_t} \Delta Y_s \bigg| \mathscr{H} \right] &= \sum_{s \in J_t} E\left[\Delta Y_s | \mathscr{H}\right] = \sum_{s \in J_t} \Delta X_s.
	\end{align*}
	Since \(\sum_{s \in J_\cdot} \Delta X_s\) has the same law as \(X^{ftd}\) and \(\sum_{s \in J_\cdot} \Delta Y_s\) has the same law as \(Y^{ftd}\), 
	we conclude \(X^{ftd} \preceq_{cx} Y^{ftd}\) from the conditional Jensen's inequality.
	The case \(\preceq_{icx}\) follows similarly.
\end{proof}
Explicit conditions for stochastic orders of \(\mathbb{R}^d\)-valued random variables can be found in \cite{muller2002comparison}.
Next, we will discuss the quasi-left continuous parts.

\subsection{Stochastic Orders for Quasi-Left Continuous PIIs}\label{Main Results}
Let \(X\) and \(Y\) be quasi-left continuous PIIs with characteristics \((B^X, C^X, \nu^X)\) and \((B^Y, C^Y, \nu^Y)\) respectively.
First, we assume that the discontinuous parts of \(X\) and \(Y\) are of finite variation and that \(X\) and \(Y\) have first moments, i.e. for all \(t \in [0, \infty)\) we assume that \(|x| \star \nu^X_t +|x| \star \nu^Y_t < \infty\).
In this case, the PIIs \(X\) and \(Y\) have a decomposition
\begin{align*}
X &= B^X - h(x) \star \nu^X + X^c + x \star \mu^X,\\
Y &= B^Y + h(x) \star \nu^Y + Y^c + x \star \mu^Y,
\end{align*}
where \(X^c\) is a Wiener process with variance function \(C^X\) and \(Y^c\) is a Wiener process with variance function \(C^Y\) in the sense of \cite[Definition I.4.9]{JS}.
In particular, \(X^c\) is independent of \(x \star \mu^X\) and \(Y^c\) is independent of \(x \star \mu^Y\), see \cite[Lemma 13.6]{Kallenberg}.
\begin{proposition}\label{prop: Gauss CPP}
	Let \(\bullet \in \{pst, st, cx, icx\}\). If \(B^X - h(x) \star \nu^X + X^c \preceq_{\bullet} B^Y - h(x) \star \nu^Y + Y^c\) and \(x \star \mu^X \preceq_{\bullet} x \star \mu^Y\), then \(X \preceq_\bullet Y\).
\end{proposition}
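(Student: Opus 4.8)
The plan is to argue exactly as in the proof of Proposition~\ref{prop: fts qlc ordnen}. Write $X = U^X + V^X$, where $U^X \triangleq B^X - h(x)\star\nu^X + X^c$ is the drift-plus-Gaussian summand and $V^X \triangleq x\star\mu^X$ is the jump summand, and analogously $Y = U^Y + V^Y$. Since $B^X - h(x)\star\nu^X$ is deterministic and finite-valued (its modulus being dominated by $|x|\star\nu^X < \infty$), the process $U^X$ is merely a deterministic shift of the Wiener process $X^c$ and hence inherits from it independence of $V^X = x\star\mu^X$; likewise $U^Y$ and $V^Y$ are independent. The hypotheses say precisely that $U^X \preceq_\bullet U^Y$ and $V^X \preceq_\bullet V^Y$, so we are in exactly the situation of Proposition~\ref{prop: fts qlc ordnen}: each of $X$, $Y$ is a sum of two independent, componentwise ordered parts.

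For $\bullet \in \{pst, st\}$, recall that the order is generated by the bounded members of $\mathcal{F}_{pst}$ (resp. $\mathcal{F}^m_{st}$), see \cite[pp.~81]{muller2002comparison}, so it suffices to treat bounded $f$. For fixed $\alpha$, the map $\beta \mapsto f(\beta + \alpha)$ again lies in the relevant class, since pointwise addition of paths (resp. of vectors) preserves the pointwise order and the addition map on $\mathbb{D}\times\mathbb{D}$ is Borel. One then reproduces the four-line chain of \eqref{eq: Fubini appl} with $(X^{ftd}, X^{qlc})$ replaced by $(V^X, U^X)$ (resp. $(V^Y, U^Y)$): disintegrate $E[f(U^X + V^X)]$ by independence and Fubini, apply $V^X \preceq_\bullet V^Y$, interchange the order of integration, and apply $U^X \preceq_\bullet U^Y$. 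This gives $E[f(X)] \le E[f(Y)]$, hence $X \preceq_\bullet Y$.

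For $\bullet \in \{cx, icx\}$, first pass — via the inf-convolutions $f_n(x) = \inf_{z \in \mathbb{R}^{d\cdot m}}(f(z) + n|x - z|)$ and the monotone convergence theorem, exactly as in Proposition~\ref{prop: fts qlc ordnen} — to (increasing) Lipschitz continuous convex $f$, which are of linear growth. The integrability required to invoke Fubini then follows from the standing moment assumption of this subsection: $|x|\star\nu^X_t + |x|\star\nu^Y_t < \infty$ yields $E[|V^X_t|] + E[|V^Y_t|] < \infty$; the Wiener processes $X^c, Y^c$ have finite first moments; and the drift summands $B^X - h(x)\star\nu^X$ and $B^Y - h(x)\star\nu^Y$ are deterministic and finite. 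Hence $U^X, V^X, U^Y, V^Y$ all have integrable time-marginals, and by the linear growth of $f$ the relevant double integrals of $|f|$ are finite, so Fubini applies and the four-line chain goes through verbatim.

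I do not anticipate a genuine obstacle: the proposition is essentially a transcription of Proposition~\ref{prop: fts qlc ordnen} with $(X^{ftd}, X^{qlc})$ replaced by $(V^X, U^X)$. The only two points deserving a careful line are (i) the Borel measurability of the addition map on $\mathbb{D}\times\mathbb{D}$ used in the $\preceq_{pst}$ case — the same point already implicit in \eqref{eq: Fubini appl} — and (ii) the independence of $U^X$ and $V^X$, which, because $U^X - X^c$ is deterministic, reduces to the independence of $X^c$ and $x\star\mu^X$ recalled from \cite[Lemma 13.6]{Kallenberg}.
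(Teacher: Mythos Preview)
Your proposal is correct and takes essentially the same approach as the paper: the paper's own proof is the single line ``This follows as in the proof of Proposition~\ref{prop: fts qlc ordnen},'' and you have written out precisely that argument, correctly invoking the independence of $X^c$ and $x\star\mu^X$ from \cite[Lemma 13.6]{Kallenberg} and the standing integrability hypothesis $|x|\star\nu^\bullet_t<\infty$ to justify Fubini in the (increasing) convex case.
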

\begin{proof}
	This follows as in the proof of Proposition \ref{prop: fts qlc ordnen}.
\end{proof}
For all \(t \in [0, \infty)\) the random variable \(B^X_t - h(x) \star \nu^X_t + X^c_t\) is Gaussian with expectation \(B^X_t - h(x)\star \nu^X_t\) and covariance matrix \(C^X_t\), and the random variable \(B^Y - h(x) \star \nu^Y_t + Y^c_t\) is Gaussian with expectation \(B^Y_t - h(x) \star \nu^Y_t\) and covariance matrix \(C^Y_t\). Hence, the question when \(B^X - h(x) \star \nu^X + X^c \preceq_{\bullet} B^Y - h(x) \star \nu^X + Y^c\) is a question when two Gaussian vectors are stochastically ordered. This question, however, is well-studied, see, e.g., \cite{muller2002comparison}, and we restrict ourselves to the Poisson sums by assuming that
\[
B^X - h(x) \star \nu^X =  B^Y - h(x) \star \nu^Y = 0,\quad X^c = Y^c = 0.
\]

We note the following technical observation:
\begin{lemma} There exist a decomposition
	\begin{align*}
	K^\bullet(t, \dd x) \dd A_t = \nu^{\bullet} (\dd t \times \dd x)
	\end{align*}
	where \(K^{\bullet}\) is a Borel transition kernel from \([0, \infty)\) to \(\mathbb{R}\) and \(A\) is an increasing continuous function of finite variation. 
\end{lemma}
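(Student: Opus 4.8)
The plan is to realize the claimed decomposition as a disintegration of the $\sigma$-finite measure $\nu^{\bullet}$ with respect to its time-marginal, which I will first regularize into a continuous increasing function. First I would fix $\bullet$ (the argument is identical for $X$ and $Y$) and consider the measure $m(\dd t) \triangleq (1 \wedge |x|^2) \star \nu^{\bullet}(\dd t \times \dd x)$ on $[0,\infty)$. Since $\nu^{\bullet}$ is $\sigma$-finite and, by \cite[II.5.3–II.5.5]{JS}, satisfies $(1 \wedge |x|^2) \star \nu^{\bullet}_t < \infty$ for all $t$, the measure $m$ is $\sigma$-finite and assigns finite mass to every bounded interval; moreover, because $X$ and $Y$ are assumed quasi-left continuous, $\nu^{\bullet}(\{t\} \times \mathbb{R}^d) = 0$ for every $t$, so $m$ has no atoms. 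Hence $A_t \triangleq m([0,t])$ defines a continuous, increasing function of finite variation on compacts with $A_0 = 0$, which will be our $A$.

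Next I would produce the kernel $K^{\bullet}$. Consider the measure $\widetilde{\nu}(\dd t \times \dd x) \triangleq (1 \wedge |x|^2)\, \nu^{\bullet}(\dd t \times \dd x)$, which is a finite measure on $[0,n] \times \mathbb{R}^d$ for each $n$ and whose time-marginal is exactly $A$ restricted to $[0,n]$. By the existence of regular disintegrations of finite measures on Polish spaces (e.g.\ \cite[Theorem 6.3]{Kallenberg}), there is a Borel transition kernel $\widetilde{K}(t, \dd x)$ from $[0,\infty)$ to $\mathbb{R}^d$, with $\widetilde{K}(t, \cdot)$ a probability measure, such that $\widetilde{\nu}(\dd t \times \dd x) = \widetilde{K}(t, \dd x)\, \dd A_t$; consistency across the exhausting sequence $[0,n]$ lets us patch these into a single kernel up to an $A$-null set. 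Then set
\[
K^{\bullet}(t, \dd x) \triangleq \frac{1}{1 \wedge |x|^2}\, \widetilde{K}(t, \dd x),
\]
interpreted as $0$ on the $A$-null set where $\widetilde{K}(t, \{0\}) > 0$ is problematic; since $\nu^{\bullet}$ charges no set of the form $[0,\infty) \times \{0\}$ by \cite[II.5.5]{JS}, $\widetilde{K}(t,\{0\}) = 0$ for $A$-a.e.\ $t$, so $K^{\bullet}$ is a well-defined (generally not finite) Borel transition kernel. Unwinding definitions gives $K^{\bullet}(t,\dd x)\,\dd A_t = (1 \wedge |x|^2)^{-1}\widetilde{\nu}(\dd t \times \dd x) = \nu^{\bullet}(\dd t \times \dd x)$ by the monotone class theorem, applied first to integrands of the form $\1_{B}(t) g(x)$ with $g \geq 0$ Borel.

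I expect the only genuinely delicate point to be bookkeeping around the set $\{x = 0\}$ and the $A$-null exceptional sets: one must make sure $K^{\bullet}$ is a bona fide Borel kernel (measurable in $t$ as a function into the space of measures on $\mathbb{R}^d$) even though it is merely $\sigma$-finite, and that redefining it on an $A$-null set does not disturb the identity $K^{\bullet}\,\dd A = \nu^{\bullet}$, which it does not since $A$-null sets are $\nu^{\bullet}$-null by construction. The rest — continuity and monotonicity of $A$, finite variation on compacts, the $\sigma$-finiteness of the disintegrated measure — is routine and follows from the standing moment assumption $|x| \star \nu^X_t + |x| \star \nu^Y_t < \infty$ together with \cite[Theorem II.5.2, II.5.3–II.5.5]{JS}.
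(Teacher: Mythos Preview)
Your disintegration argument is sound for a single \(\nu^{\bullet}\), and it is essentially a hands-on version of the result the paper simply cites from \cite[II.1.2, Theorem II.1.8]{JS}. However, you have missed the actual content of the lemma: the point is that a \emph{single} increasing continuous function \(A\) works simultaneously for \(\nu^X\) and \(\nu^Y\). This is why the lemma is stated with the placeholder \(\bullet\), and why the ordering \(K^X \preceq_\bullet K^Y\) introduced immediately afterwards is formulated \(\dd A_t\)-almost everywhere with respect to one common \(A\). By fixing \(\bullet\) at the outset and taking \(A_t = (1\wedge|x|^2)\star\nu^\bullet_t\), you produce two generally different functions \(A^X\) and \(A^Y\), and you never reconcile them; ``the argument is identical for \(X\) and \(Y\)'' does not address this.

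The repair is short and is exactly what the paper does: once you have \(K^X \dd A^X = \nu^X\) and \(K^Y \dd A^Y = \nu^Y\), set \(A \triangleq A^X + A^Y\). Both \(\dd A^X\) and \(\dd A^Y\) are absolutely continuous with respect to \(\dd A\), so by the Radon--Nikodym theorem you may absorb the densities into the kernels, defining \(\widehat{K}^\bullet(t,\dd x) \triangleq \tfrac{\dd A^\bullet}{\dd A}(t)\,K^\bullet(t,\dd x)\), to obtain \(\widehat{K}^\bullet \dd A = \nu^\bullet\) with a common \(A\). The paper's two-line proof is precisely this Radon--Nikodym step, layered on top of the off-the-shelf disintegration from \cite{JS} rather than the explicit construction you wrote out.
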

\begin{proof}
	It is well-known that such a decomposition exists, see \cite[II.1.2, Theorem II.1.8]{JS}. That we can take the same \(A\) for both decompositions is a consequence of the Radon-Nikodym theorem.
\end{proof}
We write \(K^X \preceq_{\bullet} K^Y\) if \(\int f(x) K^X(t, \dd x) \leq \int f(x) K^Y(t, \dd x)\) for \(\dd A_t\)-a.a. \(t \in [0, \infty)\) and all Lipschitz continuous \(f \in \mathcal{F}^1_{\bullet}\) with \(|f(x)| \leq \textup{const. } |x|\). We stress that for Lipschitz continuous functions \(f\) the growth condition \(|f(x)| \leq \textup{const. } |x|\) is equivalent to \(f(0) = 0\).
\begin{remark}
	If \(\dd A_t\)-a.e. \(K^X(\cdot, \mathbb{R}^d) < \infty\) and \(K^X(\cdot, \mathbb{R}^d) = K^X(\cdot, \mathbb{R}^d)\), then \(K^X\) and \(K^Y\) are ordered if, and only if, for \(\dd A_t\)-a.a. \(t \in [0, \infty)\) the random variables with laws \(K^X(t, \mathbb{R}^d)^{-1} K^X(t, \dd x)\) and  \(K^Y(t, \mathbb{R}^d)^{-1} K^Y(t, \dd x)\) are ordered.
	Thus, in the one-dimensional case \(d = 1\), we have the following characterizations:
	\begin{align*}
	K^X \preceq_{st} K^Y \quad &\Longleftrightarrow \quad K^Y(t, (- \infty, x]) \leq K^X(t, (- \infty, x]) \\&\hspace{2.5cm}\text{ for all } x \in \mathbb{R} \text{ and } \dd A_t\text{-a.a. } t \in [0, \infty),\\
	K^X \preceq_{icx} K^Y \quad &\Longleftrightarrow \quad \int (y - x)_+ K^X(t, \dd y) \leq \int (y - x)_+K^Y(t, \dd y) \\&\hspace{2.5cm}\text{ for all } x \in \mathbb{R} \text{ and } \dd A_t\text{-a.a. } t \in [0, \infty),\\
	K^X \preceq_{cx} K^Y \quad &\Longleftrightarrow \quad \int (y - x)_+ K^X(t, \dd y) \leq \int (y - x)_+K^Y(t, \dd y) \\&\hspace{1.07cm}\text{ and } \int y K^X(t, \dd y) = \int y K^Y(t, \dd y) \\&\hspace{2.5cm}\text{ for all } x \in \mathbb{R} \text{ and } \dd A_t\text{-a.a. } t \in [0, \infty).
	\end{align*}
	These characterizations can be deduced from \cite[Theorems 1.2.8, 1.5.3 and 1.5.7]{muller2002comparison} together with the fact that the stochastic order \(\preceq_{st}\) is generated by the increasing functions of class \(C^2_b\), see \cite[Theorems 2.4.2, 2.5.5 and 3.3.10]{muller2002comparison}.
In Lemma \ref{lem: eq cond} below we will see a generalization of the first equivalence to cases where \(K^X(\cdot, \dd x)\) and \(K^Y(\cdot, \dd x)\) are not finite nor have the same mass.
\end{remark}

%

\begin{theorem}\label{theo: Main Uni}
	Let \(\bullet \in \{st, cx, icx\}\). It holds that
	\[K^X \preceq_{\bullet} K^Y\quad \Longrightarrow \quad X \preceq_{\bullet} Y.\]
	If \(X\) and \(Y\) are L\'evy processes, then also
	\[K^X \preceq_{\bullet} K^Y\quad \Longleftarrow \quad X \preceq_\bullet Y,\]
	where \(K^X\) and \(K^Y\) are the L\'evy measures.
\end{theorem}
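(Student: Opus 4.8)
The plan is to prove the forward implication $K^X \preceq_\bullet K^Y \Rightarrow X \preceq_\bullet Y$ by an interpolation formula for infinitely divisible laws, and the converse (in the Lévy case) by a small-time expansion of the one-dimensional marginals. For the forward direction, fix $m$, times $0 \le t_1 < \dots < t_m < \infty$ and $f \in \mathcal{F}^m_\bullet$. Arguing as in the proof of Proposition~\ref{prop: fts qlc ordnen} — using that $\preceq_{st}$ is generated by the increasing functions in $C^2_b$ and that every (increasing) convex function is an increasing pointwise limit of (increasing) Lipschitz convex functions (inf-convolutions) — it suffices to verify \eqref{so} for Lipschitz $f \in \mathcal{F}^m_\bullet$. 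The structural point is that $(X_{t_1}, \dots, X_{t_m})$ is an $\mathbb{R}^{d \cdot m}$-valued infinitely divisible random variable: by \cite[Theorem II.5.2]{JS} and the normalization $B^X - h(x) \star \nu^X = 0$, and since the finite-first-moment assumption lets one drop the truncation compensator, its L\'evy exponent is
\begin{align*}
u \longmapsto \int_{(0, t_m] \times \mathbb{R}^d} \Big( e^{\sqrt{-1} \langle u, \iota_s(x) \rangle} - 1 \Big) \, \nu^X(\dd s \times \dd x) , \qquad \iota_s(x) \triangleq \big( x \1_{\{s \le t_1\}}, \dots, x \1_{\{s \le t_m\}} \big) ;
\end{align*}
equivalently, it is infinitely divisible with vanishing Gaussian part and L\'evy measure $\Lambda^X$, the image of $\nu^X|_{(0, t_m]}$ under $(s, x) \mapsto \iota_s(x)$. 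The same holds for $Y$, with $\Lambda^Y$. Each $\iota_s$ is linear with non-negative block entries, so $f \circ \iota_s \in \mathcal{F}^1_\bullet$ whenever $f \in \mathcal{F}^{d \cdot m}_\bullet$.

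The crux is the following interpolation formula, which I would isolate as a lemma generalizing the formulas of \cite{houdre2002, Houdre1998} from $C^2_b$ to Lipschitz integrands. If $\mu_0, \mu_1$ are infinitely divisible on $\mathbb{R}^n$ with vanishing Gaussian part, with L\'evy measures $\lambda_0, \lambda_1$ satisfying $\int |w| (\lambda_0 + \lambda_1)(\dd w) < \infty$, and with drifts $\int h \, \dd\lambda_0$, $\int h \, \dd\lambda_1$, and if $\mu_\theta$ ($\theta \in [0,1]$) is infinitely divisible with L\'evy measure $\lambda_\theta \triangleq (1 - \theta) \lambda_0 + \theta \lambda_1$ and drift $\int h \, \dd\lambda_\theta$, then for every Lipschitz $g \colon \mathbb{R}^n \to \mathbb{R}$,
\begin{align*}
\int g \, \dd\mu_1 - \int g \, \dd\mu_0 = \int_0^1 \int \bigg( \int \big( g(z + w) - g(z) \big) \, (\lambda_1 - \lambda_0)(\dd w) \bigg) \mu_\theta(\dd z) \, \dd\theta .
\end{align*}
For $g \in C^2_b$ this is \cite{houdre2002, Houdre1998} after absorbing the truncation terms into the drift (licit because $\int |w| \, \dd\lambda_i < \infty$, so that the $\langle h(w), \nabla g \rangle$ terms and the drift cancel). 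For Lipschitz $g$ I would mollify, $g_\varepsilon \triangleq g \ast \rho_\varepsilon$ — this preserves the Lipschitz constant and, crucially for the application, monotonicity and convexity — and pass to the limit by dominated convergence: on the right-hand side $|g_\varepsilon(z + w) - g_\varepsilon(z)| \le \textup{Lip}(g) |w|$ with $\int |w| (\lambda_0 + \lambda_1)(\dd w) < \infty$ and each $\mu_\theta$ a probability measure, and on the left-hand side $g_\varepsilon$ has linear growth with $\int |z| (\mu_0 + \mu_1)(\dd z) < \infty$.

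Granting the lemma, apply it with $n = d \cdot m$, $\lambda_0 = \Lambda^X$, $\lambda_1 = \Lambda^Y$ (recall $\nu^X = K^X \dd A$ and $\nu^Y = K^Y \dd A$ with a common $A$) and $g = f$. The inner integral equals
\begin{align*}
\int_{(0, t_m]} \int_{\mathbb{R}^d} \big( f(z + \iota_s(x)) - f(z) \big) \, (K^Y - K^X)(s, \dd x) \, \dd A_s ,
\end{align*}
and since $x \mapsto f(z + \iota_s(x)) - f(z)$ is Lipschitz, vanishes at the origin, and inherits monotonicity resp.\ convexity resp.\ both from $f$, the hypothesis $K^X \preceq_\bullet K^Y$ makes the inner $\dd x$-integral non-negative for $\dd A_s$-a.a.\ $s$; hence the inner integral is non-negative, so $\int f \, \dd\mu_1 \ge \int f \, \dd\mu_0$, which is \eqref{so}, and $X \preceq_\bullet Y$ follows. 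For the converse it suffices to treat the one-dimensional marginals. Fix Lipschitz $f \in \mathcal{F}^1_\bullet$ with $f(0) = 0$; Dynkin's formula — valid for Lipschitz $f$ by the same mollification — gives $E[f(X_t)] = \int_0^t E[(L^X f)(X_s)] \, \dd s$, where $(L^X f)(y) \triangleq \int (f(y + x) - f(y)) \, \nu^X(\dd x)$ is bounded and continuous (using $\int |x| \, \dd\nu^X < \infty$). Since $X_s \to 0$ a.s.\ as $s \downarrow 0$, bounded convergence yields $\tfrac{1}{t} E[f(X_t)] \to (L^X f)(0) = \int f \, \dd\nu^X$, and likewise for $Y$. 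Dividing $E[f(X_t)] \le E[f(Y_t)]$ by $t$ and letting $t \downarrow 0$ gives $\int f \, \dd\nu^X \le \int f \, \dd\nu^Y$ for all such $f$, i.e.\ $K^X \preceq_\bullet K^Y$.

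The main obstacle is the interpolation lemma — precisely, extending it from $C^2_b$ to Lipschitz integrands while keeping the operator in the plain-increment form $g(z + w) - g(z)$, since it is exactly this form that renders $w \mapsto g(z + w) - g(z)$ an admissible test function in the definition of $K^X \preceq_\bullet K^Y$ and thereby lets the hypothesis bite; this rests on the finite-first-moment assumption, both to absorb the truncation/drift terms and to justify the dominated-convergence passage through the mollification. The remaining ingredients — the Fubini interchanges, integrability of $f$ against the interpolated measures, and Dynkin's formula for Lipschitz $f$ — are routine.
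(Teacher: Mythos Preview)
Your argument is correct and rests on the same analytic engine as the paper --- the interpolation formula for infinitely divisible laws, extended from $C^2_b$ to Lipschitz test functions by approximation, and (for the converse) a Dynkin-type short-time expansion. The structural difference is this: the paper first reduces the finite-dimensional order to ordering single increments $X_t - X_s \preceq_\bullet Y_t - Y_s$ via an induction lemma (its Lemma~\ref{lem: pointwise}, using closure of $\preceq_\bullet$ under identical and independent concentration and convolution), and only then invokes the interpolation formula on $\mathbb{R}^d$. You bypass that reduction by recognising $(X_{t_1},\dots,X_{t_m})$ directly as an infinitely divisible vector on $\mathbb{R}^{d\cdot m}$ with L\'evy measure the push-forward of $\nu^X|_{(0,t_m]}$ by $\iota$, and applying the interpolation formula once at that level; the linearity and coordinatewise non-negativity of each $\iota_s$ then transport $K^X \preceq_\bullet K^Y$ into non-negativity of the inner integral. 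Your route is more direct and avoids the structural induction; the paper's is more modular (Lemma~\ref{lem: pointwise} is reused later, e.g.\ in the proof of Theorem~\ref{theo: Approx 1}). One small technical caveat: mollifying an \emph{unbounded} Lipschitz $g$ yields a smooth function with bounded derivatives but linear growth, hence not in $C^2_b$, so the Houdr\'e formula as cited does not immediately apply; the paper closes this gap by a two-step approximation --- first truncating $g$ to a bounded Lipschitz function, then mollifying --- and you should do likewise (or argue separately that the $C^2_b$ formula extends to functions of linear growth with bounded first and second derivatives under the standing first-moment hypothesis).
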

\begin{proof}
	Let us start with the first claim.
The following is a version of \cite[Lemma 31]{bergenthum2007b}. 
\begin{lemma}\label{lem: pointwise}
If for all \(0 \leq s < t< \infty\) it holds that
	\begin{align}\label{eq: to show main1}
	X_t - X_s \preceq_\bullet Y_t - Y_s,
	\end{align}
	then \(X \preceq_{\bullet} Y\).
\end{lemma}
\begin{proof}
	We use an induction argument. Take \(0 \leq s < t < \infty\).
	Note that \((X_s, X_t) = (X_s, X_s) + (0, X_t - X_s)\) and \((Y_s, Y_t) = (Y_s, Y_s) + (0, Y_t - Y_s)\), which are sums of independent random variables due the independent increment property of \(X\) and \(Y\).
	Since the monotone and convex stochastic orders are closed w.r.t. identical concentration, see \cite[Theorems 3.3.10 and 3.4.4]{muller2002comparison}, \(X_s \preceq_{\bullet} Y_s\) implies \((X_s, X_s) \preceq_{\bullet} (Y_s, Y_s)\). 
Moreover, since all stochastic orders under consideration are closed w.r.t. independent concentration, see \cite[Theorems 3.3.10 and 3.4.4]{muller2002comparison}, \(X_{t}- X_s \preceq_{\bullet} Y_t - Y_s\) implies \((0, X_t - X_s) \preceq_{\bullet} (0, Y_t- Y_s)\).
Thus, by the convolution property, see again \cite[Theorems 3.3.10 and 3.4.4]{muller2002comparison}, \((X_s, X_t) \preceq_\bullet (Y_s, Y_t)\) follows.

Now, take \(0 \leq t_1 < ..., < t_n < \infty\).
We have 
\begin{align*}
(X_{t_1}, ..., X_{t_n}) &= (X_{t_1}, ..., X_{t_{n-1}}, X_{t_{n-1}}) + (0, ..., 0, X_{t_n} - X_{t_{n-1}}),\\
(Y_{t_1}, ..., Y_{t_n}) &= (Y_{t_1}, ..., Y_{t_{n-1}}, Y_{t_{n-1}}) + (0, ..., 0, Y_{t_n} - Y_{t_{n-1}}).
\end{align*}
By the independent increment property, the vectors on the right hand sides are independent.
Using the induction hypothesis and the same arguments as above concludes the proof.
\end{proof} 
	Thus, it suffices to show \eqref{eq: to show main1}.
	Our main tool is an interpolation formula in the spirit of \cite{Houdre1998}. A related formula was used in \cite{Bauerle2007} to prove a supermodular stochastic order for L\'evy processes.
	
	Let the process \(Z(\alpha)\) be a PII with characteristics \((\alpha h(x) \star \nu^{Y} + (1 - \alpha) h(x) \star \nu^{X}, 0, \alpha \nu^{Y} + (1- \alpha) \nu^{X})\), see \cite[Theorem II.5.2]{JS} for the existence, and
	set 
	\begin{align*}
	\mathcal{L}^\bullet_{s, t} f (x) \triangleq \int_s^t \int \left( f(x + y) - f(x)\right)K^{\bullet}(r,\dd y)\dd A_r
	\end{align*}
	for Lipschitz continuous functions \(f \colon \mathbb{R}^d \to \mathbb{R}\). The assumption \(|x| \star \nu^\bullet_t < \infty\) implies that \(\mathcal{L}^\bullet_{s,t} f\) is well-defined.
	\begin{lemma}\label{interpolation formula}
		For all Lipschitz continuous \(f \colon \mathbb{R}^d \to \mathbb{R}\) it holds that
		\begin{align*}
		E&\left[f\left(Y_t - Y_s\right)\right] - E\left[f\left(X_t - X_s\right)\right] 
		\\&\hspace{1cm}= \int_0^1 \int \left(\mathcal{L}^Y_{s, t} f (z) - \mathcal{L}^X_{s, t} f(z)\right) P(Z_t (\alpha) - Z_s(\alpha) \in \dd z) \dd \alpha.
		\end{align*}
	\end{lemma}
	\begin{proof}
		For \(f \in C^2_b(\mathbb{R}^d, \mathbb{R})\) the claim follows from \cite[Proposition 1]{Houdre1998}.
		The claim for Lipschitz continuous \(f\) follows by approximation: 
		First, we approximate \(f\) by bounded Lipschitz continuous functions. Set \(f_n \triangleq n\) on \(\{x \in \mathbb{R}^d \colon f(x) \geq n\}\), \(f_n \triangleq- n\) on \(\{x \in \mathbb{R}^d \colon f(x) \leq -n\}\) and \(f_n \triangleq f\) otherwise. It is routine to check that \(f_n\) is Lipschitz continuous with the same Lipschitz constant as \(f\). 
		Thus, for \(U \in \{X, Y\}\) we have \(|f_n(U_t - U_s)| \leq \textup{const. } (1 + |U_t - U_s|)\) and \(|f_n(z + y) - f_n(z)| \leq \textup{const. } |y|\), where both constants are independent of \(n\).
		Since, using our assumptions, \(|U_t - U_s|\) is integrable w.r.t. \(P\) and \(|y|\) is integrable w.r.t. \(\1_{[0, 1]}(\alpha) \1_{(s, t]}(r) K^U(r, \dd y) \dd A_r P(Z_t(\alpha) - Z_s(\alpha) \in \dd z) \dd \alpha\), 
		we can apply the dominated convergence theorem to obtain
		\begin{align}\label{approx1}
		\lim_{n \to \infty} E[f_n(U_t - U_s)] = E[f(U_t - U_s)]
		\end{align}
		and
		\begin{equation}\label{approx2}
		\begin{split}
		\lim_{n \to \infty} \int_0^1 &\int \mathcal{L}^U_{s, t} f_n (z)P(Z_t (\alpha) - Z_s(\alpha) \in \dd z) \dd \alpha 
		\\&= \int_0^1 \int \mathcal{L}^U_{s, t} f (z)P(Z_t (\alpha) - Z_s(\alpha) \in \dd z) \dd \alpha.
		\end{split}
		\end{equation}
		Hence, the claim holds for all bounded Lipschitz continuous function. We approximate a second time. Let \(\phi\) be the standard mollifier, i.e. \(\phi(x) = c \exp(  (|x|^2 - 1)^{-1}) \1_{\{|x| \leq 1\}}\), where \(c\) is a normalization constant, and set \(\phi_n (x) \triangleq n^{d} \phi(n x)\).
		Define \(f_n \triangleq f * \phi_n\), where \(*\) denotes the convolution.
		It is well-known that \(f_n \in C^\infty(\mathbb{R}^d, \mathbb{R})\), see, for instance, \cite[Theorem II.6.30]{rudin2006functional}. In particular, since \(f\) is bounded and \(\frac{\dd}{\dd x_i}\phi_n\) has compact support, we deduce from the formula \(\frac{\dd }{\dd x_i} (f * \phi_n) = f * \frac{\dd}{\dd x_i}\phi_n\), see, for instance, \cite[Theorem II.6.30]{rudin2006functional}, that \(f * \phi_n \in C^2_b(\mathbb{R}^d, \mathbb{R})\).
		It is routine to check that \(f * \phi_n\) is Lipschitz continuous with the same Lipschitz constant as \(f\). 
		Moreover, for all \(x \in \mathbb{R}^d\), we have 
		\begin{align*}
		\left| (f * \phi_n) (x) - f(x)\right| &\leq \int_{|z| \leq 1} \phi(z) \left| f(x) - f\left(x - \frac{z}{n}\right)\right| \dd z
		\\&\leq \textup{const. } \int_{|z| \leq 1}  \frac{\phi (z)|z|}{n} \dd z
		\leq \frac{1}{n} \xrightarrow{\quad n \to \infty\quad} 0,
		\end{align*}
		i.e. \(f_n \to f\) pointwise as \(n \to \infty\).
		Thus, as above, we conclude from the dominated convergence theorem that \eqref{approx1} and \eqref{approx2} hold. This finishes the proof. 
	\end{proof}
	
	For all \(f \in \mathcal{F}^1_\bullet\) and \(x \in \mathbb{R}^d\) we have \(f(x + \cdot) - f(x) \in \mathcal{F}^1_\bullet\).
	Thus, \(K^X \preceq_{\bullet} K^Y\) implies \(\mathcal{L}^Y_{s, t} f(z) - \mathcal{L}^Y_{s, t} f(z) \geq 0\) for all \(0 \leq s < t < \infty\), \(z \in \mathbb{R}^d\) and all Lipschitz continuous \(f\in \mathcal{F}^1_\bullet\). In particular, this yields \(E[f(X_t - X_s)] \leq E[f(Y_t - Y_s)]\) by Lemma \ref{interpolation formula}.
	
	We note that the stochastic order \(\preceq_{st}\) is generated by the increasing functions in \(C^2_b(\mathbb{R}^d, \mathbb{R})\).
	To see this note first that the stochastic order \(\preceq_{st}\) is generated by all increasing functions in \(C_b(\mathbb{R}^d, \mathbb{R})\), see \cite[Theorems 2.4.2 and 3.3.10]{muller2002comparison}. Then, applying a mollification argument to each of these functions yields the claim, see \cite[Theorem 2.5.5]{muller2002comparison} or the proof of Lemma \ref{interpolation formula}.
	Since all \(f \in C^2_b(\mathbb{R}^d, \mathbb{R})\) are Lipschitz continuous, we conclude \(X_t - X_s \preceq_{st} Y_t - Y_s\).
	
	To obtain the claim for \(\preceq_{(i)cx}\) we can use the fact that all \(f \in \mathcal{F}^1_{(i)cx}\) can be approximated by (increasing) convex Lipschitz continuous functions in a monotone manner, see the proof of Proposition \ref{prop: fts qlc ordnen}.	
	Hence, \(X_t - X_s \preceq_{(i)cx} Y_t - Y_s\) follows from the monotone convergence theorem.
	
	For the rest of the proof we assume that \(X\) and \(Y\) are L\'evy processes with L\'evy measures \(K^X\), \(K^Y\) respectively.
	\begin{lemma}
		For \(U \in \{X, Y\}\) and all Lipschitz continuous \(f\colon \mathbb{R}^d \to \mathbb{R}\) the process
		\begin{align*}
		f&(U_\cdot) - f(0) - \int_0^\cdot \int  \left(f(U_s + x) - f(U_s)\right) K^U(\dd x) \dd s
		\end{align*}
		is a martingale.
	\end{lemma}
	\begin{proof}
		The same approximation arguments as used in the proof of Lemma \ref{interpolation formula} yield that it suffices to show the claim for \(f \in C^2_b(\mathbb{R}^d, \mathbb{R})\).
		In this case, It\^o's formula yields that the process is a local martingale.
		Using that \(|f(x)| \leq \textup{const.} (1 + |x|)\) and \(|f(y + x) - f(y)| \leq \textup{const.} |x|\) yields
		\begin{align*}
		\sup_{s \in [0, t]}& \left| f(U_s) - f(0) - \int_0^s \int  \left(f(U_r + x) - f(U_r)\right) K^U(\dd x) \dd r\right|
		\\&\hspace{4.8cm}\leq\text{ const.} \left(1 + t + \sup_{s \in [0, t]} |U_s| \right),
		\end{align*}
		where the constant is uniform.
		Since \(U\) is a L\'evy process, \(\sup_{s \in [0, t]} |U_s|\) is integrable if \(\int |x - h(x)| K^U(\dd x) < \infty\), see \cite[Corollary 25.8, Theorem 25.18]{Sato99}. Hence, the martingale property follows from the dominated convergence theorem.
	\end{proof}
	Thanks to this observation and Fubini's theorem, for all Lipschitz continuous functions \(f\) with \(f(0) = 0\) it holds that
	\begin{align*}
	\lim_{t \downarrow 0}\frac{E[f(X_t)]}{t} &= \lim_{t \downarrow 0} \frac{1}{t} \int_0^t E \left[ \int \left(f (X_s + x) - f(X_s) \right)K^X(\dd x)\right]  \dd s 
	\\&= E\left[ \int \left(f(X_0 + x) - f(X_0)\right) K^X(\dd x)\right]
	= \int f(x) K^X(\dd x).
	\end{align*}
	Using the same arguments for \(X\) replaced by \(Y\), we obtain for all Lipschitz continuous \(f\) with \(f(0) = 0\) that
	\begin{align*}
	0 \leq \lim_{t \downarrow 0} \frac{E[f(Y_{t})] - E[f(X_{t})]}{t} &= \int f(x) K^Y(\dd x) - \int f(x) K^X(\dd x).
	\end{align*}
	This concludes the proof of Theorem \ref{theo: Main Uni}.
\end{proof}

For compound Poisson processes with equal jump intensity a related result was shown in \cite[Lemma 3.2]{bergenthum2007} with a different proof. By 
a modification of the L\'evy measure and approximation arguments, the conditions are generalized more general L\'evy processes, see \cite{bergenthum2007} for details.
In fact, Theorem \ref{theo: Main Uni} shows that the claims of \cite[Lemma 3.2]{bergenthum2007} hold for all finite variation pure-jump L\'evy processes without modifying the L\'evy measures.

Now, we will relax the integrability assumptions. To be precise, let \(X\) and \(Y\) be quasi-left continuous PIIs with characteristics \((B^X, C^X, \nu^X)\) and \((B^Y, C^Y, \nu^Y)\) such that \(|x - h(x)| \star \nu^\bullet_t < \infty\) for all \(t \in [0, \infty)\) and suppose that \(h\) is continuous.

Take a sequence \((G_n)_{n \in \mathbb{N}} \subseteq \mathbb{R}^d\) of Borel sets such that \(\1_{G_n}\) is vanishing in a neighborhood of the origin and \(\bigcup_{n \in \mathbb{N}} G_n \supseteq \mathbb{R}^d\backslash \{0\}\).
\begin{theorem}\label{theo: Approx 1}
	\begin{enumerate}
		\item[\textup{(i)}]
		Suppose that for all \(n \in \mathbb{N}\) and \(t \in [0, \infty)\) it holds that \(\1_{G_n}\cdot K^X \preceq_{st} \1_{G_n} \cdot K^Y, C^X = C^Y\) and
		\begin{align}\label{drift cond trunc}
		h(x) \1_{G_n}(x) \star \nu^{Y}_t - h(x)\1_{G_n}(x) \star \nu^{X}_t \leq B^{Y}_t - B^{X}_t.
		\end{align}
		Then \(X \preceq_{st} Y\).
		\item[\textup{(ii)}] Suppose that for all \(n \in \mathbb{N}\) and \(t \in [0, \infty)\) it holds that \(\1_{G_n}\cdot K^X \preceq_{icx} \1_{G_n}\cdot K^Y, C^Y_t - C^X_t\) is non-negative definite and \eqref{drift cond trunc} is satisfied.
		Then \(X \preceq_{icx} Y\). If, additionally, for all \(t \in [0, \infty)\)
		\begin{align}\label{eq: equal moment}
		B^X_t + |x - h(x)| \star \nu^X_t = B^Y_t + |x - h(x)| \star\nu^Y_t,
		\end{align}
		then \(X \preceq_{cx} Y\).
	\end{enumerate}
\end{theorem}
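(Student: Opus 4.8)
The plan is to establish both parts by an approximation argument that reduces them to Theorem~\ref{theo: Main Uni} and Proposition~\ref{prop: Gauss CPP}. Without loss of generality I assume that the sets $G_n$ increase to $\mathbb{R}^d \setminus \{0\}$, so that $\1_{G_n} \uparrow \1_{\mathbb{R}^d \setminus \{0\}}$ pointwise. For $n \in \mathbb{N}$ I introduce $X^{(n)}$, the quasi-left continuous PII obtained from $X$ by deleting the jumps outside $G_n$ together with their compensator:
\[
X^{(n)} \triangleq X - (x - h(x))\1_{\complement G_n} \star \mu^X - h(x)\1_{\complement G_n} \star (\mu^X - \nu^X),
\]
and I define $Y^{(n)}$ analogously; then $X^{(n)}$ has characteristics $(B^X, C^X, \1_{G_n} \cdot \nu^X)$ and $Y^{(n)}$ has characteristics $(B^Y, C^Y, \1_{G_n} \cdot \nu^Y)$. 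Since $\1_{G_n}$ vanishes in a neighborhood of the origin, the bound $|x| \1_{G_n} \le |x - h(x)| + |h(x)| \1_{G_n}$, together with $|x - h(x)| \star \nu^X_t < \infty$ and $\nu^X([0,t] \times G_n) < \infty$, shows $|x| \1_{G_n} \star \nu^X_t < \infty$ for all $t \in [0,\infty)$, and similarly for $\nu^Y$. Hence $X^{(n)}$ and $Y^{(n)}$ fall into the finite-first-moment, finite-variation-jump framework of Theorem~\ref{theo: Main Uni} and Proposition~\ref{prop: Gauss CPP}.

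The next step is to show that $X^{(n)} \preceq_\bullet Y^{(n)}$, where $\bullet = st$ in part (i) and $\bullet = icx$ in part (ii). Writing $X^{(n)} = (B^X - h(x)\1_{G_n} \star \nu^X + X^c) + x\1_{G_n} \star \mu^X$ and similarly for $Y^{(n)}$, Proposition~\ref{prop: Gauss CPP} reduces this to ordering the two summands separately. The pure-jump summand $x\1_{G_n} \star \mu^X$ is a finite-variation pure-jump PII whose generalized Lévy measure is $\1_{G_n} \cdot \nu^X$, hence with kernel $\1_{G_n} \cdot K^X$; since $\1_{G_n} \cdot K^X \preceq_\bullet \1_{G_n} \cdot K^Y$ by hypothesis, Theorem~\ref{theo: Main Uni} gives the $\preceq_\bullet$-ordering of the pure-jump summands. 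The summands $B^X - h(x)\1_{G_n} \star \nu^X + X^c$ and $B^Y - h(x)\1_{G_n} \star \nu^Y + Y^c$ are Gaussian PIIs with mean functions $B^X - h(x)\1_{G_n} \star \nu^X$ and $B^Y - h(x)\1_{G_n} \star \nu^Y$ and covariance functions $C^X$ and $C^Y$; condition~\eqref{drift cond trunc} says precisely that the mean functions are ordered componentwise, and together with $C^X = C^Y$ in part (i), respectively $C^Y_t - C^X_t$ non-negative definite for all $t$ in part (ii), the classical comparison criteria for Gaussian vectors (see \cite{muller2002comparison}) yield their $\preceq_{st}$-, respectively $\preceq_{icx}$-ordering. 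Combining the two summands gives $X^{(n)} \preceq_\bullet Y^{(n)}$.

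It then remains to pass to the limit $n \to \infty$. From $X_t - X^{(n)}_t = (x - h(x))\1_{\complement G_n} \star \mu^X_t + h(x)\1_{\complement G_n} \star (\mu^X - \nu^X)_t$, dominated convergence (using $|x - h(x)| \star \nu^X_t < \infty$ for the first term and $|h(x)|^2 \star \nu^X_t < \infty$ for the second) gives $X^{(n)}_t \to X_t$ in $L^1$, and likewise $Y^{(n)}_t \to Y_t$; in particular the finite-dimensional distributions converge. In part (i), since $\preceq_{st}$ is generated by the bounded continuous increasing functions, the inequalities $E[f(X^{(n)}_{t_1}, \dots, X^{(n)}_{t_m})] \le E[f(Y^{(n)}_{t_1}, \dots, Y^{(n)}_{t_m})]$ pass to the limit by weak convergence and give $X \preceq_{st} Y$. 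In part (ii) I would first reduce, as in the proof of Proposition~\ref{prop: fts qlc ordnen}, to Lipschitz continuous increasing convex test functions; these grow at most linearly, so the $L^1$-convergence of the marginals makes the families $\{f(X^{(n)}_{t_1}, \dots, X^{(n)}_{t_m})\}_n$ uniformly integrable, and the inequalities again pass to the limit, yielding $X \preceq_{icx} Y$. Finally, condition~\eqref{eq: equal moment} gives $E[X_t] = E[Y_t]$ for all $t$, and since every convex function is, up to a linear term and a truncation of its gradient dealt with by monotone convergence, an increasing convex function, $X \preceq_{icx} Y$ together with equality of the expectations upgrades to $X \preceq_{cx} Y$.

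I expect the passage to the limit in the convex cases to be the main obstacle. There the test functions are unbounded, so convergence in distribution alone does not suffice; one needs uniform integrability, and this is exactly why $X^{(n)}$ must be built on the same probability space as $X$, so that $X^{(n)}_t \to X_t$ holds in $L^1$ — the point at which the relaxed moment hypothesis $|x - h(x)| \star \nu^\bullet_t < \infty$ (rather than $|x| \star \nu^\bullet_t < \infty$) is used. Two further, more routine, points are the reduction of a general convex function to an increasing convex one plus a linear term in the $\preceq_{cx}$ step, and the justification that the approximating sets $G_n$ may be chosen increasing without spoiling the ordering hypotheses on the kernels.
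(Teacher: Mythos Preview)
Your approach is correct and follows the same overall strategy as the paper: truncate the jumps via $G_n$, order the truncated processes using Theorem~\ref{theo: Main Uni} and Proposition~\ref{prop: Gauss CPP}, then pass to the limit. The difference lies in how the limit is taken. The paper does not build $X^{(n)}$ on the same space as $X$; instead it invokes \cite[Theorem~VII.3.4]{JS} to get convergence in law of the truncated processes to $X$ and $Y$ (this is where the continuity of $h$ enters), and in part~(ii) it supplements weak convergence with the separate convergence of means $E[Z^\bullet(n)_t - Z^\bullet(n)_s] \to E[\bullet_t - \bullet_s]$ and then appeals to \cite[Theorem~3.4.6]{muller2002comparison}, which says that $\preceq_{icx}$ is preserved under weak convergence plus convergence of expectations. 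The paper also reduces first to increments via Lemma~\ref{lem: pointwise}, so all its limit arguments are one-dimensional, and the final $\preceq_{cx}$ step is the clean implication ``$\preceq_{icx}$ plus equal means $\Rightarrow$ $\preceq_{cx}$'' from \cite[Theorem~3.4.2]{muller2002comparison} applied to each increment. Your route, constructing $X^{(n)}$ on the same space and using $L^1$-convergence plus uniform integrability for Lipschitz convex test functions, is more elementary in that it avoids the functional weak-convergence machinery of \cite{JS} (and in particular does not need $h$ continuous), at the cost of the uniform-integrability bookkeeping you correctly flagged. Your caveat about taking $(G_n)$ increasing is genuine---it is not automatic that the drift condition~\eqref{drift cond trunc} survives taking unions---but the paper's weak-convergence argument tacitly needs the same monotonicity, so this is a shared loose end rather than a flaw specific to your proof.
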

\begin{proof}
	We start with some general observations.
	Define the truncated processes
	\begin{align}\label{Z trunc}
	Z^{\bullet}(n) \triangleq B^{\bullet} &+ h \1_{G_n} \star (\mu^\bullet - \nu^\bullet) 
	+ (x - h(x))  \1_{G_n} \star \mu^\bullet.
	\end{align}
	\begin{lemma}\label{lem: qlc}
		\(Z^{\bullet}(n)\) is a PII with characteristics \((B^{\bullet}, 0, \1_{G_n}\cdot \nu^{\bullet})\).
		Moreover, if \(Z^c\) is a Wiener process with covariance function \(C\) which is independent of \(Z^\bullet(n)\), then \(Z^\bullet (n) + Z^c\) is a PII with characteristics \((B^\bullet, C, \1_{G_n} \cdot \nu^\bullet)\).
	\end{lemma}
	\begin{proof}
		The first claim follows from \cite[Theorem II.5.10]{JS} and the second claim follows from \cite[Lemma II.2.44, Theorem II.5.2]{JS}.
	\end{proof}
	We pose ourselves in the setting of (i).
	Let \(Z^c\) be as in the previous lemma with \(C \triangleq C^X = C^Y\) and set \(\widehat{Z}^\bullet(n) \triangleq Z^{\bullet}(n) + Z^c\).
	By Lemma \ref{lem: pointwise}, it suffices to show that \(X_t - X_s \preceq_{st} Y_t - Y_s\) for all \(0 \leq s < t < \infty\).
	The same arguments as used in the proof of Theorem \ref{theo: Main Uni} together with Proposition \ref{prop: Gauss CPP} yields that \(\widehat{Z}^X(n)_t - \widehat{Z}^X(n)_s \preceq_{st} \widehat{Z}^Y(n)_t - \widehat{Z}^Y(n)_s\) for all \(n \in \mathbb{N}\). 
	It follows from \cite[Theorem VII.3.4]{JS} that \(\widehat{Z}^X(n)\) convergences in law to \(X\) and \(\widehat{Z}^Y (n)\) converges in law to \(Y\) as \(n \to \infty\).
	Since the stochastic order \(\preceq_{st}\) for \(\mathbb{R}^d\)-valued random variables is closed under weak convergence, see \cite[Theorem 3.3.10]{muller2002comparison}, we conclude that \(X_t - X_s \preceq_{st} Y_t - Y_s\). This proves \(X \preceq_{st} Y\) by Lemma \ref{lem: pointwise}.
	
	Next, we prove (ii). First, we do not assume \eqref{eq: equal moment}. Let \(Z^{\bullet, c}\) be a Wiener process with covariance function \(C^\bullet\) independent of \(Z^\bullet(n)\) for all \(n \in \mathbb{N}\) and set \(\widetilde{Z}^\bullet(n) \triangleq Z^\bullet(n) + Z^{\bullet, c}\). 
	The same arguments as used in the proof of Theorem \ref{theo: Main Uni} together with Proposition \ref{prop: Gauss CPP} yields that \(\widetilde{Z}^X(n)_t - \widetilde{Z}^X(n)_s \preceq_{icx} \widetilde{Z}^Y(n)_t - \widetilde{Z}^Y(n)_s\) for all \(n \in \mathbb{N}\).
	We note that \cite[Theorem VII.3.4]{JS} implies that \(\widetilde{Z}^X(n)\) convergences in law to \(X\) and \(\widetilde{Z}^Y (n)\) converges in law to \(Y\) as \(n \to \infty\). For \(U \in \{X, Y\}\) the dominated convergence theorem yields that
	\begin{align*}
E \left[ Z^U (n)_t - Z^U (n)_s \right] &= B^U_t - B^U_s + \int (x - h(x)) \1_{G_n} \nu^U((s, t] \times \dd x)
	\\&\xrightarrow{n \to \infty} B^U_t - B^U_s + \int (x - h(x)) \nu^U ((s, t] \times \dd x) 
	\\&= E[U_t - U_s].
	\end{align*}
	Hence, we conclude from \cite[Theorem 3.4.6]{muller2002comparison} that \(X_t - X_s \preceq_{icx} Y_t - Y_s\) and, therefore, \(X \preceq_{icx} Y\) by Lemma \ref{lem: pointwise}.
	
	Finally, suppose that \eqref{eq: equal moment} holds.
	In this case, \(E[X_t - X_s] = E[Y_t - Y_s]\) and we deduce from \cite[Theorem 3.4.2]{muller2002comparison} that \(X_t - X_s \preceq_{cx} Y_t - Y_s\). Again due to Lemma \ref{lem: pointwise}, this yields \(X \preceq_{cx} Y\). 
\end{proof}
For the stochastic order \(\preceq_{cx}\) it might be natural to assume that \(\1_{G_n}\cdot K^X \preceq_{cx} \1_{G_n} \cdot K^Y\). However, in this case \(\int_{G_n} x K^X(t, \dd x) = \int_{G_n} x K^Y(t, \dd x)\) has to hold. This seems to be a restrictive assumption on the sequence \((G_n)_{n \in \mathbb{N}}\). On the contrary, \eqref{eq: equal moment} is a necessary condition for \(X \preceq_{cx} Y\) such that we consider our conditions for \(\preceq_{cx}\) to be weaker.
\section{Explicit Conditions for Quasi-Left Continuous PIIs}\label{sec: M}
In this section we give explicit conditions and we present alternative proofs via coupling arguments.
\subsection{A Majorization Condition for the Monotone Stochastic Order}
We suppose that \(d = 1\). Let \(X\) and \(Y\) be quasi-left continuous PIIs with characteristics \((B^X, C, \nu^X)\) and \((B^Y, C, \nu^Y)\).

Our approach is based on a coupling constructed via the It\^o map which relates L\'evy measures to a reference L\'evy measure. 
The main result of this section is the following
\begin{theorem}\label{theo: Mnew}
	Assume that
	\begin{equation}\label{main coupl st}
	\begin{split}
	K^Y (t, (- \infty, x]) &\leq K^X(t, (- \infty, x]),\quad x < 0,\\
	K^X(t, [x, \infty)) &\leq K^Y(t, [x, \infty)),\quad x > 0,
	\end{split}
	\end{equation}
	for \(\dd A_t\)-a.a. \(t \in [0, \infty)\).
	Moreover, assume that for all \(t \in [0, \infty)\)
	\begin{align}
	|h(&x)| \star \nu^{X}_t + |h(x)| \star \nu^{Y}_t < \infty,\label{fv}\\
	h(x) &\star \nu^{Y}_t - h(x) \star \nu^{X}_t \leq B^{Y}_t - B^{X}_t.\label{drift cond new}
	\end{align}
	Then, \(X \preceq_{pst} Y\).
\end{theorem}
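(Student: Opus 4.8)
The plan is to prove $X \preceq_{pst} Y$ by exhibiting a coupling: two processes $\widetilde{X} \stackrel{\mathrm d}{=} X$ and $\widetilde{Y} \stackrel{\mathrm d}{=} Y$ on one probability space with $\widetilde{X}_t \le \widetilde{Y}_t$ for all $t \in [0,\infty)$ almost surely. Since $\preceq_{pst}$ is generated by the bounded functions in $\mathcal{F}_{pst}$ (as used in the proof of Proposition~\ref{prop: fts qlc ordnen}), this suffices: for bounded $f \in \mathcal{F}_{pst}$ one has $f(\widetilde{X}) \le f(\widetilde{Y})$ a.s.\ directly from the definition of $\mathcal{F}_{pst}$, hence $E[f(X)] = E[f(\widetilde{X})] \le E[f(\widetilde{Y})] = E[f(Y)]$.

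To build the coupling I would use the disintegration $\nu^\bullet(\dd t \times \dd x) = K^\bullet(t,\dd x)\,\dd A_t$ and, for $\dd A_t$-a.e.\ $t$, the quantile-type \emph{It\^o maps}
\[
q^{\bullet,+}_t(v) \triangleq \sup\{x > 0 \colon K^\bullet(t,[x,\infty)) > v\}, \qquad q^{\bullet,-}_t(v) \triangleq \inf\{x < 0 \colon K^\bullet(t,(-\infty,x]) > v\}, \quad v > 0,
\]
with $\sup\emptyset = \inf\emptyset = 0$. These are jointly Borel in $(t,v)$, and (using that $\nu^\bullet([0,t]\times\{|x|>\varepsilon\}) < \infty$ for every $\varepsilon>0$, so the tails are finite) the image of Lebesgue measure on $(0,\infty)$ under $q^{\bullet,+}_t$, resp.\ $q^{\bullet,-}_t$, is the restriction of $K^\bullet(t,\cdot)$ to $(0,\infty)$, resp.\ $(-\infty,0)$. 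The role of \eqref{main coupl st} is that it renders these maps pointwise ordered, $q^{X,+}_t \le q^{Y,+}_t$ and $q^{X,-}_t \le q^{Y,-}_t$ for $\dd A_t$-a.e.\ $t$: the superlevel sets of $x \mapsto K^X(t,[x,\infty))$ lie inside those of $x \mapsto K^Y(t,[x,\infty))$, and the superlevel sets of $x \mapsto K^Y(t,(-\infty,x])$ lie inside those of $x \mapsto K^X(t,(-\infty,x])$. Now put on one probability space independent objects $W, M^+, M^-$, where $W$ is a Wiener process with the common covariance function $C$ and $M^\pm$ are Poisson random measures on $[0,\infty)\times(0,\infty)$ with intensity $\dd A_t \otimes \dd v$. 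Pushing $M^+$ and $M^-$ forward under $(t,v) \mapsto (t, q^{\bullet,+}_t(v))$ and $(t,v) \mapsto (t, q^{\bullet,-}_t(v))$ respectively and summing, the mapping and superposition theorems for Poisson random measures together with the image-measure property yield a Poisson random measure $\mu^\bullet$ with intensity $\nu^\bullet$ (noting that $\nu^\bullet$ does not charge $\{0\}$, so the vanishing marks are harmless). Assumption \eqref{fv}, together with finiteness of $\nu^\bullet$ away from the origin, guarantees that $x \star \mu^\bullet_t$ converges absolutely a.s.\ and that $h(x)\star(\mu^\bullet - \nu^\bullet)$ is well-defined, so
\[
\widetilde{U}^\bullet \triangleq \big( B^\bullet - h(x)\star\nu^\bullet\big) + W + x \star \mu^\bullet = B^\bullet + W + h(x)\star(\mu^\bullet - \nu^\bullet) + (x - h(x))\star\mu^\bullet
\]
is a PII with characteristics $(B^\bullet, C, \nu^\bullet)$ by \cite[Theorem II.5.2]{JS}; hence $\widetilde{X} \triangleq \widetilde{U}^X \stackrel{\mathrm d}{=} X$ and $\widetilde{Y} \triangleq \widetilde{U}^Y \stackrel{\mathrm d}{=} Y$.

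Finally, because $\widetilde{X}$ and $\widetilde{Y}$ carry the \emph{same} Gaussian part $W$, on the a.s.\ event where both jump sums converge absolutely for all $t$ one has
\[
\widetilde{Y}_t - \widetilde{X}_t = \Big[\big(B^Y_t - h(x)\star\nu^Y_t\big) - \big(B^X_t - h(x)\star\nu^X_t\big)\Big] + \sum_{\sigma \in \{+,-\}}\ \sum_{(s,v)\ \text{atom of}\ M^\sigma,\ s \le t} \big(q^{Y,\sigma}_s(v) - q^{X,\sigma}_s(v)\big),
\]
where the bracket is non-negative by \eqref{drift cond new} and every summand is non-negative by the pointwise ordering of the It\^o maps; thus $\widetilde{X}_t \le \widetilde{Y}_t$ for all $t$, which completes the coupling and the proof. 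I expect the main obstacle to be the technical bookkeeping in the second paragraph: making the It\^o maps jointly measurable with the correct image measures, and combining \eqref{fv} with the standard integrability of a PII characteristic to ensure absolute convergence of the jump integrals — so that $\widetilde{U}^\bullet$ is genuinely a version of the PII with characteristics $(B^\bullet, C, \nu^\bullet)$ and the term-by-term comparison of jumps is legitimate.
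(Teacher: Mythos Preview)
Your proposal is correct and follows essentially the same coupling strategy as the paper: both construct copies of $X$ and $Y$ by driving them with a common Gaussian part and a common Poisson source, transformed through quantile-type It\^o maps whose pointwise ordering (a direct consequence of \eqref{main coupl st}) yields the pathwise inequality, with \eqref{fv} allowing the compensated jump integrals to be rewritten as absolutely convergent sums so that \eqref{drift cond new} controls the drift difference. The only cosmetic difference is that the paper uses a single Poisson random measure with intensity $\dd A_t \otimes |x|^{-2}\dd x$ on $[0,\infty)\times\mathbb{R}$ and a single It\^o map $\rho^\bullet(t,x)$ handling both signs, whereas you split into two Poisson random measures on $[0,\infty)\times(0,\infty)$ with Lebesgue intensity and two maps $q^{\bullet,\pm}_t$; the two parametrizations are related by the change of variable $v = 1/|x|$.
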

The stochastic order \(\preceq_{pst}\) is stronger than the stochastic order \(\preceq_{st}\). In this regard, Theorem \ref{theo: Mnew} brings a new condition. 
\\\\
\begin{proof}
	We show that there exists a probability space which supports copies of \(X\) and \(Y\) such that a.s. \(X_t \leq Y_t\) for all \(t \in [0, \infty)\). 
	This clearly implies \(X \preceq_{pst}Y\).
	
	
	Let \(F\) be the L\'evy measure of a 1-stable L\'evy process, i.e. \(F(\dd x) = \frac{1}{|x|^{2}}\dd x\),
	and set 
	\begin{align}\label{ito map}
	\rho^\bullet (t, x) \triangleq \begin{cases}
	\sup \left( y \in [0, \infty)\colon K^\bullet (t, [y, \infty)) \geq \frac{1}{|x|}\right),&x > 0,\\
	0,&x = 0,\\
	- \sup \left( y \in [0, \infty)\colon K^\bullet (t, (- \infty, -y]) \geq \frac{1}{|x|}\right),&x < 0.
	\end{cases}
	\end{align}
	Adapting the terminology in \cite{stroock2010}, the function \(\rho^\bullet\) is called It\^o map.
	Now,
	\begin{align}\label{eq: ito}
	K^\bullet (t, G) =  \int \1_G (\rho^\bullet(t, x)) F(\dd x),\quad G \in \mathscr{B}(\mathbb{R}),
	\end{align}
	see \cite[Theorem 9.2.4]{stroock2010}. Set \(\nu^L(\dd t \times \dd x) \triangleq \dd A_t F(\dd x)\).
	\begin{lemma}\label{lemma L}
		There exists a PII \(L\) with characteristics \((0, 0, \nu^L)\).
	\end{lemma}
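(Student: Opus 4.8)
The plan is to invoke the one-to-one correspondence between laws of PIIs and characteristic triplets, \cite[Theorem II.5.2]{JS}, exactly the device already used in this paper for the definition of $Z(\alpha)$ and in the proof of Lemma~\ref{lem: deco PII}. By that result it suffices to check that the triplet $(0, 0, \nu^L)$ satisfies conditions \cite[II.5.3 - II.5.5]{JS} with respect to the fixed truncation function $h$. The requirements on the first and second characteristic are immediate, since both vanish identically; in particular $\Delta B \equiv 0$, which is consistent with $\nu^L$ carrying no mass on time-fibres $\{t\}\times\mathbb{R}$, as recorded below.

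The content of the verification lies in the third characteristic $\nu^L(\dd t \times \dd x) = \dd A_t\, F(\dd x)$. First I would record that $F$ is a genuine Lévy measure on $\mathbb{R}$: splitting the integral at $|x| = 1$,
\[
\int_{\mathbb{R}} \bigl(|x|^2 \wedge 1\bigr) F(\dd x) = \int_{\{|x| \leq 1\}} \dd x + \int_{\{|x| > 1\}} \frac{\dd x}{|x|^2} = 2 + 2 = 4 < \infty .
\]
Since $A$ is increasing and finite on bounded intervals, this yields $(|x|^2 \wedge 1) \star \nu^L_t = 4\,A_t < \infty$ for every $t \in [0, \infty)$, which is the required integrability of the third characteristic. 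Moreover, because $A$ is continuous it puts no mass on singletons, so $\nu^L(\{t\} \times \mathbb{R}) = 0$ for every $t$, i.e.\ $\nu^L$ has no fixed time of discontinuity; and $\nu^L([0,\infty) \times \{0\}) = 0$, since $F$ is absolutely continuous with respect to Lebesgue measure. These are precisely the remaining conditions on $\nu^L$ in \cite[II.5.3 - II.5.5]{JS}.

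With all conditions verified, \cite[Theorem II.5.2]{JS} produces a PII $L$ with characteristics $(0, 0, \nu^L)$, which is the assertion of the lemma; since $A$ is continuous, $L$ is moreover quasi-left continuous, and one may think of it concretely as a symmetric $1$-stable process time-changed by $A$. As for difficulties: there is no conceptual obstacle here, this being a routine existence statement. The only point demanding a little care is the integrability bound $(|x|^2 \wedge 1) \star \nu^L_t < \infty$, which is the defining property of a Lévy measure and which the borderline density $F(\dd x) = |x|^{-2}\,\dd x$ of the $1$-stable process just barely meets; here it matters that near the origin the tails are tested against $|x|^2 \wedge 1$ rather than $|x|$ — indeed $|h(x)| \star \nu^L_t = \infty$ whenever $A_t > 0$ — and that $A$ does not explode on bounded time intervals.
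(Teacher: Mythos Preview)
Your proposal is correct and follows exactly the paper's approach: the paper's proof consists of the single sentence ``This follows readily from \cite[Theorem II.5.2]{JS},'' and you have simply spelled out the verification of conditions \cite[II.5.3--II.5.5]{JS} that the paper leaves implicit. The details you supply (integrability of $|x|^2\wedge 1$ against $F$, continuity of $A$ forcing $\nu^L(\{t\}\times\mathbb{R})=0$, and absolute continuity of $F$ forcing no mass at the origin) are all accurate.
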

	\begin{proof}
		This follows readily from \cite[Theorem II.5.2]{JS}.
	\end{proof}
	Let \(Z^c\) be a Wiener process with variance function \(C\). We set 
	\begin{align}\label{Z bullet}
	Z^\bullet \triangleq B^{\bullet} + Z^c + h(\rho^\bullet) \star (\mu^L - \nu^L) + (\rho^\bullet - h(\rho^\bullet)) \star \mu^L.
	\end{align}
	\begin{lemma}
		The process \(Z^X\) is well-defined and has the same law as \(X\). Moreover, the process \(Z^Y\) is well-defined and has the same law as \(Y\).
	\end{lemma}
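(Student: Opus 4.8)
The plan is to identify \(Z^\bullet\) as a PII whose characteristics relative to the truncation function \(h\) are \((B^\bullet, C, \nu^\bullet)\), and then to invoke the one-to-one correspondence between PIIs and their characteristics \cite[Theorem II.5.2]{JS}: this immediately forces \(Z^X\) to have the law of \(X\) and \(Z^Y\) the law of \(Y\).

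First I would check that the two jump terms in \eqref{Z bullet} are well-defined. By the change-of-variables identity \eqref{eq: ito},
\[
|h(\rho^\bullet)| \star \nu^L_t = \int_0^t\!\!\int |h(\rho^\bullet(s,x))|\, F(\dd x)\, \dd A_s = \int_0^t\!\!\int |h(y)|\, K^\bullet(s, \dd y)\, \dd A_s = |h(x)| \star \nu^\bullet_t < \infty
\]
by \eqref{fv}, so \(h(\rho^\bullet) \star \mu^L\) and \(h(\rho^\bullet) \star \nu^L\) are well-defined finite-variation processes and \(h(\rho^\bullet) \star (\mu^L - \nu^L)\) is their martingale difference. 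For the large-jump term I would fix \(\epsilon > 0\) with \(h(y) = y\) on \([-\epsilon, \epsilon]\); then \(\rho^\bullet(s,x) - h(\rho^\bullet(s,x))\) vanishes unless \(|\rho^\bullet(s,x)| > \epsilon\), and by \eqref{eq: ito} the \(\nu^L\)-mass of \(\{(s,x) : s \le t,\ |\rho^\bullet(s,x)| > \epsilon\}\) equals \(\nu^\bullet((0,t]\times\{|x|>\epsilon\})\), which is finite since \((|x|^2 \wedge 1) \star \nu^\bullet_t < \infty\). Hence a.s.\ only finitely many jumps of \(L\) contribute and \((\rho^\bullet - h(\rho^\bullet)) \star \mu^L\) is an a.s.\ finite sum; so \(Z^\bullet\) is a well-defined semimartingale.

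Next I would compute its characteristics. Since \(Z^c\) is continuous and \(\Delta Z^\bullet_s = \rho^\bullet(s, \Delta L_s)\) (with \(\rho^\bullet(s,0) = 0\)), the jump measure \(\mu^{Z^\bullet}\) of \(Z^\bullet\) is the image of \(\mu^L\) under the deterministic Borel map \((s,x) \mapsto (s, \rho^\bullet(s,x))\); testing against nonnegative predictable integrands and using the defining property of the compensator shows its compensator is the image of \(\nu^L\) under the same map, which by \eqref{eq: ito} is \(\nu^\bullet\). Then \(h(\rho^\bullet) \star (\mu^L - \nu^L) = h(x) \star (\mu^{Z^\bullet} - \nu^{Z^\bullet})\) and \((\rho^\bullet - h(\rho^\bullet)) \star \mu^L = (x - h(x)) \star \mu^{Z^\bullet}\), whence
\[
Z^\bullet = B^\bullet + Z^c + h(x) \star (\mu^{Z^\bullet} - \nu^{Z^\bullet}) + (x - h(x)) \star \mu^{Z^\bullet}.
\]
Reading off the canonical decomposition gives first characteristic \(B^\bullet\) (here continuous, as \(X\) and \(Y\) are quasi-left continuous), continuous martingale part \(Z^c\) — the two jump integrals are purely discontinuous and \(B^\bullet\) is deterministic — with \(\langle Z^c \rangle = C\), and third characteristic \(\nu^\bullet\). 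Since these are deterministic, \(Z^\bullet\) is a PII \cite[Theorem II.5.2]{JS} with the characteristics, hence the law, of \(X\) when \(\bullet = X\) and of \(Y\) when \(\bullet = Y\).

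I expect the main obstacle to be the rigorous proof that the image random measure \(\mu^{Z^\bullet}\) has compensator equal to the image of \(\nu^L\): one must check that the It\^o map leaves the time coordinate untouched, that \(\rho^\bullet\) is jointly Borel so that composing a predictable integrand with it stays predictable, and that the integrability needed to \emph{identify} (rather than merely bound) the compensator holds after a suitable localization — all of which rest on \eqref{fv}, \eqref{eq: ito} and the standing integrability \((|x|^2 \wedge 1) \star \nu^\bullet_t < \infty\). The remaining steps are routine manipulations with the canonical semimartingale decomposition.
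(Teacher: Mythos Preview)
Your proposal is correct and follows essentially the same route as the paper: both identify the jump measure of \(Z^\bullet\) as the pushforward of \(\mu^L\) under the It\^o map, use \eqref{eq: ito} to see that its compensator is \(\nu^\bullet\), read off the characteristics \((B^\bullet, C, \nu^\bullet)\), and finish with \cite[Theorem II.5.2]{JS}. The only difference is cosmetic: the paper packages the characteristic computation into a single citation of \cite[Theorem II.5.10]{JS}, whereas you unfold the canonical decomposition by hand.
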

	\begin{proof}
		To establish that \(Z^\bullet\) is well-defined, it suffices to verify that \(\mu^{Z^{\bullet}} ([0, t] \times G) \triangleq \int_0^t \int \1_G (\rho^\bullet (s, x)) \mu^L(\dd s \times \dd x)\), where \(G \in \mathscr{B}(\mathbb{R})\), is a random measure of jumps with compensator \(\nu^{\bullet}\). Since \(\mu^L\) is an optional random measure, so is \(\mu^{Z^\bullet}\). It remains to show that \(\mu^{Z^\bullet}\) is \(\widetilde{\mathscr{P}}\)-\(\sigma\)-finite.
		Consider the set \(G_n \triangleq \{x \in \mathbb{R} \colon |x| > \frac{1}{n}\}\). Now, \(E[\mu^{Z^\bullet}([0, t] \times G_n)] = \nu^{\bullet} ([0, t] \times G_n) < \infty\), see \cite[II.5.5 (i)]{JS}.
		Hence, \(Z^\bullet\) is well-defined. Moreover, it follows readily from \cite[Theorem II.5.10]{JS} that \(Z^\bullet\) is a PII with characteristics \((B^{\bullet}, 0, \nu^{\bullet})\). Therefore, the equality of the laws follows from \cite[Theorem II.5.2]{JS}.
	\end{proof}
	Using \eqref{fv}, we compute that for all \(t \in [0, \infty)\)
	\begin{align*}
	Z^Y_t - Z^X_t &= B^{Y}_t - B^{X}_t - h(x) \star \nu^{Y}_t + h(x) \star \nu^{X}_t + \left(\rho^Y - \rho^X\right) \star \mu^L.
	\end{align*}
	Our assumption \eqref{main coupl st} implies that \(\rho^X\leq \rho^Y\).
	Hence, using \eqref{drift cond new} we obtain \(Z^Y_t \geq Z^X_t\) for all \(t \in [0, \infty)\).
	This concludes the proof of Theorem \ref{theo: Mnew}.
\end{proof}
In view of \cite[Corollary II.5.13]{JS}, the finite variation condition \eqref{fv} and the drift condition \eqref{drift cond new} are independent of the choice of \(h\).

Let us shortly comment on the assumptions of Theorem \ref{theo: Mnew}. 
\begin{proposition}\label{lem: eq cond}
	If \(|x| \star \nu^\bullet_t < \infty\) for all \(t \in [0, \infty)\), then \eqref{main coupl st} holds for \(\dd A_t\)-a.a. \(t \in [0, \infty)\) if, and only if, the order \(K^X\preceq_{st} K^Y\) holds. 
\end{proposition}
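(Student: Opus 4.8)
The plan is to reduce both conditions to the layer-cake (integration-by-parts) representation of $\int f(y)\,K^\bullet(t,\dd y)$ for Lipschitz functions $f$ vanishing at the origin, after which the equivalence becomes a matter of the sign of $f'$. By hypothesis $|x|\star\nu^\bullet_t<\infty$ for every $t$, so there is a $\dd A_t$-null set outside of which both $K^X(t,\cdot)$ and $K^Y(t,\cdot)$ have finite first absolute moment; for such $t$ the tails $K^\bullet(t,[x,\infty))$ with $x>0$ and $K^\bullet(t,(-\infty,x])$ with $x<0$ are finite, and every Lipschitz $f$ with $f(0)=0$ lies in $L^1(K^\bullet(t,\cdot))$ since $|f(y)|\le(\operatorname{Lip}f)\,|y|$. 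As $f$ is absolutely continuous we may write $f(y)=\int_0^\infty f'(u)\,\1_{(u,\infty)}(y)\,\dd u-\int_{-\infty}^0 f'(u)\,\1_{(-\infty,u]}(y)\,\dd u$, and Fubini's theorem --- applicable because $\int_0^\infty K^\bullet(t,(u,\infty))\,\dd u=\int x_+\,K^\bullet(t,\dd x)<\infty$ and likewise on $(-\infty,0]$ --- gives
\begin{align*}
\int f(y)\,K^\bullet(t,\dd y) &= \int_0^\infty f'(u)\,K^\bullet(t,[u,\infty))\,\dd u\\
&\qquad-\int_{-\infty}^0 f'(u)\,K^\bullet(t,(-\infty,u])\,\dd u
\end{align*}
for $\dd A_t$-a.a. $t$ and all admissible $f$, where $[u,\infty)$ may replace $(u,\infty)$ since they differ only on a Lebesgue-null set of $u$.

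Granting this, the implication from \eqref{main coupl st} to $K^X\preceq_{st}K^Y$ is immediate: $f$ increasing gives $f'\ge0$ Lebesgue-a.e., and the tail inequalities of \eqref{main coupl st} --- which hold for every $x$ for $\dd A_t$-a.a. $t$ --- then force $\int f(y)\,K^X(t,\dd y)\le\int f(y)\,K^Y(t,\dd y)$ for $\dd A_t$-a.a. $t$ and every admissible $f$.

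For the converse I would approximate tail indicators by admissible functions. For a rational $x>0$ put $g_{x,n}(y)\triangleq 1\wedge(n(y-x)+1)_+$; for $n>1/x$ this is a bounded Lipschitz increasing function vanishing at $0$, hence admissible, it decreases pointwise to $\1_{[x,\infty)}$, and for large $n$ it is dominated by $\1_{[x/2,\infty)}$, which is $K^\bullet(t,\cdot)$-integrable because the measure is finite away from the origin. Dominated convergence then turns $\int g_{x,n}(y)\,K^X(t,\dd y)\le\int g_{x,n}(y)\,K^Y(t,\dd y)$ into $K^X(t,[x,\infty))\le K^Y(t,[x,\infty))$; with $-(1\wedge(n(x-y)+1)_+)$ in place of $g_{x,n}$ one obtains $K^Y(t,(-\infty,x])\le K^X(t,(-\infty,x])$ for rational $x<0$. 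Since only countably many test functions occur, one $\dd A_t$-null set serves all of them, and the monotonicity together with one-sided continuity of the maps $x\mapsto K^\bullet(t,[x,\infty))$ on $(0,\infty)$ and $x\mapsto K^\bullet(t,(-\infty,x])$ on $(-\infty,0)$ propagates the inequalities from rational to arbitrary real levels, which is \eqref{main coupl st}.

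I do not expect a conceptual obstacle; the delicate points are the limiting arguments when $K^\bullet(t,\cdot)$ carries infinite mass near the origin --- handled by observing that the tail-approximating test functions are supported away from $0$ as soon as $n$ is large --- and the bookkeeping of the $\dd A_t$-a.a. quantifiers so that a single exceptional null set works for every tail level, which a countable dense family of levels plus monotonicity delivers.
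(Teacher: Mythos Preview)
Your argument is correct. The converse direction is essentially the same as the paper's --- approximate the tail indicators by admissible Lipschitz increasing functions vanishing at the origin and pass to the limit --- though you use explicit piecewise-linear ramps where the paper uses inf-convolutions, and you are a bit more careful with the $\dd A_t$-a.a.\ bookkeeping (countable family of test functions plus monotonicity).

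The forward direction, however, is handled by a genuinely different device. The paper exploits the It\^o map $\rho^\bullet$ already constructed for Theorem~\ref{theo: Mnew}: since \eqref{main coupl st} gives $\rho^X(t,\cdot)\le\rho^Y(t,\cdot)$ pointwise, one simply writes $\int f\,\dd K^\bullet(t,\cdot)=\int f(\rho^\bullet(t,x))\,F(\dd x)$ and reads off the inequality from monotonicity of $f$. Your layer-cake representation replaces this coupling-by-quantile-transform with integration by parts, which is more self-contained and avoids referencing the auxiliary $1$-stable reference measure $F$. The paper's route has the economy of reusing an object built for the main theorem; yours has the virtue of being entirely elementary and of making the role of the integrability hypothesis $\int|x|\,K^\bullet(t,\dd x)<\infty$ completely transparent (it is precisely what makes the layer-cake integrals finite and Fubini legitimate).
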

\begin{proof}
	Denote \(F, \rho^X\) and \(\rho^Y\) as in the proof of Theorem \ref{theo: Main Uni}, let \(f \in \mathcal{F}^1_{st}\) such that \(|f(x)| \leq \textup{const. } |x|\) and suppose that \eqref{main coupl st} holds.
	Now, recalling \eqref{eq: ito}, for \(\dd A_t\)-a.a. \(t \in [0, \infty)\)
	\begin{align*}
	\int f(x) K^X(t, \dd x) &= \int f(\rho^X(t, x)) F(\dd x) 
	\\&\leq \int f(\rho^Y(t, x)) F(\dd x) = \int f(x) K^Y(t, \dd x).
	\end{align*}
	In other words, \(K^X\preceq_{st}K^Y\) holds.

	For the converse direction, we approximate.
	 Namely, consider \(f(y)\triangleq - \1_{(- \infty, x]}(y)\) for some \(x > 0\) and set \(f_n\) to be the inf-convolution of \(f\), i.e. \(f_n (y) \triangleq \inf_{z \in \mathbb{R}} (f(z) + n |y - z|)\). 
Since \(f\) is a bounded lower semi-continuous function, by \cite[Lemma 1.3.5]{dupuis2011weak} (see also the proof), the inf-convolution \(f_n\) is Lipschitz continuous, \(f_n(y) \leq f_{n+1}(y) \leq f(y)\) and \(f_n \to f\) pointwise as \(n \to \infty\). Moreover, \(f_n\) is increasing as \(f\) is increasing (see \eqref{eq: inf-conv increasing}) and \(f_n(0) = \inf_{z \in \mathbb{R}} (f(z) + n |z|) = 0\) for \(n\geq |x|^{-1}\).
Thus, using the monotone convergence theorem, \(K^X \preceq_{st} K^Y\) implies the first part of \eqref{main coupl st}.
The second part follows in the same manner.
\end{proof}



So far our conditions for the stochastic order \(\preceq_{pst}\) apply to PIIs with discontinuous parts of finite variation. Similarly to Theorem \ref{theo: Approx 1}, we can relax this assumption using the fact that \(\preceq_{pst}\) is closed under weak convergence. Take a sequence \((G_n)_{n \in \mathbb{N}} \subseteq \mathbb{R}\) of Borel sets such that \(\1_{G_n}\) is vanishing in a neighborhood of the origin and \(\bigcup_{n \in \mathbb{N}} G_n \supseteq \mathbb{R}\backslash \{0\}\).
\begin{theorem}\label{prop: M1}
	Let \(h\) be continuous and suppose that for all \(n \in \mathbb{N}\) and \(\dd A_t\)-a.a. \(t \in [0, \infty)\) the condition \eqref{main coupl st} holds with \(K^X\) replaced by \(\1_{G_n} \cdot K^X\) and \(K^Y\) replaced by \(\1_{G_n} \cdot K^Y\), and that for all \(n \in \mathbb{N}\) and \(t \in [0, \infty)\) the inequality \eqref{drift cond trunc} holds.
	Then \(X \preceq_{pst} Y\).
\end{theorem}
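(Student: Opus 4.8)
The plan is to reduce to the finite–variation situation of Theorem \ref{theo: Mnew} by truncating the jump measures away from the origin, exactly as in the proof of Theorem \ref{theo: Approx 1}, and then to pass to the limit using that \(\preceq_{pst}\) is stable under weak convergence.

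Concretely, I would fix the kernel decomposition \(K^\bullet(t,\dd x)\,\dd A_t = \nu^\bullet(\dd t\times \dd x)\) and, for each \(n\in\mathbb{N}\), introduce a quasi–left continuous PII \(\widehat{Z}^\bullet(n)\) with characteristics \((B^\bullet, C, \1_{G_n}\cdot\nu^\bullet)\). Such a process exists by \cite[Theorem II.5.2]{JS}, and it can be realized concretely as the process \(Z^\bullet(n)\) of \eqref{Z trunc} plus an independent Wiener process with variance function \(C\), cf.\ Lemma \ref{lem: qlc}; it is quasi–left continuous since \(B^\bullet\) and \(C\) are continuous and \(\nu^\bullet\) charges no fixed time. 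Because \(\1_{G_n}\cdot\nu^\bullet\) is decomposed by the kernel \(\1_{G_n}(x)K^\bullet(t,\dd x)\) with respect to the same \(A\), the data entering Theorem \ref{theo: Mnew} for the pair \(\widehat{Z}^X(n),\widehat{Z}^Y(n)\) are precisely the truncated ones. I would then check the three hypotheses of Theorem \ref{theo: Mnew}: condition \eqref{main coupl st} for \(\1_{G_n}\cdot K^X\) and \(\1_{G_n}\cdot K^Y\) holds by assumption; the finite–variation condition \eqref{fv} holds because \(h\) is bounded and \(\1_{G_n}\) vanishes on some \(\{|x|<\varepsilon_n\}\), so that \(\nu^\bullet([0,t]\times G_n)\le(\varepsilon_n^2\wedge1)^{-1}(|x|^2\wedge1)\star\nu^\bullet_t<\infty\) by \cite[II.5.5]{JS}; and condition \eqref{drift cond new} for \(\widehat{Z}^X(n),\widehat{Z}^Y(n)\) is exactly \eqref{drift cond trunc}. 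Hence Theorem \ref{theo: Mnew} yields \(\widehat{Z}^X(n)\preceq_{pst}\widehat{Z}^Y(n)\) for every \(n\).

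It remains to let \(n\to\infty\). By the same argument as in the proof of Theorem \ref{theo: Approx 1} — the first characteristic is constant in \(n\), the modified second characteristic \(C+\1_{G_n}\,h^2\star\nu^\bullet\) converges, and \(g\,\1_{G_n}\star\nu^\bullet\) converges for bounded continuous \(g\) vanishing near \(0\), all by dominated convergence, so that \cite[Theorem VII.3.4]{JS} applies — one obtains \(\widehat{Z}^X(n)\Rightarrow X\) and \(\widehat{Z}^Y(n)\Rightarrow Y\) in law on the Skorokhod space; this is the step where continuity of \(h\) is used. Since the pointwise order \(\alpha\le\omega\) (i.e.\ \(\alpha_t\le\omega_t\) for all \(t\in[0,\infty)\)) is a closed partial order on the Polish space \(\mathbb{D}\) — at common continuity points of the limits the inequalities pass to the limit, and right–continuity extends them to all \(t\) — the order \(\preceq_{pst}\) is preserved under weak convergence, see \cite{kamae1977} and \cite{muller2002comparison}. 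Therefore \(X\preceq_{pst}Y\).

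The genuinely routine part is the bookkeeping identifying the truncated data with the hypotheses of Theorem \ref{theo: Mnew}. The step that needs care is the limit \(n\to\infty\): one must ensure that \cite[Theorem VII.3.4]{JS} is applicable (this is where \(h\) continuous enters, as in Theorem \ref{theo: Approx 1}) and that \(\preceq_{pst}\) is closed under weak convergence, which reflects the pointwise order being closed on \(\mathbb{D}\). Neither introduces a difficulty beyond what already appears in the proof of Theorem \ref{theo: Approx 1}.
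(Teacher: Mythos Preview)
Your proposal is correct and follows essentially the same route as the paper: define \(\widehat{Z}^\bullet(n)=Z^\bullet(n)+Z^c\) as in Lemma \ref{lem: qlc}, apply Theorem \ref{theo: Mnew} to the truncated pair, then pass to the limit via \cite[Theorem VII.3.4]{JS} and the closure of \(\preceq_{pst}\) under weak convergence (the paper cites \cite[Proposition 3]{kamae1977} for this last step). Your write-up simply supplies more of the verification details (the finite-variation check for \(\1_{G_n}\cdot\nu^\bullet\), the identification of \eqref{drift cond trunc} with \eqref{drift cond new}, and the closedness of the pointwise order on \(\mathbb{D}\)) that the paper leaves implicit.
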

\begin{proof}
	Let \(\widehat{Z}^\bullet (n) \triangleq Z^\bullet(n) + Z^c\), where \(Z^\bullet (n)\) and \(Z^c\) are as in Lemma \ref{lem: qlc}. 
	We deduce from \cite[Theorem VII.3.4]{JS} that \(\widehat{Z}^{X}(n)\) convergences in law to \(X\) and \(\widehat{Z}^{Y}(n)\) converges in law to \(Y^{qlc}\) as \(n \to \infty\).
	Now, Theorem \ref{theo: Mnew} yields \(\widehat{Z}^{X}(n) \preceq_{pst} \widehat{Z}^{Y}(n)\) and by \cite[Proposition 3]{kamae1977} we conclude that \(X \preceq_{pst} Y\). 
\end{proof}


\subsection{Cut Criteria for the Monotone Stochastic Order}
In this section we study the case where the frequencies of jumps change once. Together with some integrability conditions, this implies the stochastic order \(\preceq_{pst}\).
We start with a technical observation:
\begin{lemma}\label{lemma comparison measure}
	There exists a \(\sigma\)-finite measure \(\nu\) on \(([0, \infty) \times \mathbb{R}, \mathscr{B}([0, \infty)) \otimes \mathscr{B}(\mathbb{R}))\) such that \(\nu^{X}\ll \nu\) and \(\nu^{Y} \ll \nu\).
\end{lemma}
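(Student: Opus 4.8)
The plan is to exhibit an explicit dominating measure, namely the sum $\nu \triangleq \nu^{X} + \nu^{Y}$, and to verify directly that it has the two required properties. Since this is a routine measure-theoretic fact, there is no genuine obstacle; the only point needing a word of justification is that the sum is again $\sigma$-finite.

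First I would check absolute continuity, which is immediate: if $A \in \mathscr{B}([0, \infty)) \otimes \mathscr{B}(\mathbb{R})$ satisfies $\nu(A) = 0$, then $0 = \nu(A) = \nu^{X}(A) + \nu^{Y}(A)$, and since both summands are non-negative we obtain $\nu^{X}(A) = \nu^{Y}(A) = 0$. Hence $\nu^{X} \ll \nu$ and $\nu^{Y} \ll \nu$.

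Second I would verify $\sigma$-finiteness by passing to a common refinement of exhausting sequences. Since $\nu^{X}$ is $\sigma$-finite, there is a countable partition $[0, \infty) \times \mathbb{R} = \bigcup_{n} E_{n}$ into measurable sets with $\nu^{X}(E_{n}) < \infty$; likewise $[0, \infty) \times \mathbb{R} = \bigcup_{m} F_{m}$ with $\nu^{Y}(F_{m}) < \infty$. The countable family $(E_{n} \cap F_{m})_{n, m}$ covers $[0, \infty) \times \mathbb{R}$ and satisfies $\nu(E_{n} \cap F_{m}) = \nu^{X}(E_{n} \cap F_{m}) + \nu^{Y}(E_{n} \cap F_{m}) \leq \nu^{X}(E_{n}) + \nu^{Y}(F_{m}) < \infty$, so $\nu$ is $\sigma$-finite. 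This completes the argument.

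As an alternative that avoids even this refinement step, one could first replace $\nu^{X}$ and $\nu^{Y}$ by equivalent \emph{finite} measures $\widetilde{\nu}^{X} = g^{X} \cdot \nu^{X}$ and $\widetilde{\nu}^{Y} = g^{Y} \cdot \nu^{Y}$ with strictly positive densities $g^{X}, g^{Y}$ (which exist precisely because $\nu^{X}$ and $\nu^{Y}$ are $\sigma$-finite), and then set $\nu \triangleq \widetilde{\nu}^{X} + \widetilde{\nu}^{Y}$, which is finite, hence $\sigma$-finite; the chains $\nu^{X} \ll \widetilde{\nu}^{X} \ll \nu$ and $\nu^{Y} \ll \widetilde{\nu}^{Y} \ll \nu$ then give the claim. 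I would present the first, more elementary, version as the main proof.
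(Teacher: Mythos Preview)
Your proposal is correct and takes the same approach as the paper: both set $\nu \triangleq \nu^{X} + \nu^{Y}$ and note that this dominates each summand and is $\sigma$-finite. Your argument is in fact more detailed than the paper's one-line justification, which simply invokes the $\sigma$-finiteness of $\nu^{X}$ and $\nu^{Y}$ and the Radon--Nikodym theorem; your explicit refinement $(E_n \cap F_m)_{n,m}$ and the alternative via equivalent finite measures are both fine, though more than the paper deems necessary.
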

\begin{proof}
	Since \(\nu^{X}\) and \(\nu^{Y}\) are \(\sigma\)-finite, by the Radon-Nikodym theorem, \(\nu \triangleq \nu^{X} + \nu^{Y}\) has the desired properties.
\end{proof}
Let \(|\) be either \([\) or \(]\) and let \(|^c\) be the converse, i.e. if \(| =\ ]\), then \(|^c = [\). 

\begin{proposition}\label{theo: M1}
	Let \(k \in \mathbb{R}\) and suppose that \(\nu\)-a.e.
	\begin{equation}\label{K cond main}
	\begin{split}
	\frac{\dd \nu^{X}}{\dd \nu}\1_{[0, \infty) \times |k, \infty)} &\leq \frac{\dd \nu^{Y}}{\dd \nu}\1_{[0, \infty) \times |k, \infty)},\\
	\ \frac{\dd \nu^{X}}{\dd \nu}\1_{[0, \infty) \times (- \infty, k|^c}&\geq \frac{\dd \nu^{Y}}{\dd \nu} \1_{[0, \infty) \times (- \infty, k|^c}.
	\end{split}
	\end{equation}
	Moreover, assume for \(\dd A_t\)-a.a. \(t \in [0, \infty)\) 
	\begin{equation}\label{eq: comp 1}
	\begin{split}
	\int_{|k, 0]} \left(K^Y(t, \dd x) - K^X(t, \dd x)\right) \leq &\int_{(- \infty, k|^c}\left(K^X(t, \dd x) - K^Y(t, \dd x)\right)
	\end{split}
	\end{equation}
	in the case \(k < 0\), and 
	\begin{equation}\label{eq: comp 2}
	\begin{split}
	\int_{[0, k|^c} \left(K^X(t, \dd x) - K^Y(t, \dd x)\right) \leq &\int_{| k, \infty)}\left(K^Y(t, \dd x) - K^X(t, \dd x)\right)
	\end{split}
	\end{equation}
	in the case \(k > 0\).
	Finally, suppose that for all \(t \in [0, \infty)\)
	\begin{align}\label{main inte cond}
	\left|h(x)\left(\frac{\dd \nu^{Y}}{\dd \nu} - \frac{\dd \nu^{X}}{\dd \nu}\right)\right| \star \nu_t < \infty,
	\end{align}
	\begin{align}\label{K cond coro}
	B^{Y}_t- B^{X}_t - h(x) \left(\frac{\dd \nu^{Y}}{\dd \nu} - \frac{\dd \nu^{X}}{\dd \nu}\right)\star \nu_t  \geq 0.
	\end{align}
	Then \(X \preceq_{pst} Y\).
\end{proposition}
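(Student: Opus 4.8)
The plan is to follow the coupling strategy of Theorem~\ref{theo: Mnew}: I would construct, on a single probability space, copies of $X$ and $Y$ — still written $X,Y$ — with $X_t\le Y_t$ for all $t\in[0,\infty)$ almost surely, which clearly gives $X\preceq_{pst}Y$. As in Theorem~\ref{theo: Mnew} the two processes will be driven through It\^o-type maps $\rho^X,\rho^Y$ by a common reference PII; the new point is that, since we no longer have the two-sided tail inequality~\eqref{main coupl st}, I must build $\rho^X,\rho^Y$ so that $\rho^X(t,\cdot)\le\rho^Y(t,\cdot)$ \emph{everywhere} while the images of the reference L\'evy kernel under $\rho^X$ and $\rho^Y$ are still $K^X$ and $K^Y$. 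Granting such maps, the computation at the end of the proof of Theorem~\ref{theo: Mnew} goes through \emph{mutatis mutandis}, with~\eqref{main inte cond} (in place of~\eqref{fv}) guaranteeing that $h(x)\star\nu^Y_t - h(x)\star\nu^X_t = h(x)\big(\tfrac{\dd\nu^Y}{\dd\nu}-\tfrac{\dd\nu^X}{\dd\nu}\big)\star\nu_t$ is finite: with $Z^\bullet$ as in~\eqref{Z bullet} (adding a common Wiener part with variance function $C$, which cancels), \cite[Theorems~II.5.2 and~II.5.10]{JS} give that $Z^X$, $Z^Y$ have the laws of $X$, $Y$, and
\[
Z^Y_t - Z^X_t \;=\; B^Y_t - B^X_t \;-\; h(x)\star\nu^Y_t \;+\; h(x)\star\nu^X_t \;+\; (\rho^Y-\rho^X)\star\mu^L_t \;\ge\; 0,
\]
the first difference being $\ge 0$ by~\eqref{K cond coro} and the last term being $\ge 0$ because $\rho^X\le\rho^Y$.

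To produce the maps I would first split the kernels into a shared part and two ``excess'' parts. Using Lemma~\ref{lemma comparison measure} and~\eqref{K cond main}, let $K^{X\wedge Y}(t,\dd x)$ have $\nu$-density $\min\big(\tfrac{\dd\nu^X}{\dd\nu},\tfrac{\dd\nu^Y}{\dd\nu}\big)$ and put $K^{X'}\triangleq K^X-K^{X\wedge Y}$, $K^{Y'}\triangleq K^Y-K^{X\wedge Y}$; the cut condition then says $K^{X'}(t,\cdot)$ is carried by $(-\infty,k|^c$ and $K^{Y'}(t,\cdot)$ by $|k,\infty)$. I treat $k>0$; the case $k<0$ is symmetric, exchanging the roles of the two excess kernels and of ``upward'' and ``downward''. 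Now $K^{X'}$ further decomposes into its restriction to $(-\infty,0)$ — downward excess jumps of $X$, which only help $X\le Y$ — and its restriction to $[0,k|^c$, the only dangerous jumps. Here~\eqref{eq: comp 2} forces $K^{X'}(t,[0,k|^c)\le K^{Y'}(t,\mathbb{R})$ for $\dd A_t$-a.a.\ $t$, and since $K^{Y'}$ is carried by $\{|x|\ge k\}$ this quantity is $\dd A_t$-locally integrable, so $\nu^{X'}$ has finite mass near the origin; this, together with~\eqref{main inte cond}, is what replaces the finite-variation assumption~\eqref{fv} when checking the construction below is well defined. The same inequality lets me split $K^{Y'}(t,\cdot)$ into a ``matched'' piece — $K^{Y'}(t,\cdot)$ scaled by $K^{X'}(t,[0,k|^c)/K^{Y'}(t,\mathbb{R})$ on $\{K^{Y'}(t,\mathbb{R})>0\}$, which has the same $\dd A_t$-time-marginal rate as $K^{X'}(t,\cdot)|_{[0,k|^c}$ — and a ``surplus'' piece with the remaining, non-negative rate.

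I would then construct the reference PII exactly as in Theorem~\ref{theo: Mnew}, but over a state space carrying four disjoint channels — shared, $X$-downward, matched, surplus — with reference L\'evy kernel the juxtaposition of: the shared kernel; $K^{X'}(t,\cdot)|_{(-\infty,0)}$; a joint kernel on $[0,k|^c\times|k,\infty)$ coupling the $x$-marginal $K^{X'}(t,\cdot)|_{[0,k|^c}$ with the matched piece of $K^{Y'}$ (e.g.\ the product coupling at each $t$); and the surplus piece of $K^{Y'}$. Existence of the corresponding PII $L$ follows from \cite[Theorem~II.5.2]{JS}, as in Lemma~\ref{lemma L}, the integrability off neighbourhoods of the origin coming from \cite[II.5.5(i)]{JS} for the tails and from the previous paragraph for the small-jump channels. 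Taking $\rho^X,\rho^Y$ to be the channel-wise evaluation maps — both equal to the shared jump on the shared channel; $(\rho^X,\rho^Y)=(v,0)$ with $v<0$ on the $X$-downward channel; $(\rho^X,\rho^Y)=(x,y)$ with $0\le x\le y$ (with $x$ below and $y$ above the cut) on the matched channel; $(\rho^X,\rho^Y)=(0,y)$ with $y\ge k$ on the surplus channel — one has $\rho^X\le\rho^Y$ everywhere, while the images of the reference kernel under $\rho^X$ and $\rho^Y$ are $K^{X\wedge Y}+K^{X'}=K^X$ and $K^{X\wedge Y}+K^{Y'}=K^Y$. With these maps the first paragraph finishes the proof.

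I expect the main obstacle to be the rigorous, jointly measurable execution of this splitting and matching at the level of kernels in $t$ — the disintegration producing the four channels and the maps $\rho^X,\rho^Y$ as Borel functions of $(t,\cdot)$, including the degenerate fibres where $K^{Y'}(t,\mathbb{R})=0$ (hence also $K^{X'}(t,[0,k|^c)=0$) — together with the verification that the small-jump channels satisfy the hypotheses of \cite[Theorem~II.5.2]{JS} using only~\eqref{main inte cond}, \eqref{eq: comp 2} and \cite[II.5.5(i)]{JS}, and that the $h$-compensation of the shared channel cancels exactly in $Z^Y-Z^X$, which forces the reference driver to be literally common to $Z^X$ and $Z^Y$ on that channel.
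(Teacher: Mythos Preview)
Your coupling is essentially the paper's: it too isolates a common part, gives $X$ alone the negative excess jumps, and (for $k>0$) pairs each $X$-excess jump in $[0,k|^c$ with a larger $Y$-excess jump in $|k,\infty)$, realizing your matched/surplus split via an auxiliary PII $Z^+$ with state space $\mathbb{R}^3$ whose third coordinate is a uniform $u\in[0,1]$ together with a thinning indicator $m(t,u)=\1\{u\le (\text{matched rate})/(\text{total }Y\text{-excess rate})\}$ --- exactly your scaling of the $Y$-excess kernel, only packaged as thinning rather than a pre-split. One caution on your displayed formula for $Z^Y_t-Z^X_t$: under \eqref{main inte cond} alone the individual integrals $h\star\nu^X_t$ and $h\star\nu^Y_t$ need not be finite (that would be \eqref{fv}); the well-defined quantity is $(h(\rho^Y)-h(\rho^X))\star\nu^L_t$, because the integrand vanishes on the shared channel, and this equals the integral in \eqref{K cond coro} --- which is precisely how the paper (via \cite[Proposition~II.1.28]{JS}) justifies the analogous step.
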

We stress that the r.h.s. of \eqref{eq: comp 1} and \eqref{eq: comp 2} are finite due to \cite[II.5.5 (i)]{JS}.
\begin{remark}
	Suppose that \(X\) and \(Y\) are semimartingales with independent increments and that their laws are locally absolutely continuous. Then, by Girsanov's theorem \cite[Theorem III.3.24]{JS}, we find \(\nu\) such that \eqref{main inte cond} holds and \eqref{K cond coro} only depends on the Gaussian parts of \(X\) and \(Y\).
\end{remark}

We provide the intuitions behind the assumptions of Theorem \ref{theo: M1}. The condition \eqref{K cond main} means that \(Y\) has a higher frequency of jumps with size larger than \(k\) compared to \(X\) and that \(X\) has a higher frequency of jumps with size less than \(k\) compared to \(Y\).
The conditions \eqref{eq: comp 1} and \eqref{eq: comp 2} compensate negative jumps which are done by \(Y\) but not by \(X\) and positive jumps which are done by \(X\) but not by \(Y\).

Following this intuition, we can construct explicit couplings of \(X\) and \(Y\) which are pathwise ordered. We reveal that Proposition \ref{theo: M1} is (modulo integrability issues) a consequence of Theorem \ref{theo: Mnew}. However, we think the alternative proof explains very nicely the origin of the conditions and illustrates the relations of the characteristics of the PIIs and the stochastic order.
\\\\
\begin{proof}
	We only discuss the cases \(k = 0\) and \(k > 0\), since the case \(k < 0\) follows similarly to the case \(k > 0\).
	
	\emph{The case \(k = 0\).} We set
	\(
	\nu^{X} \wedge \nu^{Y}\triangleq\1_{[0, \infty)}\cdot \nu^{X}+ \1_{(- \infty, 0]}\cdot \nu^{Y}.\)
	Due to \cite[Theorem II.5.2]{JS} we find a filtered probability space which supports a PII \(Z\) with characteristics \((0, C,\nu^{X} \wedge \nu^{Y})\), a PII \(Z^X\) with characteristics \((B^{X}, 0, \1_{(- \infty, 0]} \cdot (\nu^{X}- \nu^{Y}))\) and a PII \(Z^Y\) with characteristics \((B^{Y}, 0, \1_{[0, \infty)}\cdot (\nu^{Y} -\nu^{X}))\) such that \(Z, Z^X\) and \(Z^Y\) are independent. 
	Then, by independence and \cite[Lemma II.2.44, Theorem II.5.2]{JS},
	\(
	Z + Z^X
	\)
	has the same law as \(X\) and
	\(
	Z + Z^Y
	\)
	has the same law as \(Y\).
	Moreover,
	a.s. \(x\1_{[0, \infty)} \star \mu^{Z^X}_t = 0\) and \(x\1_{(-\infty, 0]} \star \mu^{Z^Y}_t = 0\) for all \(t \in [0, \infty)\), noting the support of the third characteristics.
	Thanks to \cite[Proposition II.1.28]{JS} and \eqref{main inte cond}, we obtain for all \(t \in [0, \infty)\)
	\begin{align*}
	Z^Y_t - Z^X_t
	= B^{Y}_t - B^{X}_t &- h(x) \left(\frac{\dd \nu^{Y}}{\dd \nu} - \frac{\dd \nu^{X}}{\dd \nu}\right) \star \nu_t 
	\\&+ x \1_{[0, \infty)}  \star \mu^{Z^Y}_t - x\1_{(- \infty, 0]} \star\mu^{Z^X}_t.
	\end{align*}
	Hence, using \eqref{K cond coro}, \(Z_t + Z^X_t \leq Z_t + Z^Y_t\) for all \(t \in [0, \infty)\). This proves \(X \preceq_{pst} Y\).
	
	\emph{The case \(k > 0\).}  
	We define a Borel measure \(\nu^+\) on \([0, \infty) \times \mathbb{R} \times \mathbb{R} \times \mathbb{R}\) by 
	\begin{align*}
	&\frac{\nu^+ (\dd t \times \dd x \times \dd y \times \dd u)}{\left[ K^X(t, \dd x) - K^Y(t, \dd x)\right] \left[K^Y(t, \dd y) - K^X(t, \dd y)\right]\dd u \dd A_t}
	\\&\hspace{5cm} \triangleq \frac{\1_{[0, k|^c}(x) \1_{|k, \infty)} (y) \1_{[0, 1]}(u) }{\int_{[0, k|^c} (K^X(t, \dd z) - K^Y(t, \dd z))},
	\end{align*}
	with the convention that \(\frac{1}{0} = 1\). Here, we use \eqref{eq: comp 2}, i.e. that the denominator on the r.h.s. is bounded from above for \(\dd A_t\)-a.a. \(t \in [0, \infty)\), see \cite[II.5.5 (i)]{JS}.
	\begin{lemma}
		There exists a PII \(Z^+\) with characteristics \((0, 0, \nu^+)\).
	\end{lemma}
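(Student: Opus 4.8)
The plan is to read the lemma off the correspondence between laws of PIIs and characteristic triplets in \cite[Theorem II.5.2]{JS}, applied on the state space $\mathbb{R}^3$. Since the first two characteristics of the proposed triplet vanish, the conditions \cite[II.5.3--II.5.5]{JS} involving $B$ and $C$ hold trivially, and the whole verification reduces to showing that $\nu^+$ is an admissible third characteristic: writing $z=(x,y,u)$ for the generic point of $\mathbb{R}^3$, we must check that $\nu^+$ is a $\sigma$-finite nonnegative measure with $(|z|^2\wedge 1)\star\nu^+_t<\infty$ for all $t$, that $t\mapsto(|z|^2\wedge1)\star\nu^+_t$ is continuous, and that $\nu^+(\{t\}\times\mathbb{R}^3)=0$ for every $t\in[0,\infty)$. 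This is a routine check in the spirit of the earlier construction lemmas of the paper.

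First I would note that $\nu^+$ is a well-defined nonnegative measure. It is obtained by integrating the density defining $\nu^+$ against the product kernel
\[
\bigl[\1_{[0,k|^c}(x)\bigl(K^X(t,\dd x)-K^Y(t,\dd x)\bigr)\bigr]\otimes\bigl[\1_{|k,\infty)}(y)\bigl(K^Y(t,\dd y)-K^X(t,\dd y)\bigr)\bigr]\otimes\dd u\otimes\dd A_t,
\]
and \eqref{K cond main} guarantees that the first two factors are genuine nonnegative measures ($K^X(t,\cdot)\geq K^Y(t,\cdot)$ on $[0,k|^c$ and $K^Y(t,\cdot)\geq K^X(t,\cdot)$ on $|k,\infty)$ for $\dd A_t$-a.a.\ $t$), while \eqref{eq: comp 2} together with \cite[II.5.5 (i)]{JS} ensures that the normalising denominator $\int_{[0,k|^c}(K^X(t,\dd z)-K^Y(t,\dd z))$ is finite for $\dd A_t$-a.a.\ $t$; on the $\dd A_t$-null set where it vanishes the convention $\tfrac10=1$ is harmless, since there the restricted measure $\1_{[0,k|^c}\cdot(K^X(t,\cdot)-K^Y(t,\cdot))$ is already null. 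Measurability of the kernels in $t$ is standard.

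The key observation is that $\nu^+$ is even finite on $[0,t]\times\mathbb{R}^3$ for every $t$: integrating the density successively over $x\in\mathbb{R}$ (which reproduces the denominator, hence a factor $\leq1$), over $u\in[0,1]$ (a factor $1$) and over $y\in|k,\infty)$, one obtains
\[
\nu^+([0,t]\times\mathbb{R}^3)\leq\int_0^t\int_{|k,\infty)}\bigl(K^Y(s,\dd y)-K^X(s,\dd y)\bigr)\dd A_s\leq\nu^Y([0,t]\times|k,\infty))<\infty
\]
by \cite[II.5.5 (i)]{JS}. In particular $\nu^+$ is $\sigma$-finite and $(|z|^2\wedge1)\star\nu^+_t<\infty$ for all $t$. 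Finally, since $A$ is continuous, $\dd A_s$ puts no mass on any singleton $\{t\}$, so $\nu^+(\{t\}\times\mathbb{R}^3)=0$ for every $t$ (hence $Z^+$ will in fact be quasi-left continuous), and $t\mapsto(|z|^2\wedge1)\star\nu^+_t$ is absolutely continuous with respect to $\dd A$, hence continuous. Assembling these facts, \cite[Theorem II.5.2]{JS} provides the desired PII $Z^+$.

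I do not expect any genuine obstacle here. The only points needing a little care are the bookkeeping that the restricted differences $K^X-K^Y$ and $K^Y-K^X$ assemble into a bona fide $\sigma$-finite nonnegative product kernel, and the treatment of the $\dd A_t$-null set on which the normalising denominator of the density vanishes; both are handled by \eqref{K cond main}, \eqref{eq: comp 2} and \cite[II.5.5 (i)]{JS}.
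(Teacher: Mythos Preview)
Your proposal is correct and follows essentially the same approach as the paper: invoke \cite[Theorem II.5.2]{JS} and verify that $\nu^+$ is an admissible third characteristic by establishing finiteness on $[0,t]\times\mathbb{R}^3$. The paper is terser---it computes directly that $\nu^+([0,t]\times\mathbb{R}^3) = (\nu^Y-\nu^X)([0,t]\times|k,\infty))$ (an equality rather than your upper bound, since integrating the $x$-factor against the density yields exactly $1$ where the denominator is nonzero) and leaves the remaining routine checks implicit---but the substance is identical.
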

	\begin{proof}
		In view of \cite[Theorem II.5.2]{JS} it suffices to show that \(\nu^+\) is finite on \([0, t] \times \mathbb{R}^3\) for all \(t \in [0, \infty)\). 
		It holds that 
		\(
		\nu^+([0, t] \times \mathbb{R}^3) = (\nu^{Y} - \nu^{X}) ([0, t] \times |k, \infty)) < \infty,
		\)
		see \cite[II.5.5 (i)]{JS}.
		Hence, the lemma is proven.
	\end{proof}
	We find a filtered probability space which supports a PII \(Z\) with characteristics \((0, C, \nu^X \wedge \nu^Y)\), where \(\nu^X \wedge \nu^Y \triangleq \1_{|k, \infty)} \cdot \nu^X + \1_{(- \infty, k|^c} \cdot \nu^Y\), a PII \(Z^X\) with characteristics \((B^X, 0, \1_{(- \infty, 0]} \cdot (\nu^X - \nu^Y))\) and the PII \(Z^+\) such that they are independent.
	We define 
	\begin{align*}m(t, u) \triangleq \1 \left\{ u \leq \frac{\int_{[0, k|^c} (K^X(t, \dd z) - K^Y(t, \dd z))}{\int_{|k, \infty)} (K^Y(t, \dd z) - K^X(t, \dd z))}\right\}.
	\end{align*}
	Now, set
	\begin{align*}
	Z^{X, -} &\triangleq x m(t, u) \1_{[0, k|^c}(x) \1_{|k, \infty)}(y)\star \mu^{Z^+} 
	- h \1_{[0, k|^c} \star (\nu^{X} - \nu^{Y}),\\
	Z^{Y} &\triangleq y \1_{[0, k|^c}(x) \1_{|k, \infty)}(y)\star \mu^{Z^+} + B^{Y}
	- h \1_{|k, \infty)} \star (\nu^{Y} - \nu^{X}).
	\end{align*}
	By assumption \eqref{main inte cond}, the processes \(Z^{X, -}\) and \(Z^Y\) are well-defined.
	\begin{lemma}
		The process \(Z^{X, -}\) is a PII with characteristics \((0, 0, \1_{[0, k|^c} \cdot (\nu^{X}- \nu^{Y}))\) and \(Z^Y\) is a PII with characteristics \((B^{Y}, 0, \1_{|k, \infty)} \cdot (\nu^{Y} - \nu^{X}))\).
	\end{lemma}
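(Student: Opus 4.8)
The plan is to argue exactly as in the proof of Theorem~\ref{theo: Mnew}, where \(Z^\bullet\) was identified through an image-of-jump-measure construction, but now with the \(\mathbb{R}^3\)-valued driving PII \(Z^+\) in place of \(L\) and with the coordinate projections hidden in the definitions of \(Z^{X,-}\) and \(Z^Y\). Set \(\phi(s,x,y,u) \triangleq x\, m(s,u)\,\1_{[0,k|^c}(x)\,\1_{|k,\infty)}(y)\) and \(\psi(s,x,y,u) \triangleq y\, \1_{[0,k|^c}(x)\,\1_{|k,\infty)}(y)\), so that \(Z^{X,-} = \phi\star\mu^{Z^+} - h\1_{[0,k|^c}\star(\nu^{X}-\nu^{Y})\) and \(Z^Y = B^Y + \psi\star\mu^{Z^+} - h\1_{|k,\infty)}\star(\nu^{Y}-\nu^{X})\). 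First I would check, by the same reasoning used for \(\mu^{Z^\bullet}\), that the images \(\mu'\) and \(\mu''\) of \(\mu^{Z^+}\) under \((s,z)\mapsto(s,\phi(s,z))\) and \((s,z)\mapsto(s,\psi(s,z))\) (restricted away from the origin in the second coordinate) are optional, \(\widetilde{\mathscr{P}}\)-\(\sigma\)-finite integer-valued random measures; for the \(\sigma\)-finiteness one uses the sets \(\{|x|>1/n\}\) together with \cite[II.5.5]{JS}, which gives \(\mu'([0,t]\times\{|x|>1/n\})\) integrable.

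The core of the argument is the identification of the compensators of \(\mu'\) and \(\mu''\). Since \(Z^+\) is a PII, \(\mu^{Z^+}\) has the deterministic compensator \(\nu^+\), and the compensator of \(\mu'\) (resp.\ \(\mu''\)) is the image of \(\nu^+\) under \((s,z)\mapsto(s,\phi(s,z))\) (resp.\ \((s,z)\mapsto(s,\psi(s,z))\)), the projections being measurable and predictable in the time variable. I would carry out this image computation for a Borel set \(G\) bounded away from \(0\): on the support of \(\nu^+\) the indicators \(\1_{[0,k|^c}(x)\1_{|k,\infty)}(y)\1_{[0,1]}(u)\) all equal \(1\), so one is left with a product integral over \((x,y,u)\). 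For \(\mu''\) the \(u\)-integral contributes \(1\) and the \(x\)-integral of the normalized measure \((K^{X}-K^{Y})(s,\dd x)\big/\int_{[0,k|^c}(K^{X}-K^{Y})(s,\dd z)\) also contributes \(1\), leaving the compensator \(\1_{|k,\infty)}\cdot(\nu^{Y}-\nu^{X})\). For \(\mu'\) the \(y\)-integral contributes \(\int_{|k,\infty)}(K^{Y}-K^{X})(s,\dd y)\) and the \(u\)-integral of \(m(s,\cdot)\) contributes \(\int_{[0,k|^c}(K^{X}-K^{Y})(s,\dd z)\big/\int_{|k,\infty)}(K^{Y}-K^{X})(s,\dd z)\); this is precisely where \eqref{eq: comp 2} enters, ensuring this ratio is \(\le 1\) so that \(\int_0^1 m(s,u)\,\dd u\) equals it rather than \(1\). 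The remaining factors telescope and the compensator of \(\mu'\) equals \(\1_{[0,k|^c}\cdot(\nu^{X}-\nu^{Y})\). By \eqref{K cond main} both image measures are nonnegative and \(\sigma\)-finite (dominated by \(\nu^{X}\) and \(\nu^{Y}\)), confirming that \(\mu'\) and \(\mu''\) are genuine jump measures.

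To conclude, I would write \(Z^{X,-} = h(x)\star\big(\mu' - \1_{[0,k|^c}\cdot(\nu^{X}-\nu^{Y})\big) + (x-h(x))\star\mu'\) and \(Z^Y = B^Y + h(x)\star\big(\mu'' - \1_{|k,\infty)}\cdot(\nu^{Y}-\nu^{X})\big) + (x-h(x))\star\mu''\) — the legitimacy of these canonical decompositions being part of the well-definedness already granted, with \cite[II.5.5]{JS} covering the small-jump integrability — and read off the deterministic characteristics \((0,0,\1_{[0,k|^c}\cdot(\nu^{X}-\nu^{Y}))\) and \((B^Y,0,\1_{|k,\infty)}\cdot(\nu^{Y}-\nu^{X}))\). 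An application of \cite[Theorem II.5.10]{JS} then gives that \(Z^{X,-}\) and \(Z^Y\) are PIIs with exactly these characteristics.

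The hard part will be the bookkeeping in the image-of-compensator computation: correctly tracking which indicators are already built into the support of \(\nu^+\), integrating out the auxiliary \(y\)- and \(u\)-coordinates, and invoking \eqref{eq: comp 2} at the right step to evaluate \(\int_0^1 m(s,u)\,\dd u\) (together with the convention \(1/0=1\) and the observation that a vanishing normalizing denominator forces the corresponding difference measure to vanish, by \eqref{K cond main}). Everything else is the routine measure-theoretic verification already illustrated in the proof of Theorem~\ref{theo: Mnew}.
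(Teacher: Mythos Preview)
Your proposal is correct and follows essentially the same approach as the paper: both identify the jump measures of \(Z^{X,-}\) and \(Z^Y\) as images of \(\mu^{Z^+}\) and compute their compensators by integrating against \(\nu^+\), invoking \eqref{eq: comp 2} so that \(\int_0^1 m(t,u)\,\dd u\) equals the ratio rather than \(1\), then conclude via \cite[Theorem II.5.10]{JS}. The paper packages this slightly more tersely by working directly with test functions \(g\in\mathscr{C}^+\) and \cite[Theorem II.1.33]{JS} rather than naming the image measures \(\mu',\mu''\) explicitly, but the computation and the role of \eqref{eq: comp 2} are identical.
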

	\begin{proof}
		Let \(\mathscr{C}^+\) be a set of test functions as defined in \cite[II.2.20]{JS}. All functions in \(\mathscr{C}^+\) are bounded and vanish in a neighborhood of the origin.
		In view of \cite[Proposition I.1.28, Theorem II.1.33, Theorem II.5.10]{JS} for the first claim suffices to show that for all \(g \in \mathscr{C}^+\) the process 
		\(
		g \star \mu^{Z^{X, -}} - g  \1_{[0, k|^c} \star (\nu^{X}- \nu^{Y})
		\)
		is a local martingale. In fact, since \(g\) vanishes in a neighborhood of the origin, we have
		\(
		g \star \mu^{Z^{X, -}} = g (x) m(t, u) \1_{[0, k|^c} (x) \1_{|k, \infty)}(y) \star \mu^{Z^+},
		\)
		which compensator is given by
		\(
		g (x) m(t, u) \1_{[0, k|^c} (x) \1_{|k, \infty)}(y) \star \nu^{+} 
		= g \1_{[0, k|^c} \star \left(\nu^X - \nu^Y\right).
		\)
		Here, we use \eqref{eq: comp 2}.
		This proves the first claim. The second claim follows in the same manner.
	\end{proof}
	Now, by independence and  \cite[Lemma II.2.44, Theorem II.5.2]{JS}, \(Z + Z^X + Z^{X,-}\) has the same law as \(X\) and \(Z + Z^Y\) has the same law as \(Y\). Moreover, for all \(t \in [0, \infty)\)
	\begin{align*}
	Z^Y_t - Z^X_t - Z^{X, -}_t = B^{Y}_t &- B^{X}_t - h(x) \left(\frac{\dd \nu^{Y}}{\dd \nu} - \frac{\dd \nu^{X}}{\dd \nu}\right) \star \nu_t + x \1_{[0, \infty)}  \star \mu^{Z^Y}_t
	\\&+ (y - x m(t, u))\1_{[0, k|^c}(x)\1_{|k, \infty)}(y) \star \mu^{Z^+}_t.
	\end{align*}
	Since \(m(t, u) \leq 1\), we obtain \(Z^Y_t - Z^X_t - Z^{X, -}_t \geq 0\) for all \(t \in [0, \infty)\). Again, this proves \(X \preceq_{pst} Y\).
\end{proof}

\subsection{A Majorization Condition for the Convex Stochastic Order}\label{sec: C}
In this section we show that if the third characteristic of \(Y\) dominates the third characteristics of \(X\), then \(X \preceq_{(i)cx} Y\). Our proof is based on a coupling. We stress that the proof is very robust in sense that it applies to all PIIs with finite first moments.
\begin{theorem}\label{theo: C}
	Suppose that for all \(t \in [0, \infty)\) the matrix \(C^Y_t - C^X_t\) is non-negative definite, \(\nu^{Y}- \nu^{X}\) is a non-negative measure, \(|x - h(x)| \star \nu^\bullet_t < \infty\) and 
	\begin{align}\label{eq: ineq moment}
	B^X_t + |x - h(x)| \star \nu^X_t \leq B^Y_t + |x - h(x)| \star\nu^Y_t,
	\end{align}
	Then \(X \preceq_{icx} Y\). If, additionally, \eqref{eq: ineq moment} holds with equality, then also \(X \preceq_{cx} Y\). 
\end{theorem}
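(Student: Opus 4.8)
\emph{Idea of proof.}\ The plan is to realise $Y$, in law, as the sum of an independent copy of $X$ and a PII with non-negative mean, and then to conclude by Jensen's inequality in the spirit of Strassen's theorem. As advertised before the statement, this argument uses neither quasi-left continuity nor any hypothesis beyond finite first moments.

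First I would build the ``difference'' process. Put $B^Z\triangleq B^Y-B^X$, $C^Z\triangleq C^Y-C^X$ and $\nu^Z\triangleq\nu^Y-\nu^X$. By hypothesis $C^Z$ is again a legitimate second characteristic, and $\nu^Z$ is a non-negative measure with $\nu^Z\le\nu^Y$, hence $\sigma$-finite; the compatibility conditions \cite[II.5.3--II.5.5]{JS} for the triplet $(B^Z,C^Z,\nu^Z)$ follow by subtracting those of $X$ from those of $Y$ (in particular $\Delta B^Z_t=\int h(x)\,\nu^Z(\{t\}\times\dd x)$). Thus \cite[Theorem II.5.2]{JS} provides a PII $Z$ with characteristics $(B^Z,C^Z,\nu^Z)$, which I take to be independent of a copy of $X$ on a common probability space. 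By \cite[Lemma II.2.44, Theorem II.5.2]{JS} the process $X+Z$ is then a PII with characteristics $(B^X+B^Z,\,C^X+C^Z,\,\nu^X+\nu^Z)=(B^Y,C^Y,\nu^Y)$, so $X+Z$ has the same law as $Y$. Since $\nu^Z\le\nu^Y$ and $|x-h(x)|\star\nu^Y_t<\infty$ by assumption, also $|x-h(x)|\star\nu^Z_t<\infty$, so $Z_t$ is integrable with $E[Z_t]=B^Z_t+(x-h(x))\star\nu^Z_t=E[Y_t]-E[X_t]$; condition \eqref{eq: ineq moment} says precisely that this difference is non-negative in every coordinate, and that it vanishes when \eqref{eq: ineq moment} holds with equality.

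For the coupling step, fix $m\in\mathbb{N}$, times $0\le t_1<\dots<t_m<\infty$ and a convex $f\colon\mathbb{R}^{d\cdot m}\to\mathbb{R}$. Every such $f$ is minorised by an affine function, so all the expectations below are well defined with values in $(-\infty,\infty]$. Writing $\mathscr{G}\triangleq\sigma(X_s:s\ge0)$ and using that $X$ is $\mathscr{G}$-measurable while $Z$ is independent of $\mathscr{G}$, Jensen's inequality (applied to $f(x+\cdot\,)$ for frozen $x$, then integrated) gives
\begin{align*}
E\big[f(Y_{t_1},\dots,Y_{t_m})\big]
&=E\big[f\big(X_{t_1}+Z_{t_1},\dots,X_{t_m}+Z_{t_m}\big)\big]\\
&\ge E\big[f\big(X_{t_1}+E[Z_{t_1}],\dots,X_{t_m}+E[Z_{t_m}]\big)\big].
\end{align*}
If \eqref{eq: ineq moment} holds with equality, then $E[Z_{t_i}]=0$ for all $i$ and the right-hand side equals $E[f(X_{t_1},\dots,X_{t_m})]$, whence $X\preceq_{cx}Y$. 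In general $E[Z_{t_i}]\ge0$ componentwise, and if in addition $f\in\mathcal{F}^m_{icx}$, then monotonicity of $f$ bounds the right-hand side below by $E[f(X_{t_1},\dots,X_{t_m})]$, whence $X\preceq_{icx}Y$.

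I do not expect a genuine obstacle here; the three points that deserve attention are: verifying that $(B^Z,C^Z,\nu^Z)$ really meets \cite[II.5.3--II.5.5]{JS}, so that $Z$ exists; justifying Jensen's inequality for the $\mathbb{R}^{d\cdot m}$-valued integrand, which is legitimate because a convex function is bounded below by an affine one and the vectors $(X_{t_i}+Z_{t_i})$ are integrable; and identifying $E[Z_t]$ with $E[Y_t]-E[X_t]$ via the standing integrability hypothesis, so that \eqref{eq: ineq moment} is exactly the requirement $E[Z_t]\ge0$ (resp.\ $E[Z_t]=0$).
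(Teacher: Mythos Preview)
Your proposal is correct and follows essentially the same approach as the paper: you construct the ``difference'' PII $Z$ with characteristics $(B^Y-B^X,\,C^Y-C^X,\,\nu^Y-\nu^X)$ independent of $X$, identify $X+Z$ with $Y$ in law, compute $E[Z_t]$, and conclude by (conditional) Jensen. The paper phrases the last step via $E[X_t+Z_t\mid\sigma(X_s,s\ge0)]=X_t+E[Z_t]$ and the conditional Jensen inequality, which is the same as your ``freeze $x$, apply Jensen to $f(x+\cdot)$, then integrate''; your remarks on the existence of $Z$ and on minorisation by an affine function are even slightly more explicit than the paper.
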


Theorem \ref{theo: C} generalizes the majorization criterion \cite[Proposition 19]{bergenthum2007b} for L\'evy processes with infinite activity by showing that the conditions \cite[Proposition 19 (a), (b)]{bergenthum2007b} are not necessary.
\\

\begin{proof}
	In view of \cite[Theorem II.5.2]{JS}, we can extend our probability space such that it supports a PII \(Z^{Y}\) with characteristics \((B^{Y} - B^{X}, C^Y - C^X, \nu^Y - \nu^X)\), which is independent of \(X\). 
	Now, by independence and \cite[Lemma II.2.44, Theorem II.5.2]{JS}, the process \(X + Z^{Y}\) has the same law as \(Y\).
	Note that 
	\(
	E \left[Z^{Y}_t\right] = B^{Y}_t - B^{X}_t + (x - h(x)) \star (\nu^{Y} -\nu^{X})_t
	\) for all \(t \in [0, \infty)\).
	Hence, we obtain a.s. for all \(t \in [0, \infty)\)
	\begin{align*}
	E \left[X_t + Z^Y_t |\sigma(X_s, s \in [0, \infty))\right] &= X_t + E \left[Z^Y_t\right]
	\\&\begin{cases}
	\geq X_t,&\textup{ if \eqref{eq: ineq moment} holds}, \\
	= X_t,&\textup{ if \eqref{eq: ineq moment} holds with equality}.
	\end{cases}
	\end{align*}
	Now, \(X \preceq_{(i)cx} Y\) follows from the conditional Jensen's inequality.
\end{proof}

\section{Generalizations and Examples}\label{sec: appl}
In this section we discuss generalizations of our conditions to semimartingales with conditionally independent increments (called \(\mathscr{H}\)-SIIs in the following). 
Moreover, we give examples.
\subsection{Comparison of \(\mathscr{H}\)-SIIs}\label{Comparison of Semimartingales with Conditionally Independent Increments}
Let \((\Omega, \mathscr{F})\) be a Polish space equipped with its topological Borel \(\sigma\)-field and let \(\p\) be a probability measure on \((\Omega, \mathscr{F})\).
We choose two not necessarily right-continuous filtrations \((\mathscr{F}^1_t)_{t \in [0, \infty)}\) and \((\mathscr{F}^2_t)_{t \in [0, \infty)}\) on \((\Omega, \mathscr{F})\) consisting of countably generated \(\sigma\)-fields. 
Moreover, let \(\mathscr{H}^1\) and \(\mathscr{H}^2\) be two countably generated sub-\(\sigma\)-fields of \(\mathscr{F}\).
For example, \(\mathscr{H}^i\) could be \(\sigma(Z_s, s \in [0, \infty))\) where \(Z\) is right- or left-continuous and takes values in a Polish space. 
For \(i =1, 2\) we define the filtrations \(\G^i = (\mathscr{G}^i_t)_{t \in [0, \infty)}\) 
on \((\Omega, \mathscr{F})\)~by 
\(
\mathscr{G}^i_t \triangleq \mathscr{G}^{i, o}_{t+},\) where
\(\mathscr{G}^{i, o}_t \triangleq \mathscr{F}^i_t \vee \mathscr{H}^i. 
\)

A process \(B \in \mathscr{V}^d\) (or \(\in \mathscr{V}^{d \times d}\)) is said to have an \(\mathscr{H}^i\)-measurable version, if for all \(t \in [0, \infty)\) the random variable \(B_t\) has an \(\mathscr{H}^i\)-measurable version.
We say that a compensator \(\nu\) of a random measure of jumps has an \(\mathscr{H}^i\)-measurable version, if for all \(t \in [0, \infty)\) and all Borel functions \(g \colon \mathbb{R}^d \to \mathbb{R}\) such that \(|g(x)| \leq 1 \wedge |x|^2\) the random variable \(g \star \nu_t\) has an \(\mathscr{H}^i\)-measurable version.


Let \(X\) be an \(\mathbb{R}^d\)-valued \(\G^1\)-semimartingale with characteristics \((B^X, C^X, \nu^X)\), and \(Y\) be an \(\mathbb{R}^d\)-valued \(\G^2\)-semimartingale with characteristics \((B^Y, C^Y, \nu^Y)\), such that \((B^X, C^X, \nu^X)\) has an \(\mathscr{H}^1\)-measurable version and that \((B^Y, C^Y, \nu^Y)\) has an \(\mathscr{H}^2\)-measurable version.

In this setting there exists a regular conditional probability \(\p(\cdot|\mathscr{H}^i)(\cdot)\) from \((\Omega, \mathscr{H}^i)\) to \((\Omega, \mathscr{F})\), see, e.g., \cite[Theorem 9.2.1]{stroock2010}. 
The following  is the main observation to transfer our conditions for PIIs to \(\mathscr{H}\)-SIIs.
\begin{lemma}\label{SII lemma}
	There exists a null set \(N \in \mathscr{F}\) such that for all \(\omega \in \complement N\) the process
	\(X\) is a \(P(\cdot|\mathscr{H}^1)(\omega)\)-PII with characteristics \((B^X(\omega), C^X(\omega), \nu^X(\omega))\) and \(Y\) is a \(P(\cdot|\mathscr{H}^2)(\omega)\)-PII with characteristics \((B^Y(\omega),\) \(C^Y(\omega), \nu^Y(\omega))\).
\end{lemma}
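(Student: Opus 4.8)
The plan is to reduce the statement to the well-known characterization of PIIs via characteristic functions, as recorded in \cite[Theorem II.5.2, Theorem II.5.10]{JS}, applied fibrewise under the regular conditional probability. Since $X$ is a $\G^1$-semimartingale whose characteristics $(B^X, C^X, \nu^X)$ have an $\mathscr{H}^1$-measurable version, and the semimartingale property together with the characteristics can be encoded in the martingale property of countably many processes (for a countable collection of test functions $g \in \mathscr{C}^+$ and, say, truncated linear and quadratic functionals built from a countable dense family), the first step is to express ``$X$ is a $\G^1$-semimartingale with characteristics $(B^X, C^X, \nu^X)$'' as the statement that a fixed countable family of processes $(M^k)_{k \in \mathbb{N}}$ is a collection of $\G^1$-local martingales under $\p$. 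By localizing along a common sequence of stopping times one may even assume these are honest martingales on each localized piece; it therefore suffices to treat bounded martingale-type identities of the form $E[\, (M^k_t - M^k_s)\, \mathbf{1}_F\,] = 0$ for $F$ ranging over a countable generating algebra of $\mathscr{F}^1_s$.

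Next I would invoke the disintegration. For each such identity, $E[\,(M^k_t - M^k_s)\mathbf{1}_F\,] = \int E^{\p(\cdot|\mathscr{H}^1)(\omega)}[\,(M^k_t - M^k_s)\mathbf{1}_F\,]\, \p(\dd\omega)$, and the same holds with $F$ replaced by $F \cap H$ for $H \in \mathscr{H}^1$; since $(M^k)$ is a $\G^1 = (\mathscr{F}^i_{\cdot} \vee \mathscr{H}^i)_+$-martingale, the conditional expectation on the left vanishes after conditioning on $\mathscr{F}^1_s \vee \mathscr{H}^1$, which gives that $E^{\p(\cdot|\mathscr{H}^1)(\omega)}[\,(M^k_t - M^k_s)\mathbf{1}_F\,] = 0$ for $\p$-a.a.\ $\omega$. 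Because there are only countably many $k$, countably many rationals $s < t$, and countably many sets $F$ in the chosen algebra, the exceptional null sets may be amalgamated into a single $N^1 \in \mathscr{F}$. Monotone-class and right-continuity arguments (using that the filtrations consist of countably generated $\sigma$-fields and the processes are \cadlag, hence determined by rational times) then upgrade this to: off $N^1$, each $M^k$ is a $(\mathscr{F}^1_\cdot)$-local martingale under $\p(\cdot|\mathscr{H}^1)(\omega)$. Re-reading the encoding backwards via \cite[Theorem II.1.33, Theorem II.5.10]{JS}, this says precisely that under $\p(\cdot|\mathscr{H}^1)(\omega)$ the process $X$ is a semimartingale with deterministic characteristics $(B^X(\omega), C^X(\omega), \nu^X(\omega))$ — the ``deterministic'' being automatic since the $\mathscr{H}^1$-measurable versions are constant under $\p(\cdot|\mathscr{H}^1)(\omega)$ — hence a PII by \cite[Theorem II.5.2]{JS}. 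The identical argument with $\mathscr{H}^2$, $\G^2$, $Y$ yields a null set $N^2$, and one takes $N \triangleq N^1 \cup N^2$.

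The step I expect to be the main obstacle is making the passage from ``$\G^1$-(local) martingale under $\p$'' to ``$(\mathscr{F}^1_\cdot)$-(local) martingale under $\p(\cdot|\mathscr{H}^1)(\omega)$ for a.e.\ $\omega$'' fully rigorous, in particular: (a) choosing a \emph{countable} determining family of test processes whose joint local-martingale property is genuinely equivalent to the characteristics statement (here one needs the filtrations and $\sigma$-fields to be countably generated, which is assumed, and one uses a countable $\mathscr{C}^+$ as in \cite[II.2.20]{JS} together with countably many truncated coordinate functionals); (b) handling the localization uniformly, so that a single sequence of stopping times works under $\p$ and descends to the conditional laws — this is where one may instead argue directly with the $\sigma$-finite/$\sigma$-integrable structure and supermartingale-type bounds rather than honest martingales; and (c) verifying that the conditional characteristics one reads off coincide $\p(\cdot|\mathscr{H}^1)(\omega)$-a.s.\ with $(B^X(\omega), C^X(\omega), \nu^X(\omega))$, which is where the $\mathscr{H}^1$-measurable-version hypothesis is used: an $\mathscr{H}^1$-measurable random variable is $\p(\cdot|\mathscr{H}^1)(\omega)$-a.s.\ equal to its value at $\omega$, and one applies this to $B^X_t$, $C^X_t$ and $g \star \nu^X_t$ along a countable collection of $t$'s and $g$'s, again amalgamating null sets. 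None of these obstacles is deep, but the bookkeeping of the exceptional sets and the equivalence of the encodings is the substance of the proof.
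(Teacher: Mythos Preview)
Your proposal is correct and is, in essence, an explicit unpacking of what the paper defers to the literature: the paper's own proof consists of a single sentence invoking \cite[Lemma II.6.13, Corollary II.6.15]{JS} together with \cite[Lemma A.1]{C15b}, and those results carry out precisely the program you sketch --- encoding the characteristics via a countable family of local-martingale conditions, disintegrating the martingale identities under the regular conditional probability, amalgamating the countably many exceptional null sets, and then reading off that the $\mathscr{H}^i$-measurable characteristics become deterministic fibrewise so that \cite[Theorem II.5.2]{JS} identifies the conditional law as that of a PII. Your identification of the three technical pressure points (countable determining family, uniform localization, matching the fibrewise characteristics) is accurate; these are exactly what the cited lemmas handle.
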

\begin{proof}
	The claim follows from \cite[Lemma II.6.13, Corollary II.6.15]{JS} and \cite[Lemma A.1]{C15b}. 
\end{proof}
In view of this lemma it is strait forward to transfer our conditions for PIIs to \(\mathscr{H}\)-SIIs. More precisely, assume that for all \(\omega \in \complement N\), where \(N\) is as in the previous lemma, the characteristics \((B^X(\omega), C^X(\omega), \nu^X(\omega))\) and \((B^Y(\omega), C^Y(\omega), \nu^Y(\omega))\) satisfy our conditions for \(\preceq_{\bullet}\). Then, with abuse of notation, for all \(f \in \mathcal{F}_{\bullet}\)
\[
\int f(X(\omega^*)) P(\dd \omega^*|\mathscr{H}^1) (\omega) \leq \int f(Y(\omega^*)) P(\dd \omega^*|\mathscr{H}^2)(\omega).
\]
Since \(N\) is a null set, taking expectation yields \(X \preceq_\bullet Y\).

We omit to state explicit conditions as these are similar to our previous conditions with an additional almost surely. However, we give examples in the following section.

\subsection{Examples}\label{Comparison of Levy processes}
In this section we provide examples. Here, we focus on processes which are not discussed in \cite{bergenthum2007,bergenthum2007b}.
We start with an example  of a PII with infinite variation and fixed times of discontinuity.

\begin{example}[Time-inhomogeneous CGMY L\'evy processes with fixed times of discontinuity]\label{levy example}
	For \(n \in \mathbb{N}\) fix \(0 < t_1 < t_2 < ... < t_n < \infty\). Moreover, for \(k =1 , 2\) and \(i = 1, ..., n\) let \(F^{k, i}\) be probability measures on \((\mathbb{R}, \mathscr{B}(\mathbb{R}))\) such that \(F^{k, i}(\{0\}) = 0\) and for all \(x \in \mathbb{R}\) we have 
	\(
	F^{2, i} ((- \infty, x]) \leq F^{1, i} ((-\infty, x]).
	\)
	Let \(X^1\) and \(X^2\) be real-valued PIIs such that
	\(C^{X^1} = C^{X^2} = 0\) and 
	\begin{align*}
	\nu^{X^1}(\dd t \times \dd x) &= \frac{C_t}{|x|^{1+ Y_t}} e^{- M^1_t x} \1_{\{x > 0\}} \dd t \dd x \\&\hspace{1cm}+ \frac{C_t}{|x|^{1 + Y_t}} e^{G^1_t x} \1_{\{x < 0\}} \dd t \dd x + \sum_{i = 1}^n \varepsilon_{t_i}(\dd t) F^{1, i}(\dd x),\\ 
	\nu^{X^2}(\dd t \times \dd x) &= \frac{C_t}{|x|^{1 + Y_t}} e^{- M^2_t x} \1_{\{x > 0\}} \dd t\dd x \\&\hspace{1cm}+ \frac{C_t}{|x|^{1 + Y_t}} e^{G^2_t x}\1_{\{x < 0\}} \dd t\dd x + \sum_{i = 1}^n \varepsilon_{t_i}(\dd t) F^{2, i}(\dd x).
	\end{align*}
	Here, 
	\(C, G^1, G^2, M^1, M^2 \colon [0, \infty) \to (0, \infty)\) are continuous and bounded away from \(0\) on compact sets, and \(Y \colon [0, \infty) \to (- \infty, 2)\) is continuous and bounded away from \(2\) on compact sets. 
	If \(\nu= \nu^X + \nu^Y\), then the integrability condition \eqref{main inte cond} holds.
	Moreover, \eqref{K cond main} is satisfied if 
	\(
	M^2_t \leq M^1_t\) and \(G^1_t \leq G^2_t\) for all \(t \in [0, \infty)\) except of a \(\dd t\)-null set.
	If we choose \(B^{X^1}\) and \(B^{X^2}\) according to \eqref{K cond coro}, the Propositions \ref{prop: fts qlc ordnen} and \ref{theo: M1}
	imply~\(X^1\preceq_{pst} X^2\).
\end{example}

Next, we compare \(\mathscr{H}\)-SIIs, using the notation from the previous section.
\begin{example}[Comparison of Time-Changed L\'evy Processes]
	For \(i = 1, 2\), let \(Z^i\) be a \([0, \infty)\)-valued \cadlag processes and let \(V^i\) be a real-valued L\'evy process (w.r.t. its natural filtration) with L\'evy-Khinchine triplet \((b^{V^i}, 0, F^{V^i})\). 
	Assume that the process \(Z^i\) is independent of \(V^i\) 
	and set
	\(
	X^i_\cdot \triangleq V^i_{\int_0^\cdot Z^i_s \dd s},
	\)
	\(
	\mathscr{F}^i_t \triangleq \sigma(X^i_s, Z^i_s, s \in [0, t])\) and \(\mathscr{H}^i  \triangleq \sigma(Z^i_t, t \in [0, \infty)).\)
	The next lemma follows from \textup{\cite[Lemma 2.4]{KMK10}}.
	\begin{lemma}
		For \(i = 1,2\) the process \(X^i\) is a \(\G^i\)-semimartingale and its \(\G^i\)-semimartingale characteristics are given by
		\begin{align*}
		B^{X^i} = b^{V^i}\int_0^\cdot Z^i_s \dd s,\quad C^{X^i} = 0,\quad \nu^{X^i}(\dd t \times \dd x) = Z^i_{t-} \dd t  F^{V^i}(\dd x).
		\end{align*}
	\end{lemma}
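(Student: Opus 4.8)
The plan is to apply the cited \cite[Lemma 2.4]{KMK10} on time changes of L\'evy processes conditionally on the time change, and then to ``undo'' the conditioning via the independence of $Z^i$ and $V^i$. First I would fix $i \in \{1,2\}$ and work on the filtered space $(\Omega, \mathscr{F}, \G^i, \p)$ with $\G^i$ as defined in Section~\ref{Comparison of Semimartingales with Conditionally Independent Increments}. Since $Z^i$ is $\mathscr{H}^i$-measurable and $\mathscr{H}^i \subseteq \mathscr{G}^i_0$, the (continuous, increasing, finite-variation) time change $T^i_t \triangleq \int_0^t Z^i_s \,\dd s$ is a $\G^i$-adapted, continuous increasing process with $\G^i$-measurable (indeed $\mathscr{H}^i$-measurable) values. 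Because $Z^i$ is independent of $V^i$ and $V^i$ is a L\'evy process, conditionally on $\mathscr{H}^i$ the process $V^i$ is still a L\'evy process with the \emph{same} triplet $(b^{V^i}, 0, F^{V^i})$ (independence is preserved under the regular conditional probability, cf. the construction recalled before Lemma~\ref{SII lemma}). Hence $X^i = V^i \circ T^i$ is, conditionally on $\mathscr{H}^i$, a L\'evy process subordinated by the deterministic (given $\mathscr{H}^i$) continuous time change $T^i$, and \cite[Lemma 2.4]{KMK10} identifies its characteristics.

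Concretely, \cite[Lemma 2.4]{KMK10} gives that the $\G^i$-semimartingale characteristics of a time change $V \circ T$ of a L\'evy process $V$ with triplet $(b, c, F)$ by a continuous adapted increasing process $T$ are $(b\,T, c\,T, \dd T_t\, F(\dd x))$. In our situation $c = 0$, $b = b^{V^i}$, $F = F^{V^i}$ and $\dd T^i_t = Z^i_{t} \,\dd t = Z^i_{t-}\,\dd t$ (the two agree up to a $\dd t$-null set since $Z^i$ has at most countably many jumps, and this does not affect the compensator). Substituting yields
\begin{align*}
B^{X^i} = b^{V^i}\int_0^\cdot Z^i_s \,\dd s, \qquad C^{X^i} = 0, \qquad \nu^{X^i}(\dd t \times \dd x) = Z^i_{t-}\,\dd t\, F^{V^i}(\dd x),
\end{align*}
which is the assertion. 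The only point requiring a little care is verifying the hypotheses of \cite[Lemma 2.4]{KMK10}: that $X^i$ is indeed a $\G^i$-semimartingale (this follows since $V^i$ is a semimartingale in its own filtration, the time change $T^i$ is $\G^i$-adapted, continuous and increasing, and $V^i$ remains a semimartingale w.r.t. the enlarged filtration $\G^i$ by the independence of $Z^i$ and $V^i$, e.g. via Jacod's criterion / the fact that $\mathscr{H}^i$ is independent of $V^i$), and that $T^i$ maps into the index set properly, i.e. $T^i_t < \infty$ for all $t$, which holds because $Z^i$ is $[0,\infty)$-valued and \cadlag, hence locally bounded.

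I expect the main (minor) obstacle to be the filtration bookkeeping: \cite[Lemma 2.4]{KMK10} is presumably stated for a time change adapted to a fixed filtration, so one must be careful that (a) $X^i$ and $T^i$ are adapted to $\G^i = (\mathscr{G}^{i,o}_{t+})$ and (b) the semimartingale property and the characteristics computed w.r.t. $\G^i$ coincide with those obtained by first conditioning on $\mathscr{H}^i$ and then applying the time-change result path-by-path for a deterministic continuous time change. Both are handled by the independence of $Z^i$ and $V^i$ together with the standard fact (already invoked around Lemma~\ref{SII lemma} via \cite[Lemma II.6.13, Corollary II.6.15]{JS}, \cite[Lemma A.1]{C15b}) that characteristics are preserved under passage to the regular conditional probability $\p(\cdot\,|\,\mathscr{H}^i)$; since the right-hand sides above are $\mathscr{H}^1$- resp. $\mathscr{H}^2$-measurable, this also shows $(B^{X^i},C^{X^i},\nu^{X^i})$ admits an $\mathscr{H}^i$-measurable version, placing the example squarely in the framework of Section~\ref{Comparison of Semimartingales with Conditionally Independent Increments}.
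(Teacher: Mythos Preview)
Your proposal is correct and takes essentially the same approach as the paper: both proofs simply invoke \cite[Lemma 2.4]{KMK10} on time-changed L\'evy processes, with the independence of $Z^i$ and $V^i$ ensuring the hypotheses are met. The paper's proof is a one-line citation, whereas you spell out the filtration bookkeeping and the passage $Z^i_t\,\dd t = Z^i_{t-}\,\dd t$; none of this extra detail is needed, but it is accurate.
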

	Using Lemma \ref{SII lemma} and the Theorems \ref{theo: Mnew} and \ref{theo: C}, we obtain the following
	\begin{corollary}\begin{enumerate}
			\item[\textup{(a)}]
			Suppose that  \(\int|h(x)| F^{V^i}(\dd x) < \infty\), that
			\begin{align*}
			F^{V^2}((- \infty, x]) &\leq F^{V^1}((- \infty, x])\ x < 0,\\ F^{V^1}([x, \infty)) &\leq F^{V^2}([x, \infty)),\ x > 0,
			\end{align*}
			and that a.s. for all \(t \in [0, \infty)\) 
			\[\hspace{-1cm}\left(b^{V^1} - \int h(x) F^{V^1}(\dd x)\right) \int_0^t Z^1_s \dd s \leq \left(b^{V^2} - \int h(x) F^{V^2}(\dd x)\right) \int_0^t Z^2_s \dd s.\]
			Then \(X \preceq_{pst} Y\).
			\item[\textup{(b)}]
			If a.s. \(Z^1_t \leq Z^2_t\) for all \(t \in [0, \infty)\), \(F^{V^2} - F^{V^1}\) is a non-negative measure,
			\(
			\int|x - h(x)|  F^{V^i} (\dd x) < \infty,
			\)
			and 
			\begin{align}\label{eq:extc}
			b^{V^1} \leq\int (x- h(x)) F^{V^1}(\dd x),\quad b^{V^2} \geq \int(x - h(x)) F^{V^2}(\dd x), 
			\end{align}
			then \(X^1 \preceq_{icx} X^2\). If, additionally, the inequalities in \eqref{eq:extc} are equalities, then \(X^1 \preceq_{cx} X^2\).
		\end{enumerate}
	\end{corollary}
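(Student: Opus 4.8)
The plan is to reduce the comparison of the $\mathscr{H}$-SIIs $X^1$ and $X^2$ to a \emph{conditional} comparison of genuine PIIs and then to invoke Theorem~\ref{theo: Mnew} for part~(a) and Theorem~\ref{theo: C} for part~(b). First I would observe that the $\G^i$-characteristics $B^{X^i}=b^{V^i}\int_0^\cdot Z^i_s\,\dd s$, $C^{X^i}=0$, $\nu^{X^i}(\dd t\times\dd x)=Z^i_{t-}\,\dd t\,F^{V^i}(\dd x)$ are built from the $\mathscr{H}^i$-measurable process $Z^i$ and the fixed deterministic triplet of $V^i$, hence possess $\mathscr{H}^i$-measurable versions. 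Lemma~\ref{SII lemma} then yields a $P$-null set $N$ such that for every $\omega\in\complement N$ the process $X^i$ is, under $P(\cdot\,|\,\mathscr{H}^i)(\omega)$, a PII with the deterministic characteristics $\bigl(b^{V^i}\int_0^\cdot Z^i_s(\omega)\,\dd s,\ 0,\ Z^i_{s-}(\omega)\,\dd s\,F^{V^i}(\dd x)\bigr)$; these correspond to quasi-left continuous PIIs (continuous first characteristic and no fixed times of discontinuity), so the two quoted theorems apply. After enlarging $N$, I may assume that on $\complement N$ all the almost sure hypotheses hold pathwise (the drift inequality in~(a); both $Z^1\le Z^2$ and the drift/moment inequalities in~(b)) and that each $Z^i_\cdot(\omega)$ has \cadlag paths, hence is locally bounded. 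By the integration argument spelled out in Section~\ref{Comparison of Semimartingales with Conditionally Independent Increments} it then suffices to check, for each such $\omega$, that the two deterministic triplets satisfy the hypotheses of the relevant PII theorem; integrating the resulting conditional inequality against $P$ (tower property) gives the claim.

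For part~(a) I would verify the hypotheses of Theorem~\ref{theo: Mnew} for the conditional triplets. Both have vanishing Gaussian part, so the common-$C$ requirement holds with $C=0$. Taking $\dd A_t=\dd t$ gives $K^{X^i}(t,\dd x)=Z^i_{t-}(\omega)\,F^{V^i}(\dd x)$, so the finite-variation condition~\eqref{fv} reads $\bigl(\int|h(x)|\,F^{V^i}(\dd x)\bigr)\int_0^t Z^i_{s-}(\omega)\,\dd s<\infty$, which holds by the assumption $\int|h(x)|\,F^{V^i}(\dd x)<\infty$ and local boundedness of $Z^i$. The jump condition~\eqref{main coupl st} amounts to the tail inequalities between $Z^2_{t-}(\omega)F^{V^2}$ and $Z^1_{t-}(\omega)F^{V^1}$; comparing the It\^o maps~\eqref{ito map} of the two kernels, these reduce (with $Z^i\ge0$) to the hypothesized orderings $F^{V^2}((-\infty,x])\le F^{V^1}((-\infty,x])$ for $x<0$ and $F^{V^1}([x,\infty))\le F^{V^2}([x,\infty))$ for $x>0$. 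Finally, substituting $B^{X^i}(\omega)_t=b^{V^i}\int_0^t Z^i_s(\omega)\,\dd s$ and $h(x)\star\nu^{X^i}(\omega)_t=\bigl(\int h(x)\,F^{V^i}(\dd x)\bigr)\int_0^t Z^i_s(\omega)\,\dd s$ turns~\eqref{drift cond new} into exactly the assumed pathwise inequality $\bigl(b^{V^1}-\int h(x)\,F^{V^1}(\dd x)\bigr)\int_0^t Z^1_s(\omega)\,\dd s\le\bigl(b^{V^2}-\int h(x)\,F^{V^2}(\dd x)\bigr)\int_0^t Z^2_s(\omega)\,\dd s$. Theorem~\ref{theo: Mnew} then gives $X^1\preceq_{pst}X^2$ under $P(\cdot\,|\,\mathscr{H}^i)(\omega)$ for every $\omega\in\complement N$, and integrating out yields $X^1\preceq_{pst}X^2$.

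For part~(b) I would verify the hypotheses of Theorem~\ref{theo: C}. That $C^{X^2}_t-C^{X^1}_t$ is non-negative definite is trivial. Non-negativity of $\nu^{X^2}(\omega)-\nu^{X^1}(\omega)$ follows from the splitting $Z^2_{s-}(\omega)F^{V^2}-Z^1_{s-}(\omega)F^{V^1}=Z^2_{s-}(\omega)(F^{V^2}-F^{V^1})+(Z^2_{s-}(\omega)-Z^1_{s-}(\omega))F^{V^1}$, both summands being non-negative measures by $F^{V^2}-F^{V^1}\ge0$ and $Z^1\le Z^2$. The moment condition $|x-h(x)|\star\nu^{X^i}(\omega)_t<\infty$ reads $\bigl(\int|x-h(x)|\,F^{V^i}(\dd x)\bigr)\int_0^t Z^i_{s-}(\omega)\,\dd s<\infty$, true by hypothesis. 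The remaining condition~\eqref{eq: ineq moment}, which for the conditional triplets becomes $E[V^1_1]\int_0^t Z^1_s(\omega)\,\dd s\le E[V^2_1]\int_0^t Z^2_s(\omega)\,\dd s$ with $E[V^i_1]=b^{V^i}+\int(x-h(x))\,F^{V^i}(\dd x)$, is ensured by~\eqref{eq:extc} together with $0\le\int_0^t Z^1_s(\omega)\,\dd s\le\int_0^t Z^2_s(\omega)\,\dd s$, and it holds with equality when~\eqref{eq:extc} does. Thus Theorem~\ref{theo: C} gives $X^1\preceq_{icx}X^2$ conditionally (resp.\ $X^1\preceq_{cx}X^2$ in the equality case), and integrating out over $\mathscr{H}^i$ finishes the proof.

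I expect the main obstacle to be organisational rather than conceptual: ensuring that the single null set $N$ from Lemma~\ref{SII lemma} can be enlarged to absorb both disintegrations and all the almost sure pathwise hypotheses simultaneously, so that for $P$-a.e.\ $\omega$ the conditional triplets genuinely satisfy the assumptions of Theorems~\ref{theo: Mnew} and~\ref{theo: C}. The only step in~(a) that is not pure bookkeeping is the passage from the assumed ordering of $F^{V^1}$ and $F^{V^2}$ to the tail ordering~\eqref{main coupl st} of the scaled kernels, which hinges on the monotone dependence of the It\^o map~\eqref{ito map} on its arguments; once that is settled, everything else is a direct substitution into the hypotheses of the two quoted theorems.
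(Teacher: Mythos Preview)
Your overall strategy---condition via Lemma~\ref{SII lemma} and then apply Theorems~\ref{theo: Mnew} and~\ref{theo: C} to the frozen triplets---is exactly what the paper intends. The gap is in part~(a), at the sentence ``comparing the It\^o maps~\eqref{ito map} of the two kernels, these reduce (with \(Z^i\ge0\)) to the hypothesized orderings''. Condition~\eqref{main coupl st} for the conditional kernels reads
\[
Z^2_{t-}(\omega)\,F^{V^2}((-\infty,x])\le Z^1_{t-}(\omega)\,F^{V^1}((-\infty,x]),\qquad Z^1_{t-}(\omega)\,F^{V^1}([x,\infty))\le Z^2_{t-}(\omega)\,F^{V^2}([x,\infty)),
\]
and since part~(a) imposes \emph{no} relation between \(Z^1\) and \(Z^2\) beyond the drift inequality, this does not follow from the unscaled tail ordering of the \(F^{V^i}\). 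In It\^o-map language: the map of \(cF\) at \(x>0\) equals the map of \(F\) at \(cx\), so what you must compare is \(\rho_{F^{V^1}}(Z^1_{t-}\,\cdot\,)\) with \(\rho_{F^{V^2}}(Z^2_{t-}\,\cdot\,)\), not \(\rho_{F^{V^1}}\) with \(\rho_{F^{V^2}}\).

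This is not a bookkeeping slip; the assertion itself fails without a link between \(Z^1\) and \(Z^2\). Take \(V^1=V^2\) to be the standard Poisson process (so \(F^{V^i}=\delta_1\) and \(b^{V^i}=h(1)\)) and set \(Z^1\equiv2\), \(Z^2\equiv1\). The tail hypotheses are trivial equalities, \(\int|h|\,\dd F^{V^i}<\infty\), and the drift condition reads \(0\le0\); yet \(X^1\) is a rate-\(2\) Poisson process while \(X^2\) is rate-\(1\), so \(X^1\not\preceq_{pst}X^2\). Your argument (and the paper's one-line proof) goes through cleanly once one adds a hypothesis such as \(Z^1=Z^2\): then the common factor cancels and~\eqref{main coupl st} does reduce to the assumed ordering of \(F^{V^1}\) and \(F^{V^2}\).
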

\end{example}
\begin{example}[Comparison of Integrated L\'evy Processes]
	For \(i = 1, 2\), let \(B^i\) be a \(d\)-dimensional Brownian motion (w.r.t. its natural filtration), and let \(\mu^i\) be a left-continuous \(\mathbb{R}^d\)-valued process and \(\sigma^i\) be a left-continuous \(\mathbb{R}^d \otimes \mathbb{R}^d\)-valued process, both of which are independent of \(B^i\). Moreover, we set 
	\(
	\mathscr{F}^i_t \triangleq \sigma(B^i_s, \mu^i_s, \sigma^i_s, s \in [0, t])\) and \(\mathscr{H}^i \triangleq \sigma(\mu^i_t, \sigma^i_t, t \in [0, \infty)).
	\)
	Note that, since \(B^i\) is independent of both \(\mu^i\) and \(\sigma^i\), it follows from \cite[Theorem II.68.2]{RW} and \cite[Theorem 15.5]{bauer2002wahrscheinlichkeitstheorie} that \(B^i\) is an \((\mathscr{F}_{t+}^i)_{t \in [0, \infty)}\)-Brownian motion. Thus, the process
	\[
	X^i \triangleq \int_0^\cdot \mu^i_s \dd s + \int_0^\cdot \sigma^i_s \dd B^i_s
	\]
	is well-defined as an \((\mathscr{F}^i_{t+})_{t \in [0, \infty)}\)-semimartingale.
	The following lemma follows from \textup{\cite[Lemma 2.3]{KMK10}}.
	\begin{lemma}
		For \(i = 1, 2\) the process \(X^i\) is a \(\mathsf{G}^i\)-semimartingale and its \(\G^i\)-semimartingale characteristics are given by 
		\[
		B^{X^i} = \int_0^\cdot \mu^i_s \dd s,\quad C^{X^i} = \int_0^\cdot \sigma^i_s (\sigma^i_s)^* \dd s, \quad \nu^{X^i} = 0,
		\]
		where \((\sigma^i)^*\) denotes the adjoint of \(\sigma^i\).
	\end{lemma}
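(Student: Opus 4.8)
The plan is to realize $X^i$ as a continuous $\G^i$-semimartingale whose canonical decomposition is exactly $X^i=\int_0^\cdot\mu^i_s\,\dd s+\int_0^\cdot\sigma^i_s\,\dd B^i_s$, and then to read off the triplet from \cite[Definition II.2.6]{JS}, using that a continuous semimartingale has no jumps.

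First I would record the filtration identity $\mathscr{F}^i_t\vee\mathscr{H}^i=\sigma(B^i_s,s\le t)\vee\mathscr{H}^i$, which holds because $\sigma(\mu^i_s,\sigma^i_s,s\le t)\subseteq\mathscr{H}^i$ by the very definition of $\mathscr{H}^i$. Hence $\G^i$ is the right-continuous initial enlargement of the Brownian filtration $(\sigma(B^i_s,s\le t))_{t\ge 0}$ by the $\sigma$-field $\mathscr{H}^i=\sigma(\mu^i_t,\sigma^i_t,t\in[0,\infty))$, which is independent of $\sigma(B^i_s,s\in[0,\infty))$ by hypothesis. The central step is then that $B^i$ remains a Brownian motion under $\G^i$; this is exactly what \cite[Lemma 2.3]{KMK10} provides, and it can also be checked directly by verifying that $B^i_t-B^i_s$ is independent of $\sigma(B^i_r,r\le s)\vee\mathscr{H}^i$ for $s<t$ — which follows from the independence of the increments of $B^i$ together with the independence of $\sigma(B^i)$ and $\mathscr{H}^i$ — and then passing to the right limit. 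I expect this to be the step requiring the most care, since the enlargement must not destroy the conditional independence of the increments; everything after it is essentially bookkeeping.

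Granting this, $\mu^i$ and $\sigma^i$ are left-continuous and, by definition of $\mathscr{H}^i$, each $\mu^i_t$ and $\sigma^i_t$ is $\mathscr{G}^i_0$-measurable, so both processes are $\G^i$-predictable; the integrals are then well-defined (this was already used in setting up $X^i$), $\int_0^\cdot\mu^i_s\,\dd s$ is a continuous $\G^i$-adapted process of finite variation, and $\int_0^\cdot\sigma^i_s\,\dd B^i_s$ is a continuous $\G^i$-local martingale. Thus $X^i$ is a continuous $\G^i$-semimartingale with continuous local martingale part $X^{i,c}=\int_0^\cdot\sigma^i_s\,\dd B^i_s$ and finite-variation part $\int_0^\cdot\mu^i_s\,\dd s$. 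Finally I would identify the characteristics: continuity of $X^i$ forces $\nu^{X^i}=0$; by \cite[Definition II.2.6]{JS} the second characteristic is $\langle X^{i,c}\rangle$, and a componentwise computation of the quadratic covariation of $\int_0^\cdot\sigma^i_s\,\dd B^i_s$ against the $d$-dimensional Brownian motion $B^i$ gives $C^{X^i}=\int_0^\cdot\sigma^i_s(\sigma^i_s)^*\,\dd s$; and since $\nu^{X^i}=0$, the first characteristic coincides with the finite-variation part, $B^{X^i}=X^i-X^{i,c}=\int_0^\cdot\mu^i_s\,\dd s$, independently of the truncation function $h$. This completes the identification.
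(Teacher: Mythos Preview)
Your proposal is correct and follows the same route as the paper, which simply cites \cite[Lemma 2.3]{KMK10}; you have identified that reference as the key step (that $B^i$ remains a Brownian motion under the enlarged filtration $\G^i$) and filled in the routine bookkeeping that the citation leaves implicit. The filtration identity $\mathscr{F}^i_t\vee\mathscr{H}^i=\sigma(B^i_s,s\le t)\vee\mathscr{H}^i$ and the subsequent identification of the characteristics are exactly what is needed.
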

	Obviously, the \(\G^i\)-semimartingale characteristics of \(X^i\) are \(\mathscr{H}^i\)-measurable.
	We deduce the following comparison result from Lemma \ref{SII lemma} and Theorem \ref{theo: C}.
	\begin{corollary}\label{coro:contsmcx}
		If a.s. 
		\(
		\mu^1_t \leq \mu^2_t\) and \(\sigma^2_t (\sigma^2_t)^* - \sigma^1_t (\sigma^1_t)^*\) is a non-negative definite matrix for all \(t \in [0, \infty)\),
		then \(X \preceq_{icx} Y\). If, additionally, a.s.
		\(
		\mu^1_t = \mu^2_t
		\) for all \(t \in [0, \infty)\),
		then \(X \preceq_{cx} Y\). 
	\end{corollary}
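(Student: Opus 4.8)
The plan is to apply Theorem~\ref{theo: C} pathwise under the regular conditional probabilities $P(\cdot|\mathscr{H}^1)(\omega)$ and $P(\cdot|\mathscr{H}^2)(\omega)$ and then to integrate out $\omega$, exactly along the conditioning-and-integrating scheme described right after Lemma~\ref{SII lemma}. First I would assemble the exceptional sets into one: let $N$ be the null set from Lemma~\ref{SII lemma}, and let $N_1, N_2 \in \mathscr{F}$ be the $P$-null sets off which, respectively, $\mu^1_t \leq \mu^2_t$ holds coordinatewise for all $t \in [0, \infty)$ and $\sigma^2_t(\sigma^2_t)^* - \sigma^1_t(\sigma^1_t)^*$ is non-negative definite for all $t \in [0, \infty)$. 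Set $N' \triangleq N \cup N_1 \cup N_2$, which is $P$-null. For each $\omega \in \complement N'$, Lemma~\ref{SII lemma} together with the preceding lemma tells us that $X^i$ is a $P(\cdot|\mathscr{H}^i)(\omega)$-PII with the deterministic characteristics $\bigl(\int_0^\cdot \mu^i_s(\omega)\,\dd s,\ \int_0^\cdot \sigma^i_s(\omega)(\sigma^i_s(\omega))^*\,\dd s,\ 0\bigr)$.

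Next I would check that, for each fixed $\omega \in \complement N'$, this pair of triplets satisfies the hypotheses of Theorem~\ref{theo: C}. The third characteristics vanish, so $\nu^{X^2}(\omega) - \nu^{X^1}(\omega) \equiv 0$ is trivially a non-negative measure and $|x - h(x)| \star \nu^{X^i}(\omega)_t = 0 < \infty$ for all $t$. Integrating the coordinatewise inequality $\mu^1_s(\omega) \leq \mu^2_s(\omega)$ in $s$ yields $\int_0^t \mu^1_s(\omega)\,\dd s \leq \int_0^t \mu^2_s(\omega)\,\dd s$ for all $t$, which is precisely \eqref{eq: ineq moment} since the jump parts are zero. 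Finally, for any $v \in \mathbb{R}^d$ and $t \in [0, \infty)$, interchanging the Lebesgue integral with the bilinear form gives $v^*\bigl(C^{X^2}_t(\omega) - C^{X^1}_t(\omega)\bigr)v = \int_0^t v^*\bigl(\sigma^2_s(\sigma^2_s)^* - \sigma^1_s(\sigma^1_s)^*\bigr)(\omega)\,v\,\dd s \geq 0$, so $C^{X^2}_t(\omega) - C^{X^1}_t(\omega)$ is non-negative definite. Hence Theorem~\ref{theo: C} applies and yields $X^1 \preceq_{icx} X^2$ under $P(\cdot|\mathscr{H}^1)(\omega)$ versus $P(\cdot|\mathscr{H}^2)(\omega)$; that is, $\int f(X^1(\omega^*))\,P(\dd\omega^*|\mathscr{H}^1)(\omega) \leq \int f(X^2(\omega^*))\,P(\dd\omega^*|\mathscr{H}^2)(\omega)$ for all increasing convex $f$ (of the relevant finitely many coordinates) and all $\omega \in \complement N'$.

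To conclude I would integrate this inequality over $\Omega$ against $P$. Since $N'$ is $P$-null and, by the defining property of the regular conditional probabilities together with the $\mathscr{H}^i$-measurability of the characteristics, $E[f(X^i)] = \int \bigl(\int f(X^i(\omega^*))\,P(\dd\omega^*|\mathscr{H}^i)(\omega)\bigr)\,P(\dd\omega)$, this gives $E[f(X^1)] \leq E[f(X^2)]$, i.e. $X^1 \preceq_{icx} X^2$. For the convex order, if in addition $\mu^1_t = \mu^2_t$ holds a.s. for all $t$, then on a co-null set $\int_0^t \mu^1_s(\omega)\,\dd s = \int_0^t \mu^2_s(\omega)\,\dd s$ for all $t$, so \eqref{eq: ineq moment} holds with equality $\omega$-wise; the second assertion of Theorem~\ref{theo: C} and the same integration step then give $X^1 \preceq_{cx} X^2$.

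I do not expect a genuine obstacle: the argument is a direct instance of the transfer recipe following Lemma~\ref{SII lemma}, and Theorem~\ref{theo: C} is tailor-made for the situation where the L\'evy measures are trivial. The only points requiring a little care are collecting a single $P$-null exceptional set out of the null set of Lemma~\ref{SII lemma} and the two almost-sure hypotheses, and the elementary fact that the time-integral of a non-negative-definite-matrix-valued function is again non-negative definite; both are routine.
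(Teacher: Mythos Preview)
Your proposal is correct and follows essentially the same approach as the paper, which simply states that the corollary is deduced from Lemma~\ref{SII lemma} and Theorem~\ref{theo: C} via the conditioning-and-integrating scheme described right after Lemma~\ref{SII lemma}. Your write-up fills in precisely the details the paper leaves implicit.
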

	\begin{remark}\label{rem:SCII cont}
		Results in the spirit of Corollary \ref{coro:contsmcx} were given by Hajek~\cite{Hajek1985} using a coupling technique different from ours.
		More precisely, in the one-dimensional case, Hajek shows the first part of Corollary \ref{coro:contsmcx} in the case where \(\mu^2\) and \(\sigma^2\) are constant and \(\mu^1\) and \(\sigma^1\) are not necessarily independent of \(B^1\).
	\end{remark}
\end{example}

\bibliographystyle{
chicago}
\bibliography{References}

\end{document}